\numberwithin{equation}{section}
\DeclareMathOperator{\frat}{Frat}
\newcommand{\nn}{\mathrel{\unlhd}}
\newcommand{\gen}[1]{\left\langle#1\right\rangle}
 \DeclareMathOperator{\perm}{Sym}
\DeclareMathOperator{\stab}{Stab}
\DeclareMathOperator{\gl}{GL} 
\DeclareMathOperator{\pg}{PG}
\DeclareMathOperator{\End}{End}
\newcommand{\rg}{\Gamma}
\renewcommand{\emptyset}{\varnothing}
\DeclareMathOperator{\psl}{PSL}
\newcommand{\st}{such that }
\newcommand{\ifa}{if and only if }
\newtheorem{thm}{Theorem}
\newtheorem{cor}[thm]{Corollary}
 \newtheorem{lemma}[thm]{Lemma}
\newtheorem{prop}[thm]{Proposition} \newtheorem{rem}[thm]{Remark}
 \newtheorem{con}[]{Conjecture}
\begin{document}
	\bibliographystyle{amsplain}
\title[Forbidden subgraphs in generating graphs]{Forbidden subgraphs in
	generating graphs\\ of finite groups}

\author{Andrea Lucchini}
\address{Andrea Lucchini\\ Universit\`a degli Studi di Padova\\  Dipartimento di Matematica \lq\lq Tullio Levi-Civita\rq\rq\\ Via Trieste 63, 35121 Padova, Italy\\email: lucchini@math.unipd.it}
\author{Daniele Nemmi}
\address{Daniele Nemmi\\ Universit\`a degli Studi di Padova\\  Dipartimento di Matematica \lq\lq Tullio Levi-Civita\rq\rq\\ Via Trieste 63, 35121
	Padova, Italy\\email: daniele.nemmi@phd.unipd.it}

	\begin{abstract} Let $G$ be a $2$-generated group. The generating graph $\Gamma(G)$ is the graph whose vertices
	are the elements of $G$ and where two vertices $g_1$ and $g_2$ are adjacent if $G = \langle g_1, g_2 \rangle.$ This graph
	encodes the combinatorial structure of the distribution of generating pairs across $G.$ In this
	paper we study 
	some graph theoretic properties of $\Gamma(G)$, with particular emphasis on those properties that can be formulated in terms of forbidden induced subgraphs. In particular we investigate when the generating graph $\Gamma(G)$ is a cograph (giving a complete description when $G$ is soluble) and when it is perfect (giving a complete description when $G$ is nilpotent and proving, among the others, that $\Gamma(S_n)$ and $\Gamma(A_n)$ are perfect if and only if $n\leq 4$). Finally we prove that for a finite group $G$, the properties that $\Gamma(G)$ is split, chordal or $C_4$-free are equivalent.
	\end{abstract}

\maketitle

\section{Introduction}

If a finite group $G$  can be generated by $d$ elements, the question of which sets of $d$ elements of $G$ generate $G$ is nontrivial. The simplest interesting case is when $G$ is 2-generated. One tool developed to study generators of 2-generated finite groups is the generating graph $\Gamma(G)$ of $G.$ This is the graph which has the elements of $G$ as vertices and an edge between two elements $g_1$ and $g_2$ if $G$ is generated by $g_1$ and $g_2.$ Note that the generating graph may be defined for any group, but it only has edges if $G$ is $2$-generated.

\  

Several strong structural results about $\Gamma(G)$ are known in the case where $G$ is simple, and this reflects the rich group theoretic structure of these groups. For example, if $G$ is a nonabelian simple group, then the only isolated vertex of $\Gamma(G)$ is the identity \cite{gk} and the graph $\Delta(G)$ obtained by removing the isolated vertex is connected with diameter two \cite{bgk} and, if $|G|$ is sufficiently large, admits a Hamiltonian cycle \cite{bglm} (it is conjectured that the condition on $|G|$ can be removed). Moreover, in recent years there has been considerable interest in attempting to classify the groups $G$ for which $\Gamma(G)$ shares the strong properties of the generating graphs of simple groups. Recently it has been proved the remarkable result that
the identity is the unique isolated vertex of $\Gamma(G)$ if and only if all proper quotients of $G$ are cyclic \cite{bgh}. 
An open question is whether the subgraph $\Delta(G)$ of $\Gamma(G)$ induced by the non-isolated vertices is connected, for every finite group $G.$
The answer is positive if $G$ is soluble \cite{CL3} and in this case
the diameter of $\Delta(G)$ is at most three \cite{diam}. In \cite{hl} it is proved that when $G$ is nilpotent, then $\Delta(G)$ is maximally connected.

\

A number of important graph classes, can be defined either structurally or
in terms of forbidden induced subgraphs. The aim of this paper is to investigate some properties of the forbidden subgraphs of the generating graph.

\

A perfect graph is a graph in which the chromatic number of every induced
subgraph equals the order of the largest clique of that subgraph (clique number). A hole in a graph $\Gamma$ is an induced subgraph of $\Gamma$ isomorphic to a chordless cycle of length at least 4. An antihole is an induced subgraph $\Delta$ of $\Gamma$, such that $\overline{\Delta}$ is a hole of the complement graph $\overline{\Gamma}$. A hole
(resp. an antihole) is odd or even according to the number of its vertices.  The strong perfect graph theorem is a forbidden graph characterization of the perfect graphs as being exactly the graphs that have neither odd
holes  nor odd antiholes. It was conjectured by Claude Berge in 1961. A proof by Maria Chudnovsky, Neil Robertson, Paul Seymour and Robin Thomas was announced in 2002 and published by them in 2006 \cite{crst}. Motivated by the strong perfect graph theorem we analyze the existence of $m$-holes or $m$-antiholes in the generating graph of a finite group $G.$ The first result that can be proved with this approach is a complete characterization of the 2-generated finite nilpotent groups with a perfect generating graph.

\begin{thm}\label{nilperf}
	Let $G$ be a finite 2-generated nilpotent group. Then $\Gamma(G)$ is perfect if and only if the index of the Frattini subgroup is the product of at most 4 primes.
\end{thm}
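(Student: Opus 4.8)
\emph{Reduction to the Frattini quotient.} Since $\frat(G)$ consists of non-generators, $\gen{g_1,g_2}=G$ if and only if $\gen{g_1\frat(G),g_2\frat(G)}=G/\frat(G)$. Put $H:=G/\frat(G)$. Choosing a transversal of $\frat(G)$ exhibits $\Gamma(H)$ as an induced subgraph of $\Gamma(G)$, and the whole of $\Gamma(G)$ is obtained from $\Gamma(H)$ by replacing each vertex with the subgraph induced on the corresponding $\frat(G)$-coset; using that $\frat(G)$ consists of non-generators one checks that each such subgraph is complete (when $G$ is cyclic and the coset consists of generators) or edgeless (otherwise), hence perfect. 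As perfect graphs are closed under vertex substitution, $\Gamma(G)$ is perfect iff $\Gamma(H)$ is. For $G$ nilpotent and $2$-generated, $H\cong\prod_p(\mathbb{Z}/p)^{d_p}$ with each $d_p\in\{1,2\}$ and $|G:\frat(G)|=\prod_p p^{d_p}$ is a product of $k+2l$ primes, where $k=\#\{p:d_p=1\}$ and $l=\#\{p:d_p=2\}$. So it remains to prove that $\Gamma(H)$ is perfect iff $k+2l\le 4$.

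\emph{Reduction to finitely many type graphs.} For coprime $H=H_1\times H_2$ we have $\gen{(x_1,x_2),(y_1,y_2)}=H$ iff $\gen{x_i,y_i}=H_i$ for $i=1,2$; so, recording a loop at $x$ whenever $\gen{x,x}$ is the whole group, $\Gamma(H)$ is the tensor (categorical) product of the $\Gamma(H_i)$ (loops are irrelevant to perfection and are only carried to compute this product). Applying this to the Sylow factors, and noting that $\Gamma(\mathbb{Z}/p)$ is a substitution of complete graphs into the $2$-vertex graph $L$ obtained from $K_2$ by adding a loop at one vertex, while $\Gamma((\mathbb{Z}/p)^2)$ is a substitution of edgeless graphs into $K_{p+1}$ together with one isolated vertex, we get that $\Gamma(H)$ is a substitution of complete and edgeless graphs into $\mathcal{T}:=L^{\times k}\times\prod_{j=1}^{l}K_{q_j+1}$, plus isolated vertices; hence $\Gamma(H)$ is perfect iff $\mathcal{T}$ is. Finally, since $K_m=K_3[K_{a_1},K_{a_2},K_{a_3}]$ for any $a_1+a_2+a_3=m\ge 3$, each factor $K_{q_j+1}$ may be replaced by $K_3$ at the cost of one more substitution of complete/edgeless graphs; so $\mathcal{T}$ is perfect iff $\mathcal{T}^\ast:=L^{\times k}\times K_3^{\times l}$ is, a graph on only $2^k3^l$ vertices.

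\emph{The dichotomy.} If $k+2l\ge 5$, write $H=H_1\times H_2$ with $H_1$ of type $(5,0),(3,1),(1,2)$ or $(0,3)$ according as $l=0$; $l=1$; $l\ge 2$ and $k\ge1$; or $k=0$ and $l\ge3$. In each case $\Gamma(H_1)$ contains an induced $C_5$, exhibited explicitly (for type $(5,0)$, the five $2$-subsets $\{1,2\},\{3,4\},\{1,5\},\{2,3\},\{4,5\}$ of $\{1,\dots,5\}$ under ``disjointness'' adjacency; for type $(0,3)$, the triples $000,111,022,101,212$ in $K_3^{\times 3}$; similarly for $(3,1)$ and $(1,2)$). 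A product lemma then propagates this: if $\Gamma(H_1)$ has an induced $C_5$, say $x_1,\dots,x_5$, and $H_2\ne 1$, then $\{(x_i,y_i):i\in\mathbb{Z}/5\}$ induces a $C_5$ in $\Gamma(H_1\times H_2)$, where $y_1,\dots,y_5$ is any closed walk of length $5$ in the loop-graph of $\Gamma(H_2)$ (one exists: that graph has a loop if $H_2$ is cyclic, and a triangle otherwise). Hence $\Gamma(H)$ is not perfect. Conversely, if $k+2l\le 4$ then $\mathcal{T}^\ast$ is one of finitely many graphs, and one checks it has no odd hole and no odd antihole, so it is perfect by the strong perfect graph theorem: $K_3\times K_3$ is the line graph of the bipartite graph $K_{3,3}$, hence perfect; $L\times K_3$ has complement a $K_3$ with a pendant vertex at each of its vertices, hence chordal; and the remaining cases $L,L^{\times2},L^{\times3},L^{\times4},L^{\times2}\times K_3$ are verified by a short case analysis (for instance an induced $C_5$ in $L^{\times4}$, the disjointness graph on subsets of a $4$-set, would force five distinct ground elements, which is impossible, and odd antiholes are ruled out similarly).

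\emph{Main obstacle.} The genuine work lies in the perfect direction: verifying, for the finitely many $\mathcal{T}^\ast$ with $k+2l\le 4$, the absence of every odd hole and odd antihole — most delicately for $L^{\times4}$ (on $16$ vertices, where the antihole case needs an argument) and for $L^{\times2}\times K_3$ — together with making the two substitution reductions precise enough that loops, isolated vertices, and the split between cyclic and non-cyclic $G$ cause no difficulty.
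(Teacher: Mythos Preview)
Your overall strategy is sound and genuinely different from the paper's: you replace the Ravindra--Parthasarathy tensor-product criterion by Lov\'asz's substitution lemma, and your first reduction (collapsing Frattini cosets, then collapsing twin classes in each Sylow factor) to the type graph $\mathcal{T}=L^{\times k}\times\prod_j K_{q_j+1}$ is correct and elegant. Your $C_5$ constructions for $k+2l\ge 5$ and the propagation lemma are also fine.

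The gap is the second reduction, from $\mathcal{T}$ to $\mathcal{T}^\ast=L^{\times k}\times K_3^{\times l}$. The identity $K_m=K_3[K_{a_1},K_{a_2},K_{a_3}]$ is true for $K_m$ in isolation, but substitution does \emph{not} commute with the tensor product when the substituted factor carries no loops, so $\mathcal{T}$ is not in general a substitution into $\mathcal{T}^\ast$. Concretely, $L\times K_4$ has no nontrivial module: for instance $\{(1,1),(1,2)\}$ fails because $(0,1)$ is adjacent to $(1,2)$ but not to $(1,1)$, and similarly for every candidate pair. Hence $L\times K_4$ cannot be obtained from the $6$-vertex graph $L\times K_3$ by substitution, and your inference ``$\mathcal{T}$ perfect iff $\mathcal{T}^\ast$ perfect'' is unjustified.

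The repair is to verify perfection of $\mathcal{T}$ directly for each $(k,l)$ with $k+2l\le 4$ and $l\ge1$, uniformly in the primes. For $(0,2)$, $K_m\times K_n$ is the complement of the line graph of $K_{m,n}$, hence perfect. For $(1,1)$, the complement of $L\times K_m$ is a clique on the $(0,i)$'s with a pendant $(1,i)$ at each, hence chordal. The case $(2,1)$, i.e.\ $L^{\times2}\times K_m$, needs an honest argument for all $m$, not just $m=3$; this is where the paper instead invokes Ravindra--Parthasarathy together with the observation that $\Gamma(C_p^2)$ is $Y$-free and hole-free, which settles all $l\ge1$ cases at once without dependence on the primes.
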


In general the condition on the number of prime divisors of the index of the Frattini subgroup is neither necessary nor sufficient to ensure that the generating graph is perfect, as it follows for example from the study of the generating graph of dihedral groups.

\begin{thm}\label{diedrale}Let $D_n$ be the dihedral group of order $2n$ and degree $n.$ Then
	$\rg(D_n)$ is perfect \ifa one of the following occurs:
	\begin{enumerate}
		\item $n$ is even;
		\item $n$ is odd, divisible by at most two distinct primes.	\end{enumerate}
\end{thm}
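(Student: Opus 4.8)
The plan is to reduce perfectness of $\rg(D_n)$ to perfectness of a single auxiliary graph, a unitary Cayley graph, and then to settle the latter question.

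\emph{Step~1 (the shape of $\rg(D_n)$).} Write $D_n=\gen{r,s}$ with $r^n=s^2=1$ and $srs=r^{-1}$. A short analysis of which pairs generate $D_n$ gives: two rotations never do; a rotation $r^i$ together with a reflection does precisely when $\gcd(i,n)=1$, and then $r^i$ is joined to all $n$ reflections; and $r^is,r^js$ do precisely when $\gcd(i-j,n)=1$. Hence, after deleting the isolated vertices (the identity and the rotations $r^i$ with $\gcd(i,n)\neq1$), what remains is the join $\Delta=\overline{K_{\varphi(n)}}\vee X_n$, where the $\varphi(n)$ rotations $r^i$ with $\gcd(i,n)=1$ form an independent set and the reflections induce the unitary Cayley graph $X_n$ on $\mathbb{Z}_n$ (vertices $\mathbb{Z}_n$; $a\sim b$ iff $\gcd(a-b,n)=1$). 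The cases $n\leqs2$, where $D_n$ is abelian, are checked directly, so I assume $n\geqs3$.

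\emph{Step~2 (reduction to $X_n$).} Isolated vertices do not affect perfectness, so $\rg(D_n)$ is perfect if and only if $\Delta$ is, and the core observation is that every hole and every antihole of $\Delta$ must lie inside the copy of $X_n$. For a hole: a rotation vertex on an induced cycle $C$ of length $\geqs5$ is adjacent to all reflection vertices of $C$ and to no rotation vertex, so $C$ has at most two reflection vertices, and, having edges, at least one; then $C$ contains at least three rotation vertices, so any reflection vertex of $C$ has degree $\geqs3$ in $C$ — impossible. For an antihole: $\overline\Delta=K_{\varphi(n)}\sqcup\overline{X_n}$ is disconnected, whereas a chordless cycle of length $\geqs5$ is connected and not a clique, hence sits in $\overline{X_n}$. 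By the strong perfect graph theorem~\cite{crst} this yields: $\rg(D_n)$ is perfect if and only if $X_n$ is perfect.

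\emph{Step~3 (when $X_n$ is perfect).} If $n$ is even, $\gcd(a-b,n)=1$ forces $a-b$ odd, so $X_n$ is bipartite, hence perfect: this is case~(1). Let $n$ be odd. Adjacency in $X_n$ depends only on the residue modulo $n_0:=\prod_{p\mid n}p$, so $X_n$ is $X_{n_0}$ with each vertex blown up into an independent set; this operation preserves perfectness (a hole or antihole cannot repeat a blown-up class), and $X_{n_0}$ is an induced subgraph of $X_n$, so it suffices to treat $X_{n_0}$. If $n_0=p$ is prime, $X_p=K_p$ is perfect. If $n_0=pq$ with $p\neq q$ odd primes, the Chinese remainder theorem gives $X_{pq}\cong K_p\times K_q$ (categorical product), which is the complement of the line graph $L(K_{p,q})$; since line graphs of bipartite graphs are perfect and perfectness is closed under complementation, $X_{pq}$ is perfect: this is case~(2). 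Finally, if $n$ is odd with at least three distinct prime factors, I would exhibit an induced $5$-cycle in $X_n$. The graph $K_3\times K_3\times K_3$ contains one, e.g.\ on $(0,0,0),(1,1,1),(2,2,0),(0,1,2),(1,2,1)$; writing $\mathbb{Z}_n$ via the Chinese remainder theorem as the product of its prime-power components (each of order $\geqs3$), assign to five vertices these triples of values from $\{0,1,2\}$ in the first three components and, in every further component, five values from $\{0,1,2\}$ forming a proper $3$-colouring of $C_5$. Since $0,1,2$ are pairwise incongruent modulo every prime divisor of $n$, consecutive vertices then differ modulo each prime in every component (so are adjacent) while non-consecutive ones coincide modulo some prime in one of the first three components (so are non-adjacent); the five vertices induce a $C_5$, and $X_n$ — hence $\rg(D_n)$ — is not perfect.

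\emph{Anticipated main difficulty.} The generation bookkeeping in $D_n$ and the explicit $5$-cycle are routine; the decisive, least mechanical step is Step~2 — the observation that the ``apex'' rotation vertices cannot lie on any hole or antihole, which is exactly what collapses the question onto the unitary Cayley graph $X_n$. Beyond that one needs only the classical facts used in Step~3: line graphs of bipartite graphs are perfect, and blowing up vertices into independent sets preserves perfectness.
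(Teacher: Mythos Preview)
Your argument is correct and takes a genuinely different route from the paper's. Both proofs begin by observing that rotations cannot sit on any hole or antihole of length $\geqs 5$ (the paper via Lemma~\ref{unicomax}, you via the join description $\overline{K_{\varphi(n)}}\vee X_n$ and a direct degree count), so that the question reduces to the subgraph induced on reflections. From there the two arguments diverge. The paper handles each regime by hand: for $n$ even it invokes Lemma~\ref{c2c2} to exclude large antiholes and a parity-sum argument on the differences $x_{i+1}-x_i$ to exclude odd holes; for $n=p^a$ and $n=p^aq^b$ it gives short ad hoc contradictions; and for $n$ odd with three prime divisors it constructs a $5$-hole via the Chinese remainder theorem with explicit residue constraints on auxiliary parameters $a,b,c,d$. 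You instead recognise the reflection subgraph as the unitary Cayley graph $X_n$ and import classical perfect-graph machinery: bipartiteness for $n$ even, the replication lemma to pass to the squarefree radical, and the identification $X_{pq}\cong K_p\times K_q\cong\overline{L(K_{p,q})}$ together with the perfectness of line graphs of bipartite graphs and the weak perfect graph theorem for the two-prime case. Your $5$-hole, read through the tensor decomposition $X_{n_0}\cong K_{p_1}\times\cdots\times K_{p_k}$ and extended to extra coordinates by a proper $3$-colouring of $C_5$, is cleaner than the paper's (no side inequalities to verify). The trade-off is that the paper's proof is entirely self-contained, while yours is shorter and more conceptual but relies on several external results (Lov\'asz's perfect graph theorem, K\"onig, replication) that the paper does not assume.
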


An interesting and surprising consequence of Theorem \ref{diedrale} is that if $G$ is a 2-generated finite group and $N$ is a normal subgroup of $G$, then the fact that $\Gamma(G)$ is perfect, does not imply that
$\Gamma(G/N)$ is also perfect.  For example let $m=p_1\cdot p_2 \cdot p_3$ be the product of three distinct primes and let $G=D_{2m}$ be the dihedral group of order $4m.$ By Theorem  \ref{diedrale}, $\Gamma(G)$ is perfect, however $G$ has a normal subgroup $N$ of order 2 such that $G/N \cong D_m$ and, again by Theorem \ref{diedrale}, $\Gamma(G/N)$ is not perfect.

We will prove (see Theorem \ref{mina5}) that the alternating group $A_5$ is the smallest 2-generated finite group whose generating graph is not perfect. Moreover:
\begin{thm}\label{altesim}
	$\rg(A_n)$ and $\rg(S_n)$ are perfect \ifa $n < 5$.
\end{thm}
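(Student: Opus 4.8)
The argument splits into the two implications, and the reverse one is where all the work lies.

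The ``if'' direction is immediate from Theorem~\ref{mina5}: if $n\le 4$ then $A_n$ and $S_n$ are $2$-generated finite groups of order at most $|S_4|=24<60=|A_5|$, and since Theorem~\ref{mina5} identifies $A_5$ as the smallest $2$-generated finite group whose generating graph is not perfect, it follows that $\rg(A_n)$ and $\rg(S_n)$ are perfect for $n\le 4$.

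For the ``only if'' direction I have to show that $\rg(A_n)$ and $\rg(S_n)$ are not perfect for every $n\ge 5$. By the strong perfect graph theorem it is enough to exhibit one odd hole or one odd antihole in each graph, and since $\overline{C_5}\cong C_5$ the most economical target is a single induced $5$-cycle. The case $\rg(A_5)$ is already covered by Theorem~\ref{mina5}, so the remaining task is $\rg(S_5)$ together with $\rg(A_n)$ and $\rg(S_n)$ for $n\ge 6$, and the plan is to produce, in each of these groups, five pairwise distinct permutations $x_1,\dots,x_5$ (indices read modulo $5$) with $\langle x_i,x_{i+1}\rangle$ equal to the whole group for each $i$ and $\langle x_i,x_{i+2}\rangle$ a proper subgroup for each $i$, so that the subgraph induced on $\{x_1,\dots,x_5\}$ is a $C_5$. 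The five ``diagonal'' non-edges would be forced by intransitivity: reserve five points of $\{1,\dots,n\}$, labelled by the five diagonals of the pentagon on the index set $\{1,\dots,5\}$, and arrange that $x_i$ fix exactly the two reserved points whose label contains $i$ while moving the bulk of the remaining $n-2$ points; then $x_i$ and $x_{i+2}$ have a common fixed point, hence lie in a common point stabiliser and cannot generate. For consecutive indices $x_i$ and $x_{i+1}$ one instead wants no common fixed point (the extra, non-reserved fixed points are assigned so that adjacent indices differ, which is possible since $C_5$ is $3$-chromatic) and overlapping supports, so that $\langle x_i,x_{i+1}\rangle$ is transitive on all $n$ points; transitivity is then promoted to the whole of $S_n$ or $A_n$ by prescribing the cycle type of $x_i$ to contain a cycle of prime length $p$ with $n/2<p\le n-3$ and with the remaining cycle lengths coprime to $p$, so that a suitable power of $x_i$ is a genuine $p$-cycle. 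The inequality $n/2<p$ makes it impossible for a transitive group containing a $p$-cycle to preserve a nontrivial block system, hence $\langle x_i,x_{i+1}\rangle$ is primitive and Jordan's theorem forces it to contain $A_n$; one then fixes parities, taking all $x_i$ even in the alternating case (so $\langle x_i,x_{i+1}\rangle=A_n$) and taking the even permutations among $x_1,\dots,x_5$ to form an independent set of $C_5$ in the symmetric case, say $x_1,x_3$ even and $x_2,x_4,x_5$ odd, so that every consecutive pair contains an odd permutation and hence generates $S_n$ rather than only $A_n$. The finitely many small values of $n$ for which no prime in the required range exists or for which the allotted moved points are too few to realise the prescribed cycle type — in particular $n\le 7$ and a couple of further values — would be dispatched by writing down explicit $5$-tuples and checking the ten pairs directly, the group $A_5$ being already handled by Theorem~\ref{mina5}.

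The main obstacle is the verification of the five ``edges'': transitivity and the presence of a prime-length cycle are clear from the construction, but excluding the proper primitive overgroups of $\langle x_i,x_{i+1}\rangle$ is the delicate step, and it is precisely this that dictates the rigid choice of cycle types, in particular the bound $p>n/2$; dually one must make sure that no ``diagonal'' pair secretly generates the whole group, which is why retaining a genuine common fixed point throughout is the cleanest way to force those non-edges. Once the family $x_1,\dots,x_5$ has been pinned down and these two checks carried out, $\rg(A_n)$ and $\rg(S_n)$ contain an induced $C_5$ and hence are not perfect by the strong perfect graph theorem, which completes the proof.
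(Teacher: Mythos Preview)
Your ``if'' direction contains a genuine circularity. In the paper, Theorem~\ref{mina5} is proved \emph{after} Theorem~\ref{altesim}, and its opening line is ``By Theorem~\ref{alterni}\ldots'': the assertion that $A_5$ is the unique group of order at most $60$ with non-perfect generating graph presupposes that $\Gamma(A_5)$ is not perfect, which is precisely the $n=5$ case of the result you are proving. You therefore cannot appeal to Theorem~\ref{mina5} either for the perfection of $\Gamma(A_n),\Gamma(S_n)$ with $n\le 4$ or, later, for the imperfection of $\Gamma(A_5)$. The paper instead treats $n\le 4$ directly and briefly: $S_2$ and $A_3$ are trivial, $S_3\cong D_3$ falls under Theorem~\ref{diedrale}, and $S_4$, $A_4$ are handled by short ad hoc arguments using Lemma~\ref{unicomax} (a $4$-cycle in $S_4$, respectively a $3$-cycle in $A_4$, lies in a unique maximal subgroup and so cannot sit on an odd hole or antihole). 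These are easy to reproduce and avoid the dependency loop.

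Your ``only if'' outline is essentially the paper's own strategy---force the five non-edges via a common fixed point, and upgrade the five edge-pairs from transitive to primitive to containing $A_n$ via the Jordan-type result of~\cite{Jo}---but it remains a sketch where the paper carries it out in full. Rather than a uniform template governed by a prime $p\in(n/2,\,n-3]$, the paper splits by parity and writes down explicit $5$-tuples: for $A_n$ with $n\ge 5$ odd, five $(n-2)$-cycles each fixing a prescribed pair from $\{1,\dots,5\}$, with primitivity supplied by Lemma~\ref{prim2}; for $A_n$ with $n\ge 8$ even, five $(n-3)$-cycles; for $S_n$ with $n\ge 8$ even or $n\ge 9$ odd, carefully chosen mixes of long cycles (and one short cycle to witness a small cycle in the primitive group); and $S_5,S_6,S_7,A_6$ are dispatched by listed $5$-holes. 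Your abstract template would also work once executed, but two points deserve care: transitivity of the adjacent pairs is not automatic from the fixed-point bookkeeping alone (the moved supports must mesh so that the orbits merge, and the paper checks this explicitly in each case), and primitivity in the paper is obtained by direct block arguments (Lemmas~\ref{prim1} and~\ref{prim2}) rather than by producing a prime-length cycle, so no Bertrand-type input is needed.
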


The behaviour of the generating graph of the alternating groups suggest the following conjecture.

\begin{con}\label{cong1}
	If $G$ is a finite non-abelian simple group, then $\Gamma(G)$ is not perfect.
\end{con}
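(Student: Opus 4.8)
We sketch the strategy we would adopt to attack Conjecture~\ref{cong1}. By the strong perfect graph theorem it suffices, for each finite non-abelian simple group $G$, to exhibit an odd hole or an odd antihole in $\rg(G)$, and we aim for the smallest possibility: an induced $5$-cycle (a pentagon). Since $C_5$ is self-complementary, an induced pentagon $v_1v_2v_3v_4v_5$ in $\rg(G)$ is a set of five elements for which the five ``consecutive'' pairs $\{v_i,v_{i+1}\}$ (indices mod $5$) generate $G$, while the five ``diagonal'' pairs $\{v_i,v_{i+2}\}$ each lie in a proper subgroup. A simple but useful remark is that the diagonal pairs themselves form a $C_5$, hence no proper subgroup of $G$ may contain three of the $v_i$; consequently any successful configuration must consist of five maximal subgroups $M_1,\dots,M_5$ and five elements, arranged so that each diagonal pair lies in one $M_k$ and each $v_i$ lies in exactly the two $M_k$ corresponding to its two incident diagonals. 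By the classification of finite simple groups we then split into cases: the alternating and symmetric groups are already settled by Theorem~\ref{altesim} (and the case $A_5\cong\psl_2(4)$ by Theorem~\ref{mina5}); the sporadic groups can be disposed of by a finite computation; the groups of Lie type are the crux.

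For the groups of Lie type the plan is to realize the ``pentagon of maximal subgroups'' above inside the subgroup geometry of $G$. A model to generalize is the pentagon in $\rg(A_5)$ formed by the five $3$-cycles $(1\,2\,3)$, $(3\,4\,5)$, $(1\,2\,4)$, $(1\,3\,5)$, $(2\,4\,5)$: here each $3$-cycle fixes exactly two points, the five diagonal pairs generate the five point stabilizers $A_4$, while the consecutive pairs, whose supports cover all five points and meet in a single point, generate $A_5$. For $\psl_2(q)$ one would look for an analogous configuration sitting inside the dihedral and Borel subgroups; for the classical groups $\psl_n(q)$, $\spp_{2n}(q)$, $\psu_n(q)$ and the orthogonal types, one would use a cyclically arranged family of parabolic (subspace) stabilizers, placing each $v_i$ in the intersection of two consecutive members of the family; for the exceptional groups one would use instead a suitable pentagon of parabolic or subsystem subgroups. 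Once the five maximal subgroups and the elements $v_i$ are chosen so that the diagonal incidences hold, it remains to check that the five consecutive pairs generate the whole of $G$; here one expects to invoke the known probabilistic and spread results for simple groups (every finite simple group has spread at least $2$, and the probability that two uniformly random elements generate $G$ tends to $1$), which guarantee that ``generic'' choices of the $v_i$ within the prescribed intersections do generate $G$.

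The main obstacle is making all of this uniform. The pentagon-of-maximal-subgroups constraint is rigid, and turning it into an explicit, family-independent construction — together with a separate treatment of the finitely many small groups of Lie type, where the probabilistic input is too weak and the maximal subgroup structure is more erratic — is precisely what prevents us from upgrading Conjecture~\ref{cong1} to a theorem. A reasonable first target would be to establish the conjecture for the families $\psl_2(q)$ and $\psl_n(q)$, where the required pentagons can be written down explicitly, and then to push the argument through the remaining classical and exceptional families.
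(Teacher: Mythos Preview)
The statement is a conjecture, and the paper does not prove it; there is therefore no ``paper's own proof'' to compare against. Your proposal is explicitly a strategy sketch rather than a proof, and you say so yourself, so in that sense there is no gap to flag beyond the one you already acknowledge.

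That said, your strategy coincides almost exactly with what the paper does in its partial progress. Your ``pentagon of maximal subgroups'' configuration---five elements $v_1,\dots,v_5$ and five maximal subgroups $M_1,\dots,M_5$ with each $v_i$ lying in the two $M_k$ corresponding to its incident diagonals---is precisely Lemma~\ref{stabilizer}, specialised to the case where the $M_k$ are point stabilizers in a $2$-transitive action. The paper then carries this out for $\psl_2(q)$, ${^2B}_2(q)$ and ${^2G}_2(q)$ (Propositions in subsection~\ref{semp}), in each case checking by hand, from the known list of maximal subgroups, that the ``consecutive'' pairs really do generate $G$ rather than appealing to spread or probabilistic generation. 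So your suggested ``first target'' $\psl_2(q)$ is already done in the paper, and your programme for the remaining Lie-type families is exactly the open part of Conjectures~\ref{cong1} and~\ref{cong2}. One small correction: your claim that ``no proper subgroup of $G$ may contain three of the $v_i$'' is stronger than what the pentagon condition forces; what is actually needed (and what Lemma~\ref{stabilizer} uses) is only that each consecutive pair generates $G$, which does not by itself preclude three of the $v_i$ from lying in some other proper subgroup.
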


Indeed, the proof of Theorem \ref{altesim} shows that if $n\geq 5$ then $\Gamma(A_n)$ and $\Gamma(S_n)$ contain a 5-hole, so we may also formulate a stronger conjecture.

\begin{con}\label{cong2}
	If $G$ is a finite non-abelian simple group, then there exists a subset $X$ of $G$ such that the subgraph of $\Gamma(G)$ induced by $X$ is a 5-hole.
\end{con}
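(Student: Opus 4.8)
The plan is to translate the statement into a concrete internal configuration of $G$ and then to establish that configuration family by family, via the classification of finite simple groups. First observe what a $5$-hole in $\Gamma(G)$ is: a set $\{v_1,\dots,v_5\}$ (indices mod $5$) induces one precisely when $\langle v_i,v_{i+1}\rangle=G$ for every $i$ and $\langle v_i,v_{i+2}\rangle\neq G$ for every $i$, since every pair of vertices of a $5$-set is either consecutive or two apart. Two elements fail to generate $G$ exactly when they lie together in some maximal subgroup, so it suffices to exhibit maximal subgroups $P_1,\dots,P_5$ of $G$ (indices mod $5$) together with elements $z_i\in P_i\cap P_{i+1}$ such that
\[
\langle z_i,z_{i+2}\rangle=G\qquad\text{for all }i\in\mathbb Z/5 .
\]
Indeed $z_i$ and $z_{i+1}$ both lie in $P_{i+1}$, so $\langle z_i,z_{i+1}\rangle\neq G$ automatically; the displayed conditions force each $z_i\neq 1$ and supply the five edges; and a short check (using that $z_j$ generates $G$ with $z_{j\pm 2}$ but not with $z_{j\pm1}$) shows the five $z_i$ are pairwise distinct. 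Hence $\{z_1,\dots,z_5\}$ induces a $5$-hole, and the task reduces to producing such a \emph{pentagon of maximal subgroups} in every finite non-abelian simple group.

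Now invoke the classification. For $G=A_n$ with $n\geq5$ this is already available: the proof of Theorem~\ref{altesim} exhibits a $5$-hole in $\Gamma(A_n)$. For the sporadic groups — and for the finitely many small groups of Lie type that will fall outside the generic analysis — the generating graph is explicitly computable, and one checks by machine that a pentagon of maximal subgroups exists, searching among conjugates of two or three well chosen maximal subgroups. The substantial case is the groups of Lie type. Here the natural choice for the $P_i$ is the set of conjugates of a fixed parabolic subgroup (e.g.\ a point or subspace stabilizer in the natural module of a classical group): two parabolics in general position meet in a subgroup containing a maximal torus (or a Levi-type complement), which provides the $z_i$. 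Concretely, for $\psl(2,q)$ one takes $P_i$ to be the stabilizer of the $i$-th of five points on $\mathbb P^1(\FF_q)$, so $P_i\cap P_{i+1}$ is a maximal split torus; one then needs that two maximal split tori attached to disjoint pairs of points generate $\psl(2,q)$, which holds once $q$ is large enough. For the higher-rank classical and the exceptional families one argues analogously, choosing a suitable parabolic — or classical/subfield subgroups where parabolics do not cooperate, as happens for several small $\psl(2,q)$, $\psl(3,q)$, $\psu(3,q)$ and low-rank exceptional groups — and certifying the five conditions $\langle z_i,z_{i+2}\rangle=G$ from known generation results: that $\Delta(G)$ has diameter two \cite{bgk}, results on spread and on which elements extend to a generating pair, and Liebeck--Shalev type bounds for the proportion of elements inside a prescribed maximal subgroup.

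The main obstacle is the tension between the two kinds of requirement imposed on the $z_i$: they must be pinned inside prescribed maximal subgroups (a concentration condition) while cyclically shifted pairs of them must still generate $G$ (a spread condition). Since non-generation of a random pair is a rare event, a purely probabilistic argument is not available, and one is forced into an explicit construction that must be organised family by family; as usual, the low-rank groups of Lie type and the exceptional groups of small rank, together with the finitely many genuine small exceptions, will need separate ad hoc treatment. Carrying this programme through all families would in particular settle Conjecture~\ref{cong1}.
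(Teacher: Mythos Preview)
The statement you are attempting to prove is a \emph{conjecture} in the paper, not a theorem: the paper does not give a proof of it. What the paper does do is verify the conjecture for $A_n$ (via Theorem~\ref{altesim}), for $\psl_2(q)$, ${^2B}_2(q)$ and ${^2G}_2(q)$ (subsection~\ref{semp}), and, by machine, for most sporadic groups; for the remaining families of Lie type and for $J_4$, $\Th$, $\Ly$, the Baby Monster and the Monster it is left open. So there is no ``paper's own proof'' to compare against.

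Your proposal is likewise not a proof but an outline of a programme, and you say as much in your final paragraph. The key reduction you give --- a $5$-hole exists iff there are maximal subgroups $P_1,\dots,P_5$ and elements $z_i\in P_i\cap P_{i+1}$ with $\langle z_i,z_{i+2}\rangle=G$ --- is exactly the content of the paper's Lemma~\ref{stabilizer}, and your suggestion to take the $P_i$ to be point-stabilisers in a doubly transitive action is precisely the method the paper implements for $\psl_2(q)$, the Suzuki and the small Ree groups. So the approach is not new relative to the paper; what is missing, in both your write-up and the paper, is the actual verification of the generation conditions $\langle z_i,z_{i+2}\rangle=G$ across all families. Your appeals to diameter-two results, spread, and Liebeck--Shalev bounds are suggestive but do not do the job: those results say that generic pairs generate, whereas here each $z_i$ is constrained to lie in two prescribed maximal subgroups, which is exactly the ``concentration versus spread'' tension you yourself flag. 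Until that tension is resolved family by family --- including the higher-rank classical groups and the exceptional groups, for none of which you give an argument --- the conjecture remains open, and your text should be read as a strategy sketch rather than a proof.
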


With the use of GAP \cite{GAP4}, we have checked the existence of a $5$-hole in $\rg(G)$, if $G$ is the Tits group or one of the sporadic simple groups with the exception of the Janko group $J_4$, the Thompson group, the Lyons group, the Baby Monster group and the Monster group.
Moreover  Conjecture \ref{cong2} is true 
when $G\cong \psl_2(q)$, $G\cong {^2B}_2(q)$ or $G\cong {^2G}_2(q)$ (see subsection \ref{semp}).

\

A path graph  is a graph whose vertices can be listed in the order $v_1, v_2, \dots, v_n$ such that the edges are $\{v_i, v_{i+1}\}$ where $i = 1, 2, \dots, n-1.$ A path graph with $n$-vertices is usually denoted by $P_n.$ A graph $\Gamma$ is called a cograph if $\Gamma$  has no induced subgraph isomorphic to the four-vertex path $P_4.$ Several alternative characterizations of cographs can be given:  a cograph is a graph all of whose induced subgraphs have the property that any maximal clique intersects any maximal independent set in a single vertex;
a cograph is a graph in which every nontrivial induced subgraph has at least two vertices with the same neighbourhoods;
a cograph is a graph in which every connected induced subgraph has a disconnected complement; a cograph is a graph all of whose connected induced subgraphs have diameter at most 2. We will prove
that if $N$ is a normal subgroup of a  2-generated finite group $G$ and   $\Gamma(G/N)$ contains an induced
subgraph isomorphic to $P_n$, then so does $\Gamma(G)$ (see Lemma \ref{quozienti}). Thus, unlike the property that the generating graph is perfect, the property that $\Gamma(G)$ is a cograph is inherited by the epimorphic images of $G.$ This is a considerable advantage in the study of groups whose generating graph is a cograph and allows to obtain quite general results. For example we can completely characterize the 2-generating finite soluble group whose generating graph is a cograph.

\begin{thm}\label{gencog}
	Let $G$ be a 2-generated finite soluble group. Then $\Gamma(G)$ is a cograph if and only if one of the following occurs.
	\begin{enumerate}
		\item $G$ is cyclic, and the order of $G$ is divisible by at most two different prime numbers.
		\item $G$ is a $p$-group.
		\item $G/\frat(G) \cong V \rtimes \langle x \rangle$ where $x$ has prime
		order
		and $V$ is a faithful irreducible $\langle x \rangle$-module.
	\end{enumerate}
\end{thm}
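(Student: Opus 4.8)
The plan is to analyze the structure of a 2-generated finite soluble group $G$ via its Frattini quotient and the chief series of $\ov{G} = G/\frat(G)$. First I would establish the easy direction: if $G$ satisfies one of (1)--(3), then $\Gamma(G)$ is a cograph. For (1), when $G$ is cyclic of order divisible by at most two primes, $\Gamma(G)$ is a complete multipartite-type graph (actually a disjoint union of a clique on the generators and isolated points, suitably described), and a direct check shows no induced $P_4$ occurs. For (2), if $G$ is a $p$-group then $g_1, g_2$ generate $G$ if and only if their images generate $G/\frat(G)$, so $\Gamma(G)$ is a "blow-up" of $\Gamma(G/\frat(G))$ where $G/\frat(G)$ is elementary abelian; one checks the generating graph of $(C_p)^d$ is a cograph (the non-generating vertices within a common Frattini coset / hyperplane structure have equal neighbourhoods). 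For (3), $G/\frat(G) \cong V \rtimes \langle x \rangle$ with $V$ a faithful irreducible module, and here one shows directly that $\Gamma(G)$ is a cograph: the key point is that the "two vertices with the same neighbourhood" characterization holds, using that elements outside $V$ all behave alike up to the module action and that $V$ itself contributes only isolated vertices (since $V$ is not cyclic unless trivial, and if $V$ is cyclic the group is handled separately).

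For the converse, suppose $\Gamma(G)$ is a cograph; by Lemma \ref{quozienti} every quotient $\Gamma(G/N)$ is also a cograph, so in particular $\Gamma(\ov{G})$ is a cograph where $\ov{G}=G/\frat(G)$ is a 2-generated soluble group with trivial Frattini subgroup. Thus it suffices to classify such $\ov{G}$ and then argue that the full group $G$ is forced into one of (1)--(3). I would split on whether $\ov{G}$ is abelian or not. If $\ov{G}$ is abelian with trivial Frattini subgroup it is a direct product of cyclic groups of prime order; if three distinct primes $p,q,r$ divide $|\ov G|$ one exhibits a $P_4$ using generators hitting different combinations of the Sylow subgroups (this is where the "at most two primes" in (1) comes from, and combined with being a $p$-group case or the mixed case this forces (1) or a $p$-group lift, i.e. (2)). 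If $\ov G$ is nonabelian, let $N$ be a minimal normal subgroup; since $\ov G$ is soluble, $N$ is an elementary abelian $p$-group, and since $\frat(\ov G)=1$, $N$ has a complement $H$, so $\ov G = N \rtimes H$. I would then show that $H$ must be cyclic of prime order and $N$ must be a faithful irreducible $H$-module: if $H$ is not of prime order, or if $N$ is reducible, or if the action is unfaithful, one constructs an induced $P_4$ in $\Gamma(\ov G)$ — typically by choosing $g_1 = (v_1, h_1)$, $g_2 = (v_2, h_2)$ type elements and exploiting a proper subspace or subgroup to break one of the three required (non-)edges in a $P_4$. Conversely excluding all such $P_4$'s pins down case (3) for $\ov G$, and a short argument (again via $g_1g_2 \in \gen{g_1,g_2}$ and the fact that generation is detected modulo $\frat(G)$) lifts this to $G$ with $G/\frat(G)$ of the stated form.

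The main obstacle I anticipate is the nonabelian converse: showing that a 2-generated soluble $\ov G$ with $\frat(\ov G)=1$ and $\Gamma(\ov G)$ a cograph must have the very rigid form $V\rtimes\langle x\rangle$ with $x$ of prime order and $V$ faithful irreducible. This requires ruling out, via explicit $P_4$ constructions, every more complicated configuration: $H$ having composite order or more than one prime divisor, $N$ being reducible as an $H$-module, the existence of multiple minimal normal subgroups, or the action having a nontrivial kernel. Each exclusion needs a careful choice of four group elements whose generating/non-generating relations form exactly a path; the bookkeeping of which pairs generate $\ov G$ (using that $\langle g_1,g_2\rangle = \ov G$ iff the pair surjects onto $\ov G/N \cong H$ and the $N$-components are not all contained in a proper $H$-submodule) is the technical heart. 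A secondary subtlety is the passage between $G$ and $G/\frat(G)$ in the nonabelian case: while $P_n$-freeness passes to quotients by Lemma \ref{quozienti}, one must check it does not create new obstructions going up, which uses that for 2-generated $G$ the Frattini subgroup contributes no new edges and the "equal neighbourhood" cograph criterion is preserved under the relevant blow-up.
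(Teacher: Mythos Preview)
Your overall architecture matches the paper's: reduce to $\ov G=G/\frat(G)$ via Lemma~\ref{quozienti}, split into the nilpotent and non-nilpotent cases, and in each case either land in (1)--(3) or exhibit a $P_4$. The easy direction and the Frattini reduction are essentially as in the paper. However, there are two genuine gaps in the converse.

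\textbf{Nilpotent case.} You only exclude the situation where \emph{three distinct} primes divide $|\ov G|$. This is not enough: with $\frat(\ov G)=1$ and $d(\ov G)\le 2$, the group $\ov G\cong C_p\times C_p\times C_q$ (and likewise $C_p\times C_p\times C_q\times C_q$) has only two distinct prime divisors, is $2$-generated, and satisfies none of (1)--(3), yet your argument does not eliminate it. The paper's Lemma~\ref{abelian} handles this correctly by bounding the number of prime factors of $|G/\frat(G)|$ \emph{with multiplicity}: one exhibits the explicit path $(x_1,1,1),(1,x_2,x_3),(x_1,1,x_3),(1,x_2,1)$ in $C_{p_1}\times C_{p_2}\times C_{p_3}$ whenever $p_3\notin\{p_1,p_2\}$, which covers the repeated-prime case.

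\textbf{Non-nilpotent case.} Here your proposal is a plan rather than a proof, and it misses the one genuinely nontrivial step. You write that if $H$ fails to be cyclic of prime order (or the action fails to be faithful/irreducible) ``one constructs an induced $P_4$'', but you give no mechanism for producing it. The paper's engine is Lemma~\ref{semidiretto}: given $G=V\rtimes H$ with $V$ irreducible, not $H$-isomorphic to any complemented chief factor of $H$, and generators $a,b$ of $H$ neither of which centralises $V$, a \emph{counting} argument on the sets $\Omega_a=\{v:\gen{a,bv}=G\}$, $\Omega_b=\{v:\gen{av,b}=G\}$ and $\Omega=\{(v_1,v_2):\gen{av_1,bv_2}=G\}$ forces $(\Omega_b\times\Omega_a)\cap\Omega\ne\emptyset$, yielding a $P_4$ of the shape $(a,\,bv_2,\,av_1,\,b)$. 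This is then applied repeatedly in Lemma~\ref{nonnil} (together with further explicit paths such as $(n,xy,ny,x)$ and $(x^p,x^n,x,(x^n)^p)$) to force $K=H$ to be cyclic, then a $p$-group, then of prime order, with faithful action. Your generation criterion ``surjects onto $H$ and the $N$-components are not all in a proper submodule'' is also not correct in this setting; what one actually uses is that $\gen{av_1,bv_2}\ne G$ iff it is a complement of $V$, and the number of complements equals $|V|$ because $V$ is not $H$-isomorphic to a complemented chief factor of $H$. Without these counting and complement-theoretic ingredients the nonabelian converse does not go through.
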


Moreover we will prove the following theorems.

\begin{thm}\label{unicoiso}
	Let $G$ be a finite group and assume that the identity element is the unique isolated vertex of $\Gamma(G).$	If  $\Gamma(G)$ is a cograph, then $G$ is soluble.
\end{thm}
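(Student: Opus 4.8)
The plan is to suppose, for a contradiction, that $G$ is insoluble while $\Gamma(G)$ is a cograph, and to exhibit an induced $P_4$ in $\Gamma(G)$. By \cite{bgh}, the hypothesis that $1$ is the only isolated vertex means that every proper quotient of $G$ is cyclic; consequently $G$ has a unique minimal normal subgroup $N=\soc(G)$, the quotient $G/N$ is cyclic, and — since $G$ is not soluble — $N=S_1\times\cdots\times S_k$ with each $S_i\cong S$ non-abelian simple. A non-abelian minimal normal subgroup is never contained in $\frat(G)$, so $\frat(G)=1$ and every non-identity element of $G$ lies in some maximal subgroup.

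I would next use the characterization of cographs as the graphs in which every connected induced subgraph has a disconnected complement. Since every finite group with cyclic proper quotients has uniform spread at least $2$ (\cite{bgh}), the subgraph $\Delta(G)$ of $\Gamma(G)$ induced by the non-identity elements is connected (in fact of diameter at most $2$). Hence the vertex set of $\Delta(G)$ splits as a disjoint union $A\cup B$ with $A,B\neq\emptyset$ and $\langle a,b\rangle=G$ for all $a\in A$, $b\in B$. The crucial consequence: for every maximal subgroup $M$ of $G$ the set $M\setminus\{1\}$ is contained in $A$ or in $B$ — otherwise there are $x\in M\cap A$, $y\in M\cap B$ with $\langle x,y\rangle=G\leq M$, which is absurd. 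Call $M$ of \emph{type} $A$ or $B$ accordingly; then both types occur, and a type-$A$ maximal subgroup and a type-$B$ maximal subgroup intersect trivially.

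Suppose first $G\neq N$. Since $N$ is a proper subgroup, the same argument shows $N\setminus\{1\}$ cannot meet both $A$ and $B$; after relabelling, $N\setminus\{1\}\subseteq A$. I would then prove that every maximal subgroup $M$ of $G$ meets $N$ non-trivially. If not, $M\cap N=1$ forces $M\cong MN/N$ to be cyclic and, by maximality, $MN=G$; so $M$ is a cyclic complement of $N$. Then the cyclic group $M$ acts on the insoluble group $N$, and therefore $C_N(M)\neq 1$ (a finite group admitting a fixed-point-free group of automorphisms is soluble, by the classification). Picking $1\neq n\in C_N(M)$, the subgroup $M\langle n\rangle=\langle M,n\rangle$ is proper and properly contains $M$, contradicting the maximality of $M$. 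Hence every maximal subgroup of $G$ contains a non-identity element of $N\subseteq A$, so no maximal subgroup has type $B$ — but each vertex of $B$ lies in a maximal subgroup of $G$. This contradiction settles the case $G\neq N$.

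There remains the case $G=N=S$ simple, which I expect to be the main obstacle: the reduction above degenerates, and one must show directly that the vertex set of $\Delta(S)$ does not split as above, equivalently that the graph whose vertices are the maximal subgroups of $S$, with an edge whenever two of them intersect non-trivially, is connected. I would attack this with the classification. Any two maximal subgroups of $S$ of index less than $\sqrt{|S|}$ intersect non-trivially, so these all receive the same type, say $A$; it then suffices to check that every maximal subgroup of $S$ contains a non-identity element that also lies in some maximal subgroup of index less than $\sqrt{|S|}$. For $\psl_2(q)$, ${^2B}_2(q)$, ${^2G}_2(q)$ and the sporadic groups this follows from the analysis behind Conjecture \ref{cong2} (which in fact produces a $5$-hole, containing an induced $P_4$); for the alternating groups the only elements lying in no maximal subgroup of index less than $\sqrt{|S|}$ are essentially those with no short cycle, and such elements cannot fill a maximal subgroup; and for the remaining groups of Lie type one argues along the same lines, using the structure of parabolic subgroups and maximal tori.
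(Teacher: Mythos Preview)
Your overall strategy coincides with the paper's: show that $\Delta(G)$ is connected, use the cograph property to split $G\setminus\{1\}=A\cup B$ with every cross-pair generating $G$, observe that each maximal subgroup lies in one side, and derive a contradiction from the connectedness of the \emph{intersection graph} of maximal subgroups (equivalently, of non-trivial proper subgroups) when $G$ is insoluble. The paper does exactly this, but instead of proving the connectedness it simply cites Shen \cite{she}: for any finite insoluble group the graph whose vertices are the non-trivial proper subgroups, with $H\sim K$ iff $H\cap K\neq 1$, is connected. That single citation replaces your entire case analysis.

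Your treatment of the case $G\neq N$ is a pleasant, self-contained alternative (relying on the CFSG-dependent fact that an insoluble group admits no fixed-point-free automorphism), and it works. The genuine gap is the case $G=N=S$ simple. You correctly identify that what is needed is precisely the connectedness of the intersection graph of maximal subgroups of $S$, but your sketch is not a proof: the appeal to ``the analysis behind Conjecture~\ref{cong2}'' covers only a handful of families, and the sentence about ``the remaining groups of Lie type\dots parabolic subgroups and maximal tori'' is a promissory note, not an argument. Since this connectedness statement is exactly Shen's theorem, you should either cite \cite{she} (as the paper does) or give a complete proof; without one of these the simple case is open and the proposal is incomplete.
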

\begin{thm}\label{perf}
	Let $G$ be a  2-generated finite group. If $\Gamma(G)$ is a cograph and $N$ is a maximal normal subgroup of $G,$ then $G/N$ is abelian.
\end{thm}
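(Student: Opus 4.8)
The plan is to exploit two facts already available: that the cograph property passes to quotients (Lemma \ref{quozienti}) and that a finite group whose only isolated vertex is the identity and whose generating graph is a cograph must be soluble (Theorem \ref{unicoiso}). Recall first that $N$ being a maximal normal subgroup of $G$ means exactly that $H:=G/N$ is a nontrivial simple group. If $H$ is cyclic of prime order there is nothing to prove, so I would assume for a contradiction that $H$ is a nonabelian simple group.

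Since $H$ is an epimorphic image of the $2$-generated group $G$, it is itself $2$-generated, so $\Gamma(H)$ is a meaningful object. By Lemma \ref{quozienti} in contrapositive form, the hypothesis that $\Gamma(G)$ contains no induced $P_4$ forces $\Gamma(H)=\Gamma(G/N)$ to contain no induced $P_4$; that is, $\Gamma(H)$ is again a cograph. Next I would invoke the structure theory of simple groups: because $H$ is nonabelian simple, the identity is the unique isolated vertex of $\Gamma(H)$ (this is \cite{gk}, or alternatively the more recent criterion that all proper quotients of $H$ are cyclic). Thus $H$ satisfies the hypotheses of Theorem \ref{unicoiso}, whence $H$ is soluble — contradicting that $H$ is nonabelian simple. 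Therefore $G/N$ cannot be nonabelian simple, so it is abelian, as claimed.

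I do not expect a genuine obstacle in this argument: it is short precisely because the substance is packaged inside Theorem \ref{unicoiso} and Lemma \ref{quozienti}. The only point demanding care is to check that the simple quotient $H$ really has the identity as its unique isolated vertex before applying Theorem \ref{unicoiso}. If instead one wanted a self-contained proof, the real work would be exactly the content of Theorem \ref{unicoiso}, namely exhibiting for every nonabelian simple group $S$ four elements $a,b,c,d\in S$ with $\langle a,b\rangle=\langle b,c\rangle=\langle c,d\rangle=S$ while $\langle a,c\rangle$, $\langle b,d\rangle$ and $\langle a,d\rangle$ are all proper, producing an induced $P_4$ in $\Gamma(S)$; this is where the combinatorics of generation in simple groups (existence of suitable maximal subgroups, spread-type estimates) would be the main hurdle.
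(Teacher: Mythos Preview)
Your proof is correct and is exactly the argument the paper gives: the authors simply write that the theorem ``follows immediately combining Lemma \ref{quozienti} and Theorem \ref{unicoiso},'' and you have spelled out that combination in full detail, including the check (via \cite{gk}) that a nonabelian simple quotient has the identity as its only isolated vertex so that Theorem \ref{unicoiso} applies.
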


\begin{cor}
	Let $G$ be a non-trivial 2-generated finite group. If $G$ is perfect, then $\Gamma(G)$ is not a cograph.
\end{cor}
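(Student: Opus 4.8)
The plan is to obtain this as an immediate corollary of Theorem \ref{perf}, arguing by contradiction. Suppose $G$ is a non-trivial $2$-generated finite group which is perfect, i.e. $G=G'$, and suppose for contradiction that $\Gamma(G)$ is a cograph. Since $G$ is non-trivial and finite, it has at least one maximal normal subgroup: a proper normal subgroup $N\nn G$ such that no normal subgroup of $G$ lies strictly between $N$ and $G$ (equivalently, $G/N$ is a non-trivial simple group). Pick such an $N$.

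Now I would invoke Theorem \ref{perf}: since $\Gamma(G)$ is a cograph and $N$ is a maximal normal subgroup of $G$, it follows that $G/N$ is abelian. But $G/N$ abelian is equivalent to $[G,G]\le N$, that is $G'\le N$; since $N$ is a proper subgroup of $G$, this gives $G'\ne G$, contradicting the hypothesis that $G$ is perfect. (Equivalently: a maximal normal subgroup has simple quotient, and the only abelian simple group is cyclic of prime order, so $G/N\cong C_p$ would be a non-trivial abelian quotient of the perfect group $G$, which is impossible.) Hence no such configuration exists, and $\Gamma(G)$ is not a cograph.

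I do not expect any real obstacle here: the corollary is a direct consequence of Theorem \ref{perf}, and the only facts used beyond that theorem are entirely standard, namely that a non-trivial finite group admits a maximal normal subgroup and that $G/N$ is abelian precisely when $G'\le N$. The one point worth stating explicitly for the reader is why a maximal normal subgroup exists and why the conclusion "$G/N$ abelian" immediately contradicts perfection; both are one-line remarks.
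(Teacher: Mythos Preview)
Your proof is correct and matches the paper's approach exactly: the paper states the corollary immediately after Theorem~\ref{perf} without giving a separate proof, treating it as the obvious contrapositive application you describe. Your write-up simply makes explicit the one-line deduction the authors leave to the reader.
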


The previous result suggests the following stronger conjecture.

\begin{con}\label{cong4}
	Let $G$ be a 2-generated finite group. If $\Gamma(G)$ is a cograph, then $G$ is soluble.
\end{con}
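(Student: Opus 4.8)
The plan toward Conjecture \ref{cong4} is to argue by minimal counterexample: suppose $G$ is a $2$-generated insoluble group of least order with $\Gamma(G)$ a cograph. Quotients of $2$-generated groups are $2$-generated, and by Lemma \ref{quozienti} an induced $P_4$ in $\Gamma(G/N)$ lifts to one in $\Gamma(G)$, so minimality forces every proper quotient of $G$ to be soluble. The standard consequences follow: $G$ has a unique minimal normal subgroup $N$ (otherwise $G$ embeds into a product of two soluble quotients), $G/N$ is soluble, and since $G$ is insoluble $N$ is non-abelian, so $N=\soc(G)$ is a direct power of a non-abelian simple group $S$. Moreover $G/N$ is a $2$-generated group with $\Gamma(G/N)$ a cograph, hence by Theorem \ref{gencog} it is tightly constrained: cyclic of order divisible by at most two primes, a $p$-group, or of type $V\rtimes\langle x\rangle$ modulo its Frattini subgroup.

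The second step splits according to the isolated vertices of $\Gamma(G)$. If the identity is the only isolated vertex then, by \cite{bgh}, every proper quotient of $G$ is cyclic, and the contradiction is exactly Theorem \ref{unicoiso}; this already settles every group whose proper quotients are all cyclic, in particular every non-abelian simple group and every monolithic group whose quotient by the socle is cyclic. So one may assume $G$ has a proper non-cyclic quotient, which forces $G/N$ to be a non-cyclic member of the list above, and the remaining task is to produce an induced $P_4$ directly in $\Gamma(G)$. The structure theory of $2$-generated groups with a unique non-abelian minimal normal subgroup (crown-based powers of a monolithic primitive group $L$ with non-abelian socle) is the framework in which to build such a $4$-tuple, combining a generating pair of the soluble quotient with elements whose images modulo $N$ are prescribed; this reduces the heart of the problem to understanding $\Gamma(L)$ for monolithic primitive groups $L$ with non-abelian socle, and in particular for almost simple groups.

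The crux, and the step I expect to be the real obstacle — precisely why this remains a conjecture — is to prove that $\Gamma(L)$ contains an induced $P_4$ for every such $L$, uniformly across all families of non-abelian simple groups, including the awkward cases in which $L/\soc(L)$ is non-cyclic and the classical groups of unbounded rank. Concretely one wants elements $g_1,g_2,g_3,g_4\in L$ with $\langle g_1,g_2\rangle=\langle g_2,g_3\rangle=\langle g_3,g_4\rangle=L$ while each of $\{g_1,g_3\}$, $\{g_1,g_4\}$ and $\{g_2,g_4\}$ lies in a proper subgroup, and the natural tools are the well-developed theory of spread and uniform spread of almost simple groups together with precise information on the overgroups of elements of prime order. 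A sufficient, though stronger than necessary, input would be Conjecture \ref{cong2}, since a $5$-hole contains induced copies of $P_4$; the analyses of $A_n$, $S_n$, $\psl_2(q)$, ${^2B}_2(q)$ and ${^2G}_2(q)$ carried out elsewhere in the paper could be invoked, but a complete argument would require a uniform, spread-based construction covering the remaining classical and exceptional groups.
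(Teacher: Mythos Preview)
The statement you are addressing is Conjecture~\ref{cong4}: the paper offers no proof, only the partial results of Theorems~\ref{unicoiso} and~\ref{perf} and the remark that these suggest the conjecture. There is therefore no proof in the paper against which to compare your attempt.

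That said, your outline is a coherent and reasonable plan of attack, and it is fully consistent with what the paper does establish. The reduction via a minimal counterexample is sound: Lemma~\ref{quozienti} indeed forces every proper quotient to be soluble, whence $G$ is monolithic with non-abelian socle $N$, and Theorem~\ref{gencog} then pins down $G/N$. Your case split on isolated vertices is exactly the right move, and the case where the identity is the only isolated vertex is genuinely disposed of by Theorem~\ref{unicoiso}; invoking \cite{bgh} to translate the complementary case into ``$G/N$ is non-cyclic'' is also correct. You are equally right that the residual problem is to exhibit an induced $P_4$ in $\Gamma(G)$ for every such $G$, and that a proof of Conjecture~\ref{cong2} (or even the weaker statement that $\Gamma(L)$ contains an induced $P_4$ for every almost simple or, more generally, monolithic primitive $L$ with non-abelian socle) would suffice via Lemma~\ref{quozienti}. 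This is precisely the obstruction the paper leaves open, and your proposal does not pretend otherwise.

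One small remark: the detour through crown-based powers is unnecessary here. Your minimal counterexample $G$ is already monolithic with $C_G(N)=1$, so $G$ itself embeds in $\aut(N)$; the problem is to find a $P_4$ in $\Gamma(G)$ directly, not to reduce further. The crown-based power formalism is relevant when one wants to compare $d(G)$ with $d(G/N)$, which is not the issue in this argument.
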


A graph is chordal if it contains no induced cycle of length greater then
3. A graph is called split if its vertex set is the disjoint union of two
subsets $A$ and $B$ so that $A$ induces a complete graph and $B$ induces
an empty graph. In the final part of the paper, we will prove the following result.

\begin{thm}Let $G$ be a 2-generated finite group. Then the following conditions are equivalent.
	\begin{enumerate}
		\item $\Gamma(G)$ is split.
		\item $\Gamma(G)$ is chordal.
		\item $\Gamma(G)$ is $C_4$-free.
		\item Either $G$ is a cyclic $p$-group or $|G|=2p$ for some prime $p.$
\end{enumerate}\end{thm}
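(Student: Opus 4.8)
\emph{The implications $(1)\Rightarrow(2)\Rightarrow(3)$ and $(4)\Rightarrow(1)$.}
The first two are classical facts about graphs: a split graph is chordal (an induced cycle of length at least $4$ meets the clique part in at most two vertices, which forces the cycle to have length exactly $4$ and its two clique vertices to be non-adjacent on the cycle --- impossible), and a chordal graph has no induced $4$-cycle by definition. For $(4)\Rightarrow(1)$ one writes down a split partition of $\Gamma(G)$ in each case. If $G$ is a cyclic $p$-group, an element generates $G$ precisely when it lies outside $\frat(G)$, so $\Gamma(G)$ is split with clique part $G\setminus\frat(G)$ and independent part $\frat(G)$. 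If $G\cong C_2\times C_2$, then $\Gamma(G)=K_3\cup K_1$, which is split. If $|G|=2p$ with $p$ odd, then $G\cong C_{2p}$ or $G\cong D_p$: for $C_{2p}$ take as clique part the $p-1$ generators together with the unique involution, and as independent part the remaining elements (the $p-1$ elements of order $p$ and the identity); for $D_p$ take as clique part the $p$ reflections and as independent part the identity together with the $p-1$ non-trivial rotations. In each case a short direct check --- using that two distinct reflections, and an involution together with an element of order $p$ in a cyclic group, always generate --- verifies that this is a split partition. Hence the substance of the theorem is $(3)\Rightarrow(4)$.

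\emph{Reducing $(3)\Rightarrow(4)$.}
Assume $\Gamma(G)$ is $C_4$-free. We first note that this property is inherited by every quotient $G/N$: a non-generating pair of $G/N$ lifts automatically to a non-generating pair of $G$, so an induced $4$-cycle of $\Gamma(G/N)$ only needs its four edges to be lifted, and this is done by the Gasch\"utz-type fact that a generating pair of $G/N$ lifts to a generating pair of $G$, together with a counting argument making the four choices of coset representatives mutually compatible (the $C_4$-analogue of Lemma \ref{quozienti}). We may therefore induct on $|G|$, distinguishing whether $G$ is cyclic, a non-cyclic $p$-group, non-soluble, or soluble but neither cyclic nor a $p$-group. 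If $G=C_m$ is cyclic, two elements of orders $d$ and $e$ are adjacent exactly when $\operatorname{lcm}(d,e)=m$, and an elementary argument shows that if $m$ is neither a prime power nor twice an odd prime then there are proper divisors $d,e$ of $m$ with $\operatorname{lcm}(d,e)=m$ and $d,e\ge 3$; picking two elements of order $d$ and two of order $e$ then yields an induced $C_4$. If $G$ is a non-cyclic $p$-group, then $G/\frat(G)\cong C_p\times C_p$, and since $\frat(G)$ consists of non-generators, $\Gamma(G)$ is obtained from $\Gamma(C_p\times C_p)$ by blowing up each vertex to an independent set of size $|\frat(G)|$; for $p\ge 3$ the base graph already contains a $4$-cycle, while for $p=2$ the blow-up of the triangle of $\Gamma(C_2\times C_2)$ is complete tripartite on three parts of size $|\frat(G)|$, which contains a $4$-cycle unless $|\frat(G)|=1$, i.e.\ unless $G\cong C_2\times C_2$. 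So in these two cases $G$ satisfies $(4)$.

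\emph{The non-soluble case.}
If $G$ is non-soluble, then passing to the quotient by the soluble radical and then to a monolithic quotient, and using the descent above, we may assume that $G$ has non-abelian socle; in particular $G$ is almost simple or has socle a direct power of a non-abelian simple group. For such $G$ one shows that $\Gamma(G)$ does contain an induced $C_4$ --- drawing on the known richness of the generating graph of (almost) simple groups, together with a finite check for the small cases --- contradicting $C_4$-freeness. Hence $G$ is soluble.

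\emph{The soluble case: the main obstacle.}
This is the core of the argument. Let $G$ be soluble, $2$-generated, neither cyclic nor a $p$-group, with $\Gamma(G)$ $C_4$-free. By the inductive hypothesis every proper quotient of $G$ is a cyclic $p$-group or has order twice a prime, and by the $p$-group case we may assume $\frat(G)=1$. Choose a minimal normal subgroup $N$: it is an elementary abelian $q$-group and, since $\frat(G)=1$, has a complement $H\cong G/N$. Suppose first that $G/N$ is a cyclic $p$-group; then $q\ne p$ (otherwise $G$ would be a $p$-group), and factoring out the kernel of the action of $H$ on $N$ --- which is normal in $G$, being normalized by $H$ and centralized by $N$ --- and applying the inductive hypothesis to the resulting proper quotients forces $G$ to be cyclic (already handled) or a split extension $V\rtimes\langle x\rangle$ with $\langle x\rangle$ cyclic acting faithfully and irreducibly on $V$. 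In the latter case $x$ acts fixed-point-freely, so $G$ has several Sylow $p$-subgroups and any two elements of order $|x|$ lying in two distinct ones generate $G$; unless $|x|=2$ --- in which case irreducibility forces $\dim V=1$ and $G\cong D_q$ --- each of these Sylow subgroups contains at least two elements of order $|x|$, and taking two from each of two distinct ones gives an induced $C_4$. If instead $G/N$ has order twice a prime, one runs through the few possibilities for $|G|=2q\,|N|$ and produces an explicit complete bipartite subgraph $K_{2,2}$ of $\Gamma(G)$ in each. The real point, and the main obstacle, is to show that $D_p$, $C_{2p}$ and $C_2\times C_2$ are the only non-cyclic soluble groups whose generating graph has no induced $C_4$: the reason $D_p$ escapes is that its large maximal subgroup $C_p$ forces any generating partner of one of its elements to be a reflection (and two distinct reflections always generate), while its small maximal subgroups $C_2$ contain only one non-trivial element, so any $4$-cycle would have to live inside one of these maximal subgroups --- which is impossible --- and for every other soluble $G$ one must rule out the simultaneous presence of both of these features.
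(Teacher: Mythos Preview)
Your treatment of $(1)\Rightarrow(2)\Rightarrow(3)$ and $(4)\Rightarrow(1)$ is fine, and the explicit split partitions you write down are correct. The problem is the implication $(3)\Rightarrow(4)$: your argument is far more elaborate than necessary and contains a genuine gap.

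The gap is in the non-soluble case. You reduce to a monolithic group with non-abelian socle and then assert that ``one shows that $\Gamma(G)$ does contain an induced $C_4$ --- drawing on the known richness of the generating graph of (almost) simple groups, together with a finite check for the small cases''. This is not a proof; nothing in the paper, and nothing you cite, establishes that every such group has an induced $C_4$ in its generating graph. A second, smaller issue is the claim that $C_4$-freeness descends to quotients. Unlike the path $P_t$, where the lifting in Lemma~\ref{quozienti} proceeds sequentially, lifting a $4$-cycle requires closing the cycle: after choosing $n_1,n_2,n_3,n_4$ to realise three consecutive edges, you still need $\langle a_4n_4,a_1n_1\rangle=G$ with $n_1$ already fixed. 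Your ``counting argument making the four choices mutually compatible'' is plausible but is not actually carried out.

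The paper's argument avoids all of this with a single elementary observation that you have missed: if $G=\langle a,b\rangle$ with $\langle a\rangle\neq G$, $\langle b\rangle\neq G$, $|a|\neq 2$ and $|b|\neq 2$, then $\{a,b,a^{-1},b^{-1}\}$ induces a $C_4$ in $\Gamma(G)$ (the edges are $a\sim b$, $b\sim a^{-1}$, $a^{-1}\sim b^{-1}$, $b^{-1}\sim a$; the non-edges $a\sim a^{-1}$ and $b\sim b^{-1}$ fail because neither is a cyclic generator). So one only has to determine for which $2$-generated $G$ every generating pair of non-generators contains an involution. If $G$ is cyclic this forces $|G|$ to be a prime power or $2p$; otherwise $G$ is generated by two involutions, hence dihedral, and for $D_n$ with $n$ not prime one exhibits the $4$-cycle $\{a,b,ab^{p},b^{-1}\}$ with $|b|=n$ and $p\mid n$. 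No induction, no passage to quotients, and no case analysis by solubility is needed.
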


\section{Cographs}

Our first result is that if $\Gamma(G)$ is a cograph, then $\Gamma(G/N)$ is also a cograph, for every normal subgroup $N$ of $G.$
In order to prove a more general statement which implies the previous sentence, we need to recall an auxiliary result, which generalizes an argument due to  Gasch\"utz \cite{g2}. Given a subset $X$ of a finite group $G,$ we will denote by $d_X(G)$ the smallest cardinality
of a set of elements of $G$ generating $G$ together with the elements of $X.$ In the particular case when $X=\emptyset$, $d_\emptyset(G)=d(G)$
is the smallest cardinality of a generating set of $G.$

\begin{lemma}{\cite[Lemma 6]{CL3}}\label{modg} Let $X$ be a subset of $G$
	and $N$ a normal subgroup of $G$ and suppose that
	$\langle g_1,\dots,g_r, X, N\rangle=G.$
	If $r\geq d_X(G),$ we can find $n_1,\dots,n_r\in N$ so that $\langle g_1n_1,\dots,g_rn_r,X\rangle=G.$
\end{lemma}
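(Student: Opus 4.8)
The plan is to argue exactly as in Gasch\"utz's classical lemma, of which this is a relative version (the extra generators $X$ being carried along throughout). I would proceed by induction on $|N|$, the case $N=1$ being trivial since then $\langle g_1,\dots,g_r,X\rangle=G$ already. For the inductive step I would choose a minimal normal subgroup $M$ of $G$ contained in $N$ (every nontrivial normal subgroup contains one) and pass to $G/M$. There the hypotheses are inherited: $\langle \overline{g_1},\dots,\overline{g_r},\overline X, N/M\rangle=G/M$ and $d_{\overline X}(G/M)\leq d_X(G)\leq r$, while $|N/M|<|N|$. The inductive hypothesis then yields $n_1',\dots,n_r'\in N$ with $\langle g_1n_1',\dots,g_rn_r',X,M\rangle=G$. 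Replacing each $g_i$ by $g_in_i'$ reduces the problem to the special case in which $N=M$ is a minimal normal subgroup of $G$, and composing the two adjustments as $n_i=n_i'm_i$ recovers the general statement.

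It then remains to treat $\langle g_1,\dots,g_r,X,M\rangle=G$ with $M$ minimal normal and $r\geq d_X(G)$. The heart of the matter is a counting argument. I would look at the $|M|^r$ tuples $(g_1m_1,\dots,g_rm_r)$ with $m_i\in M$; each generates, together with $X$, a subgroup $L$ with $LM=G$, so either $L=G$ or $L$ is a proper supplement of $M$. Call a tuple \emph{bad} in the latter case. The key claim is that the number of bad tuples depends only on $G,M,X$ and not on the chosen representatives $g_i$ — equivalently, that it is constant across the fibres of the reduction map $G^r\to(G/M)^r$ lying over tuples that generate $G/M$ modulo $\overline X$. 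Granting this constancy, positivity is immediate from $r\geq d_X(G)$: that hypothesis provides a genuinely generating $r$-tuple of $G$ (pad a minimal relative generating set with identities), whose fibre therefore contains a good tuple, so the common number of bad tuples is strictly less than $|M|^r$; hence \emph{every} such fibre, in particular the one over $(\overline{g_1},\dots,\overline{g_r})$ (which generates $G/M$ modulo $\overline X$ because $\langle g_i,X\rangle M=G$), contains a good tuple. That good tuple furnishes the required $n_1,\dots,n_r\in M$.

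When $M$ is abelian the constancy is transparent and I would carry it out in full. Any proper supplement $L$ of $M$ meets $M$ in a subgroup normalized by both $L$ and $M$, hence normal in $LM=G$, so by minimality $L\cap M=1$ and $L$ is a complement. A short coset computation, writing $g_i=u_i\mu_i$ with $u_i\in U$ and $\mu_i\in M$, shows that for each complement $U$ of $M$ with $X\subseteq U$ there is exactly one tuple with all $g_im_i\in U$ (each $m_i$ is forced to be $\mu_i^{-1}$), and an order count ($|L|\geq|G|/|M|=|U|$) forces that tuple to generate $U$ itself; conversely every bad tuple generates such a complement. Thus the number of bad tuples in each fibre equals the number of complements $U\supseteq X$ of $M$, a quantity manifestly independent of the $g_i$, which is exactly the constancy needed.

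The main obstacle is the non-abelian minimal normal case, where $M$ is a direct product of isomorphic non-abelian simple groups. Here a supplement $L$ need not be a complement, since $L\cap M$ may be a nontrivial (subdirect) subgroup, so the clean ``one tuple per complement'' bijection breaks down and the number of tuples inside a given maximal supplement must be treated by a more careful inclusion--exclusion over the lattice of supplements of $M$. Nonetheless the same homogeneity principle holds: the number of bad tuples in a fibre still depends only on $G,M,X$, and I would establish this by comparing the supplement structure over different representative tuples, after which the positivity argument of the second paragraph applies verbatim. This non-abelian bookkeeping is the only genuinely technical point; everything else is the Gasch\"utz reduction adapted so as to keep the fixed set $X$ inside every subgroup throughout.
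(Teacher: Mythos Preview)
The paper does not prove this lemma at all: it is quoted verbatim as \cite[Lemma~6]{CL3} and used as a black box, so there is no ``paper's own proof'' to compare against. What you have written is a faithful reconstruction of the standard Gasch\"utz argument (the paper itself remarks that the lemma ``generalizes an argument due to Gasch\"utz''), carried through with the fixed set $X$ riding along, and the reduction to a minimal normal subgroup together with the abelian-case count is entirely correct.

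The one place your sketch is genuinely incomplete is the non-abelian minimal normal case, which you flag but do not carry out. Your description (``inclusion--exclusion over the lattice of supplements'') is the right shape, but the concrete mechanism deserves to be stated: for any supplement $L$ of $M$ with $X\subseteq L$, each coset $g_iM$ meets $L$ in a coset of $L\cap M$, so exactly $|L\cap M|^r$ of the $|M|^r$ tuples land in $L$; this number visibly does not depend on the $g_i$, and M\"obius inversion over the poset of supplements containing $X$ then gives the constancy of the bad-tuple count across fibres. With that sentence added, your argument is complete and is precisely the proof one finds in the cited source.
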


\begin{lemma}\label{quozienti}
	Let $G$ be a 2-generated finite group and $N$ a normal subgroup of $G$ and let $t\in \mathbb N$ with $t\geq 2.$ If $\Gamma(G/N)$ contains an induced
	subgraph isomorphic to $P_t$, then so does $\Gamma(G).$
\end{lemma}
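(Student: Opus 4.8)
The plan is to lift an induced $P_t$ from $\Gamma(G/N)$ to $\Gamma(G)$ by choosing preimages of its vertices carefully, using Lemma \ref{modg} to control which pairs become generating pairs in $G$. Write $\bar H = H/N$ for $H \leqslant G$, and suppose the path in $\Gamma(G/N)$ has vertices $\bar v_1, \dots, \bar v_t$ with edges exactly the consecutive pairs. The key point is that since $G$ is $2$-generated, $d_X(G) \leqslant 2$ for every $X$, and in fact if $\langle \bar g_1, \bar g_2 \rangle = G/N$ then $\langle g_1, g_2, N \rangle = G$, so Lemma \ref{modg} (with $r = 2$, $X = \emptyset$) lets us adjust each of $g_1, g_2$ by an element of $N$ to obtain an honest generating pair of $G$; similarly with $r = 1$ and $X = \{g\}$ we can fix one coordinate and adjust the other. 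So for each consecutive pair $\{\bar v_i, \bar v_{i+1}\}$ we can realize it as an edge of $\Gamma(G)$ after multiplying the $v_i$'s by suitable elements of $N$ — the difficulty is doing this \emph{simultaneously} for all $t-1$ edges while also keeping the \emph{non-edges} non-edges.

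The main obstacle is the simultaneity/consistency issue, and I expect the cleanest way around it is an induction on $t$, or equivalently a greedy left-to-right construction. First I would handle $t = 2$: given an edge $\{\bar v_1, \bar v_2\}$ of $\Gamma(G/N)$, pick any preimages $v_1, v_2$; then $\langle v_1, v_2, N\rangle = G$, so by Lemma \ref{modg} there exist $n_1, n_2 \in N$ with $\langle v_1 n_1, v_2 n_2\rangle = G$, giving an edge of $\Gamma(G)$. For the inductive step, suppose we have already found lifts $w_1, \dots, w_{i}$ of $\bar v_1, \dots, \bar v_i$ inducing the correct path on $i$ vertices in $\Gamma(G)$ (all consecutive pairs generate, all other pairs among $w_1,\dots,w_i$ do not). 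I want to choose a lift $w_{i+1}$ of $\bar v_{i+1}$ so that $\langle w_i, w_{i+1}\rangle = G$ while $\langle w_j, w_{i+1}\rangle \neq G$ for $j < i$. Applying Lemma \ref{modg} with $r = 1$ and $X = \{w_i\}$ (valid since $\langle w_i, v_{i+1}, N\rangle \supseteq \langle \bar v_i, \bar v_{i+1}\rangle$-preimage $= G$) produces $n \in N$ with $\langle w_i, v_{i+1} n\rangle = G$. The worry is that this particular coset representative $w_{i+1} := v_{i+1} n$ might accidentally generate $G$ together with some earlier $w_j$. But $\langle w_j, \bar v_{i+1}\rangle$-preimage is a proper subgroup of $G$ (since $\{\bar v_j,\bar v_{i+1}\}$ is a non-edge, and a non-edge stays a non-edge under lifting: if $\langle w_j, w_{i+1}\rangle = G$ then $\langle \bar v_j, \bar v_{i+1}\rangle = G/N$), so \emph{every} lift $w_{i+1}$ of $\bar v_{i+1}$ automatically satisfies $\langle w_j, w_{i+1}\rangle \neq G$ for all $j \leqslant i-1$. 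That is the crucial observation: non-adjacency is automatically preserved by \emph{any} choice of preimages, so we only ever need to \emph{create} the one new edge $\{w_i, w_{i+1}\}$, and Lemma \ref{modg} does exactly that without disturbing anything else.

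Putting it together: by induction on $i$ from $2$ to $t$, build $w_1, \dots, w_t$ with $w_1, w_2$ obtained from the $t=2$ case and each subsequent $w_{i+1} = v_{i+1} n_i$ chosen via Lemma \ref{modg} (with $X = \{w_i\}$) so that $\langle w_i, w_{i+1}\rangle = G$. Then for all $i$, $\{w_i, w_{i+1}\}$ is an edge of $\Gamma(G)$, and for $|i - j| \geqslant 2$, $\{\bar v_i, \bar v_j\}$ being a non-edge of $\Gamma(G/N)$ forces $\langle w_i, w_j\rangle \neq G$, i.e. $\{w_i, w_j\}$ is a non-edge of $\Gamma(G)$. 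Hence $\{w_1, \dots, w_t\}$ induces a subgraph of $\Gamma(G)$ isomorphic to $P_t$ (the $w_i$ are pairwise distinct since their images are). I should also remark that the statement $t \geqslant 2$ in the hypothesis is used only to make the base case a genuine edge; for $t = 1$ there is nothing to prove. One small point to double-check when writing the details: that $\langle w_i, v_{i+1}, N \rangle = G$, which holds because this subgroup maps onto $\langle \bar v_i, \bar v_{i+1}\rangle = G/N$ and contains $N$.
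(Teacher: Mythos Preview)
Your proof is correct and follows essentially the same route as the paper: build the lifts greedily from left to right, using Lemma~\ref{modg} first with $r=2$, $X=\emptyset$ to get the initial edge and then with $r=1$, $X=\{w_i\}$ at each subsequent step, and observe that non-edges lift for free since $\langle w_j, w_{i+1}\rangle = G$ would force $\langle \bar v_j, \bar v_{i+1}\rangle = G/N$. The only point you leave implicit that the paper makes explicit is why $r=1\geq d_{\{w_i\}}(G)$ at each step; this follows from the inductive hypothesis $\langle w_{i-1}, w_i\rangle = G$.
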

\begin{proof}Assume that $(a_1N,a_2N,\dots,a_tN)$ is a $t$-vertex path in
	$\Gamma(G/N).$ By Lemma \ref{modg} there exist $n_1, n_2 \in N$ such that
	$\langle a_1n_1, a_2n_2 \rangle=G.$ In particular $d_{\{a_2n_2\}}(G)\leq 1,$ so, again by Lemma \ref{modg}, if $t\geq 3$ then there exists $n_3 \in N$ such that $\langle a_2n_2, a_3n_3 \rangle=G.$
	By	repeating this argument, we can find $n_1,\dots,n_t\in N$ such that
	$\langle a_in_i, a_{i+1}n_{i+1}\rangle=G$ for $1\leq i\leq t-1.$ If $(r,s) \neq (i,i+1)$ for some $i\in \{1,\dots,t-1\}$, then $\langle a_r,a_s\rangle N \neq G,$ and consequently $\langle a_rn_r,a_sn_s\rangle\neq G.$
	So $(a_1n_1,\dots,a_tn_t)$ is a $t$-vertex path in $\Gamma(G).$
\end{proof}

\begin{proof}[Proof of Theorem \ref{unicoiso}] 
	This can be proved with the same argument used by Cameron in \cite[Theorem 8.8]{cam}. Let $\Delta(G)$ be the subgraph of $\Gamma(G)$ obtained by deleting the identity element. By \cite[Theorem 1]{bgh} the graph $\Delta(G)$ is connected. The join graph of $G$ is the graph whose vertices are the non-trivial proper subgroups of $G$ and in which two vertices $H$ and $K$ are adjacent if and only if $H\cap K\neq 1.$ By \cite{she} if $G$ is not soluble, then this graph is connected. It can be easily seen that this implies that the complement graph $\overline{\Delta(G)}$ is connected. Since the graph complement of a connected cograph is disconnected, it follows that $\Delta(G)$ (and consequently $\Gamma(G)$) is not a cograph when $G$ is not soluble.
\end{proof}

\begin{proof}[Proof of Theorem \ref{perf}] If follows immediately combining Lemma \ref{quozienti} and Theorem \ref{unicoiso}.
\end{proof}

\begin{lemma}\label{abelian}
	Let $G$ be a 2-generated finite nilpotent group. If $\Gamma(G)$ is a cograph, then $|G/\frat(G)|$ is the product of at most two primes.
\end{lemma}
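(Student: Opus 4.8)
The plan is to reduce, via Lemma~\ref{quozienti}, to exhibiting an induced $P_4$ in the generating graph of one of two very small abelian groups. Since $G$ is a finite nilpotent group, it is the direct product $G=P_1\times\cdots\times P_k$ of its Sylow subgroups, with $P_i$ a $p_i$-group and the primes $p_1,\dots,p_k$ pairwise distinct. Then $\frat(G)=\frat(P_1)\times\cdots\times\frat(P_k)$, so $G/\frat(G)\cong\prod_{i=1}^k P_i/\frat(P_i)$, where $P_i/\frat(P_i)$ is elementary abelian of rank $d_i:=d(P_i)$. As the direct factors have pairwise coprime orders, $d(G)=\max_i d_i$, hence every $d_i\leq 2$, and $|G/\frat(G)|=\prod_i p_i^{d_i}$ is a product of exactly $\sum_i d_i$ primes. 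Thus it must be shown that $\sum_i d_i\leq 2$. Suppose instead $\sum_i d_i\geq 3$. Since every $d_i\leq 2$, this forces either $k\geq 3$, or $k=2$ with (say) $d_1=2$ and $d_2\geq 1$. In the first case $C_{p_1}\times C_{p_2}\times C_{p_3}$ is an epimorphic image of $G/\frat(G)$, and hence of $G$; in the second case $C_{p_1}\times C_{p_1}\times C_{p_2}$ is. By Lemma~\ref{quozienti}, the claim follows once it is shown that, for distinct primes $p,q,r$, $\rg(C_p\times C_q\times C_r)$ contains an induced $P_4$, and that, for distinct primes $p,q$, so does $\rg(C_p\times C_p\times C_q)$.

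For the first group, identify $C_p\times C_q\times C_r$ with $\mathbb{Z}/pqr\mathbb{Z}$: two elements $x,y$ are adjacent in the generating graph exactly when $\gcd(x,y,pqr)=1$, that is, when no prime in $\{p,q,r\}$ divides both $x$ and $y$. I would then take the four elements $pq$, $r$, $p$, $qr$ in this cyclic order. The three consecutive pairs $\{pq,r\}$, $\{r,p\}$, $\{p,qr\}$ share no common prime divisor and so are edges, whereas $\gcd(pq,p)=p$, $\gcd(pq,qr)=q$ and $\gcd(r,qr)=r$ show that $\{pq,p\}$, $\{pq,qr\}$ and $\{r,qr\}$ are non-edges --- exactly the adjacency pattern of an induced $P_4$.

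For the second group, write $C_p\times C_p\times C_q=V\times C$ with $V=\FF_p^2$ and $C=\mathbb{Z}/q\mathbb{Z}$. Since $|V|$ and $|C|$ are coprime, $\gen{(v_1,c_1),(v_2,c_2)}=V\times C$ if and only if $v_1,v_2$ span $V$ (equivalently, are linearly independent) and $(c_1,c_2)\neq(0,0)$. Fixing linearly independent $u,w\in V$, I would take the four elements $(u,0)$, $(w,1)$, $(u,1)$, $(w,0)$, which are pairwise distinct because $u\neq w$ and $0\neq 1$ in $C$. Each of the three consecutive pairs has linearly independent first coordinates and second coordinates not both zero, and so is an edge; the pairs $\{(u,0),(u,1)\}$ and $\{(w,1),(w,0)\}$ have proportional first coordinates, while $\{(u,0),(w,0)\}$ has both second coordinates zero, so all three are non-edges. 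Again this is an induced $P_4$.

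I do not expect a serious obstacle. Once the Sylow decomposition is set up the argument is a routine verification, and the only point that needs a little care is the case analysis in the first paragraph --- namely, that a $2$-generated finite nilpotent group whose Frattini quotient has at least three prime factors (counted with multiplicity) must have $C_p\times C_q\times C_r$ or $C_p\times C_p\times C_q$ as an epimorphic image. The two four-element configurations are the real content of the proof, and both are checked by hand.
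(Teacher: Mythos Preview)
Your proof is correct and follows essentially the same approach as the paper: reduce via Lemma~\ref{quozienti} to a small abelian quotient and exhibit an explicit induced $P_4$ there. The only difference is cosmetic: the paper handles both cases at once by taking $X=\langle x_1\rangle\times\langle x_2\rangle\times\langle x_3\rangle$ with $|x_i|=p_i$ and $p_3\notin\{p_1,p_2\}$ (allowing $p_1=p_2$), and checks that $(x_1,1,1),(1,x_2,x_3),(x_1,1,x_3),(1,x_2,1)$ is a $P_4$ in $\Gamma(X)$ regardless of whether $p_1=p_2$; your second configuration is exactly this path when $p_1=p_2$, while your first is a different but equally valid $P_4$ in the squarefree case.
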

\begin{proof}
	Assume that $|G/\frat(G)|$ is divisible by $p_1p_2p_3$, with $p_1, p_2, p_3$ prime numbers. Since $d(G)\leq 2,$ it cannot be $p_1=p_2=p_3,$ so we may assume $p_3\notin \{p_1, p_2\}.$ Consider $X=\langle x_1 \rangle \times \langle x_2 \rangle \times
	\langle x_3 \rangle$ with $|x_i|=p_i$ for $1\leq i \leq 3.$ It can be easily checked that $(x_1,1,1), (1, x_2,x_3),$ $(x_1,1,x_3),$ $(1,x_2,1)$ is
	a four-vertex path in $\Gamma(X).$ Since $X$ is an epimorphic image of $G,$  Lemma \ref{quozienti} would imply that $\Gamma(G)$ is not a cograph.
\end{proof}

\begin{lemma}\label{semidiretto}
	Let $H$ be a 2-generated finite soluble group and $V$ a non-trivial irreducible $H$-module. Assume that  there exist $a,b \in H$ such that
	\begin{enumerate}
		\item $H=\langle a, b \rangle$;
		\item $H \neq \langle a\rangle,  H \neq \langle b\rangle;$
		\item $a \notin C_H(V), b \notin C_H(V).$
	\end{enumerate}
	Consider the semidirect product $G=V\rtimes H.$ If no complemented  chief factor of $H$ is $H$-isomorphic to $V$, then $\Gamma(G)$ contains a subgraph isomorphic to the four-vertex path $P_4.$
\end{lemma}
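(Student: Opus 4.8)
The plan is to realise the promised $P_4$ explicitly inside $G=V\rtimes H$, writing $V$ additively as an $H$-module. I would first record when a pair generates $G$: for $h_1,h_2\in H$ and $v_1,v_2\in V$, one has $\gen{v_1h_1,v_2h_2}=G$ if and only if $\gen{h_1,h_2}=H$ and $\gen{v_1h_1,v_2h_2}$ is not a complement of $V$ in $G$. Indeed $\gen{v_1h_1,v_2h_2}$ always projects onto $\gen{h_1,h_2}V/V$ in $G/V\cong H$, and once that image is all of $H$ its intersection with $V$ is an $H$-submodule, hence — by irreducibility of $V$ — either $0$, giving a complement, or $V$, giving $G$.

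Next I would describe the complements of $V$ in $G$. They correspond to the derivations $H\to V$, and the $V$-conjugates of $H$ are exactly those coming from inner derivations; since $a\notin C_H(V)$ by hypothesis (3) we have $C_V(H)=0$, so there are exactly $|V|$ of these. The chief-factor assumption is used precisely here: because $H$ is soluble and no complemented chief factor of $H$ is $H$-isomorphic to $V$, we get $H^1(H,V)=0$, i.e. every complement of $V$ in $G$ is a $V$-conjugate of $H$ (a standard fact about complements in soluble groups). Combining this with the first step: if $\gen{h_1,h_2}=H$, then $\gen{v_1h_1,v_2h_2}=G$ unless $v_1=(h_1-1)u$ and $v_2=(h_2-1)u$ for some $u\in V$.

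For the construction, set $g_1=a$ and $g_4=b$; since $\gen{a,b}=H\neq G$, the vertices $g_1,g_4$ are non-adjacent. I then look for $v_2,v_3\in V\setminus\{0\}$ and put $g_2=v_2b$, $g_3=v_3a$. Then $\gen{g_1,g_3}$ and $\gen{g_2,g_4}$ project onto $\gen{a}$ and $\gen{b}$, which are proper in $H$ by hypothesis (2), so $g_1,g_3$ and $g_2,g_4$ are non-adjacent; and (using $v_2,v_3\neq0$ and $a\neq b$, the latter again from (2)) the four elements are pairwise distinct. It remains to make $\{g_1,g_2\},\{g_2,g_3\},\{g_3,g_4\}$ edges, which by the previous paragraph means $v_2\notin(b-1)C_V(a)$, $v_3\notin(a-1)C_V(b)$, and no $u\in V$ satisfies $(a-1)u=v_3,\ (b-1)u=v_2$ (and then $v_2,v_3\neq0$ is automatic, as $0$ lies in the first two sets). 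Since $C_V(a),C_V(b)$ are proper by hypothesis (3) and $C_V(a)\cap C_V(b)=C_V(H)=0$, the sets $(b-1)C_V(a)$ and $(a-1)C_V(b)$ have sizes $<|V|$. If $b-1$ is not surjective I choose $v_2\notin\mathrm{im}(b-1)$: then the third condition is vacuous and $v_2\notin(b-1)C_V(a)\subseteq\mathrm{im}(b-1)$ automatically, so any $v_3$ outside the proper subspace $(a-1)C_V(b)$ works. If $b-1$ is bijective then $C_V(b)=0$, so $(a-1)C_V(b)=0$, and $|V|\geq3$ (a $1$-dimensional $\mathbb F_2$-module carries no nontrivial action, while $b$ acts nontrivially); choosing $v_2\notin(b-1)C_V(a)$, the unique $u_0$ with $(b-1)u_0=v_2$ gives the single forbidden value $(a-1)u_0$ for $v_3$, and $(a-1)u_0\neq0$ (else $v_2\in(b-1)C_V(a)$), so some $v_3\notin\{0,(a-1)u_0\}$ exists. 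In all cases $(g_1,g_2,g_3,g_4)$ induces a $P_4$ in $\rg(G)$ (and in particular $G$ is $2$-generated).

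The step that requires real input is the second one — converting ``$V$ is not a complemented chief factor of $H$'' into ``all complements of $V$ in $G$ are conjugate'', equivalently $H^1(H,V)=0$; this is where solubility of $H$ is essential. Once the complements are pinned down, the remainder is a finite selection of $v_2,v_3$, the only mild subtlety being the case where $b-1$ (or, symmetrically, $a-1$) has small image, which is handled by choosing $v_2$ outside $\mathrm{im}(b-1)$ rather than merely outside $(b-1)C_V(a)$.
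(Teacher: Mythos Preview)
Your proof is correct and shares the paper's overall architecture: the same candidate path $(a,\,v_2b,\,v_3a,\,b)$, the same reduction via the chief-factor hypothesis to the statement that all complements of $V$ in $G$ are $V$-conjugate (this is exactly Gasch\"utz's theorem \cite{g1}, which the paper also invokes), and hence the same criterion that $\gen{v_1h_1,v_2h_2}=G$ fails only when $(v_1,v_2)$ lies on the ``diagonal'' $\{((h_1-1)u,(h_2-1)u):u\in V\}$.

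Where you diverge is in the existence of suitable $v_2,v_3$. The paper sets $\Omega_a=\{v:\gen{a,bv}=G\}$, $\Omega_b=\{v:\gen{av,b}=G\}$, $\Omega=\{(v_1,v_2):\gen{av_1,bv_2}=G\}$, proves $|\Omega_a|,|\Omega_b|\geq |V|-|V|/p$ and $|\Omega|=|V|^2-|V|$, and argues by pigeonhole that $(\Omega_b\times\Omega_a)\cap\Omega\neq\emptyset$ unless $(p^t-p^{t-1})^2\leq p^t$, forcing $p=2,\ t=2$. The residual case $V\cong C_2\times C_2$ is then dispatched by inspecting $H/C_H(V)\in\{\perm(3),C_3\}$ separately. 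Your approach instead splits on whether $b-1$ is surjective and constructs $v_2,v_3$ directly: if $b-1$ is not onto, take $v_2\notin\mathrm{im}(b-1)$ and the middle edge is free; if $b-1$ is bijective, then $C_V(b)=0$ collapses one constraint and only two values are forbidden for $v_3$, which $|V|\geq 3$ handles. This is a genuinely cleaner route: it is fully constructive, avoids the exceptional analysis of $|V|=4$, and makes transparent exactly how each hypothesis (1)--(3) is consumed. The paper's counting argument, by contrast, gives slightly more quantitative information (lower bounds on the number of good pairs) but at the cost of the $C_2\times C_2$ tail case.
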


\begin{proof}
	Let $|V|=p^t,$ with $p$ a prime. Define
	$$\Omega_a=\{v\in V\mid \langle a, bv\rangle=G\},  \quad \Omega_b=\{v\in V\mid \langle av, b\rangle=G\}.$$
	Assume $v\notin \Omega_a.$ Then $\langle a, bv\rangle$ is a complement of
	$V$ in $G$. The fact that no  complemented  chief factor of $H$ is $H$-isomorphic to $V$ ensures that all the complements of $V$ in $G$ are conjugated (see \cite[Satz 3]{g1}), so there exists $w\in V$ such that
	$(a,bv)=(a^w,b^w).$ In particular $w\in C_V(a)$ and $v=[b,w].$ This implies $|V\setminus \Omega_a|\leq |[b,C_V(a)]|\leq |C_V(a)|.$ Since we are assuming $C_V(a)<V$, we deduce
	\begin{equation}
	\label{omega}|\Omega_a|\geq |V|-|C_V(a)|\geq p^t-p^{t-1}.
	\end{equation}
	For the same reason
	\begin{equation}
	\label{omegb}|\Omega_b|\geq |V|-|C_V(b)|\geq p^t-p^{t-1}.
	\end{equation}
	Let $\Omega=\{(v_1,v_2) \in V^2\mid \langle av_1,bv_2\rangle=G\}.$
	The number of pairs $(v_1,v_2)$ in $V^2\setminus \Omega$ coincides with the number of complements of $V$ in $G$, so
	\begin{equation}\label{omeg}
	|\Omega|=|V^2|-|V|.
	\end{equation}
	If $(v_1,v_2)\in \Omega \cap (\Omega_b\times \Omega_a)$ then
	$(a,bv_2,av_1,b)$ is a four-vertex path in $\Gamma(G).$
	In particular, if $|\Omega_a \times \Omega_b|+|\Omega|>|V|^2,$ then $(\Omega_a \times \Omega_b) \cap \Omega \neq \emptyset,$ and $G$ contains $P_4$. So we may assume
	\begin{equation}\label{cru}|\Omega_a||\Omega_b|\leq |V^2| - |\Omega|=|V|.
	\end{equation}
	In particular it follows from (\ref{omega}),  (\ref{omegb}) and  (\ref{omeg}), that $(p^t-p^{t-1})^2\leq p^t,$ i.e.
	\begin{equation}\label{s3}
	p^t\leq \left(\frac{p}{p-1}\right)^2.
	\end{equation}
	This implies
	$p=2$ and $t=2$, i.e. $V\cong C_2\times C_2$. We have two possibilities:
	
	\noindent a) $H/C_H(V) \cong \gl(2,2) \cong \perm(3)$.
	In this case $G/C_H(V)\cong \perm(4).$ Since 
	$((1,2), (2,3,4), (1,4), (1,2,3))$ is a four-vertex path in $\perm(4),$ the conclusion follows from
	Lemma \ref{quozienti}.
	
	\noindent b) $H/C_H(V) \cong C_3.$ In this case $C_V(a)=C_V(b)=\{0\}$, but then, by (\ref{omega}) and  (\ref{omegb}), $|\Omega_a|,|\Omega_b|\geq 3$, in contradiction with (\ref{cru}).
\end{proof}

\begin{lemma}\label{nonnil}
	Let $G$ be a non-nilpotent 2-generated finite soluble group. If $\Gamma(G)$ is a cograph, then $G/\frat(G) \cong N \rtimes H,$ where $N$ is a faithful irreducible $H$-module and $H$ is cyclic of prime  order.
\end{lemma}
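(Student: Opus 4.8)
The plan is to show that a non-nilpotent, soluble, $2$-generated $G$ with $\Gamma(G)$ a cograph has $G/\frat(G)$ of the prescribed form, by exploiting Lemma \ref{semidiretto} to force a great deal of rigidity. First I would replace $G$ by $\ol G = G/\frat(G)$: since $\frat(G)$ consists of non-generators, an element generates $G$ together with a second element if and only if its image does, so $\Gamma(G)$ and $\Gamma(\ol G)$ have the same edges on the non-Frattini part; more precisely, Lemma \ref{quozienti} already tells us $\Gamma(\ol G)$ is a cograph. So we may assume $\frat(G)=1$. Then $G$ is soluble with trivial Frattini subgroup, hence $G$ has a non-trivial Fitting subgroup $F = \fit(G)$ which is a direct product of its Sylow subgroups, each of which is elementary abelian and complemented (by Gasch\"utz's theorem, since $\frat(G)=1$), and $C_G(F)=F$.

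Next I would use the hypothesis that $G$ is not nilpotent to deduce $F \neq G$. Pick a minimal normal subgroup $N$ of $G$; since $\frat(G)=1$, $N$ is complemented, say $G = N \rtimes H$ with $H$ a proper subgroup. If $N$ were not faithful as an $H$-module, i.e.\ $C_H(N) \neq 1$, I would aim for a contradiction via Lemma \ref{semidiretto} or a direct $P_4$-construction: one wants to produce inside $G$, or inside a quotient of $G$, a configuration $V \rtimes H'$ with $V$ irreducible and elements $a,b$ acting non-trivially on $V$ with $H' = \langle a,b\rangle$ not cyclic, and no complemented chief factor of $H'$ isomorphic to $V$; then Lemma \ref{semidiretto} gives a $P_4$. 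The cleanest route is probably: if $G$ has two distinct minimal normal subgroups, or a minimal normal subgroup $N$ together with a non-central action making $H$ non-cyclic, then $\Gamma(G)$ contains $P_4$. So cographness forces $G$ to have a \emph{unique} minimal normal subgroup $N$, and $H = G/N$ must act faithfully (else $C_G(N) > N$ would contain another minimal normal subgroup, or yield a bad semidirect factor) and $H$ must be cyclic.

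To nail down that $H$ is cyclic \emph{of prime order}: once $H$ is cyclic, write $H = \langle x \rangle$; if $|H|$ were divisible by two primes, or by $p^2$, I would look at the action of $H$ on $N$ and on any chief factor of $H$ to set up Lemma \ref{semidiretto} again — choosing $a, b$ suitable powers of $x$ times elements of $N$, or passing to a quotient where the relevant irreducible module reappears — to exhibit a $P_4$. In fact the hypothesis of Lemma \ref{semidiretto}, that no complemented chief factor of $H$ is $H$-isomorphic to $V=N$, is automatic here because $H = G/N$ acts faithfully on $N$ so $N$ cannot be $H$-isomorphic to a section of $H$ on which $H$ acts with a kernel; and $H = \langle a,b\rangle$ with $a,b$ non-cyclic and non-central is exactly what a non-prime-order cyclic $H$ provides after the right choice. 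Conversely, that the three listed conditions on $a, b$ can be met reduces to standard facts about $2$-generated soluble groups acting on faithful irreducible modules.

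The main obstacle I anticipate is the bookkeeping to guarantee the hypotheses of Lemma \ref{semidiretto} are met in each case — in particular verifying that one can always find $a, b$ generating the relevant complement $H'$ with neither generating $H'$ alone and both acting non-trivially on $V$, and that $V$ does not appear as a complemented chief factor of $H'$. The faithfulness of the $H$-action on the unique minimal normal subgroup is what makes this work, so the key lemma to prove en route is: \emph{if $\Gamma(G)$ is a cograph and $\frat(G)=1$, then $G$ has a unique minimal normal subgroup $N$, $C_G(N)=N$, and $G/N$ is cyclic}; after that, ruling out $|G/N|$ being a prime power with exponent $\geq 2$ or having two prime divisors is another application of the same circle of ideas, passing to a quotient $V \rtimes \langle x \rangle$ with $x$ of prime order where $V$ is a faithful irreducible and invoking Lemma \ref{semidiretto} once more to get the forbidden $P_4$.
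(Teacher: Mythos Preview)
Your overall strategy matches the paper's: reduce to $\frat(G)=1$ via Lemma \ref{quozienti}, pick a non-central minimal normal subgroup $N$, and use Lemma \ref{semidiretto} to force cyclicity of the complement. The paper does exactly this, first passing to the epimorphic image $N\rtimes (G/C_G(N))$ to get $G/C_G(N)$ cyclic, and then working inside the actual complement $K$ of $N$ in $G$.

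However, your endgame has a concrete gap. You propose to rule out $|H|=p^t$ with $t\geq 2$ (and the two-prime case) by ``another application of the same circle of ideas, passing to a quotient $V\rtimes\langle x\rangle$ with $x$ of prime order \dots and invoking Lemma \ref{semidiretto} once more''. This cannot work, for two reasons. First, once the complement is cyclic, hypotheses (1) and (2) of Lemma \ref{semidiretto} are incompatible: if $\langle a,b\rangle$ is cyclic then one of $a,b$ already generates it, so the lemma simply does not apply to $N\rtimes H$ with $H$ cyclic. Second, a quotient of the form $V\rtimes\langle x\rangle$ with $x$ of prime order and $V$ faithful irreducible is precisely the target structure; by the ``if'' direction of Theorem \ref{gencog} its generating graph \emph{is} a cograph, so no $P_4$ can be found there.

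The paper closes these cases not with Lemma \ref{semidiretto} but with two explicit $P_4$'s built by hand inside $G$ itself. Writing $K=\langle xy\rangle$ with $x$ of $p$-power order and $y\in C_K(N)$ of order coprime to $p$: if $y\neq 1$ then for any $1\neq n\in N$ the path $(n,\,xy,\,ny,\,x)$ is an induced $P_4$; and once $y=1$ and $C_K(N)=1$, if $|x|=p^t$ with $t>1$ then $(x^p,\,x^n,\,x,\,(x^n)^p)$ is an induced $P_4$ (using that $K$ and $K^n$ are distinct maximal subgroups with trivial intersection). These ad hoc constructions are the missing ingredient in your plan; Lemma \ref{semidiretto} alone does not finish the job.
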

\begin{proof}
	Assume that $\Gamma(G)$ is a cograph. Then also $\Gamma(G/\frat(G))$ is a cograph. Moreover $G/\frat(G)$ is not nilpotent (otherwise $G$ would be nilpotent) so it is not restrictive to assume $\frat(G)=1.$ Since $G$ is
	not nilpotent, there exists a minimal normal subgroup of $G$, say $N$,  which is not central in $G$. Set $H=G/C_G(N).$  Then $N$ is a faithful irreducible $H$-module and the semidirect product $N\rtimes H$ is an epimorphic image of $G.$ By Lemma \ref{quozienti}, $\Gamma(N \rtimes H)$ is a cograph, so it follows from Lemma \ref{semidiretto} that $H$ is a cyclic group and consequently $\dim_{\End_H(N)}N=1$. Let $K$ be a complement of
	$N$ in $G.$ Since $G$ is 2-generated and  $\dim_{\End_H(N)}N=1$, it follows from \cite[Satz 4]{g2} that no complemented  chief factor of $K$ is $K$-isomorphic to $N$. Since $G/C_G(N)$ is cyclic, there exists $x \in K$
	such that $K=\langle x, C_K(N)\rangle.$ Moreover, since $K$ is 2-generated, by   Lemma \ref{modg} there exist $c_1,c_2 \in C_K(N)$ such that
	$\langle xc_1, xc_2\rangle=K$. If $K$ is not cyclic, then the two elements $a=xc_1$ and $b=xc_2$ satisfy the assumptions of Lemma
	\ref{semidiretto}. But this would imply that $G$ is not a cograph, a contradiction.
	With a similar argument we can prove that $K/C_K(N)$ is a $p$-group. Indeed assume $|K/C_K(N)|=rs$ with $r,s \geq 2$ and $(r,s)=1.$ There exist $y_1, y_2 \in K$ such that $\langle y_1, y_2\rangle=K$, $|y_1C_K(N)|=r$ and $|y_2C_K(N)|=s.$ We take $y_1, y_2$ in the role of $a, b$ in Lemma \ref{semidiretto} and we deduce that $G$ is not a cograph. 
	So we may assume $K=\langle xy \rangle$ where $|x|$ is a $p$-power,
	$y \in C_K(N)$ and $(|y|,p)=1.$ If $y\neq 1,$ then, for any
	$1\neq n \in N,$ $(n,xy,ny,x)$ is a four-vertex path in $\Gamma(G).$ So $y=1$ and $K$ is a cyclic $p$-group. In particular $C_K(N)\leq \frat(K).$ However $\frat(K)\cap C_K(N)\leq \frat(G)=1,$ so we deduce that $C_K(N)=1.$ We have so proved that $K=\langle x \rangle$ is cyclic of order $p^t$, for some $t \in \mathbb N$ and $N$ is a faithful irreducible $K$-module. In particular $K$ acts fixed-point-freely on $N$. Choose $1\neq n\in N.$ Then $K$ and $K^n$ are two maximal subgroups of $G$ with trivial
	intersection. If $t>1,$ then
	$(x^p,x^n,x,(x^n)^p)$ is a four-vertex path in $\Gamma(G)$. Since $\Gamma(G)$ is a cograph we conclude $t=1.$
\end{proof}

\begin{proof}[Proof of Theorem \ref{gencog}] Assume that $\Gamma(G)$ is a cograph. If $G$ is nilpotent then, by Lemma \ref{abelian}, $G/\frat(G)$ is either a $p$-group or a cyclic group of order $p_1p_2$, where $p_1$ and $p_2$ are two different primes. In the first case $G$ is a $p$-group, in the second $G$ is a cyclic group and $p_1, p_2$ are the only prime divisors of $|G|.$ If $G$ is not nilpotent, then, by Lemma \ref{nonnil},
	$G/\frat(G) \cong V \rtimes \langle x \rangle$ where $x$ has prime order
	and $V$ is a faithful irreducible $\langle x \rangle$-module.
	
	Conversely we have to prove that if $G$ satisfies (1), (2) or (3), then $\Gamma(G)$ is a cograph. If $(g_1,g_2,g_3,g_4)$ is a four-vertex path in $\Gamma(G)$, then either $(g_1\frat(G),$ $g_2\frat(G), g_3\frat(G), g_4\frat(G))$ is a four-vertex path in $\Gamma(G/\frat(G))$ or there exist $1\leq i < j \leq 4$ with $g_i\frat(G)=g_j\frat(G).$ However the second possibility cannot occur, indeed there exists $k \in \{1,2,3,4\} \setminus \{i,j\}$ such that
	$g_k$ is adjacent to $g_j$ but not to $g_i.$ This implies $G=\langle g_k, g_j\rangle=\langle g_k, g_j, \frat(G) \rangle  = \langle g_k, g_i, \frat(G) \rangle < G,$ a contradiction. Therefore it suffices to prove that in our hypotheses and under the additional assumption  $\frat(G)=1,$ the generating graph $\Gamma(G)$ is a cograph. Assume by contradiction that $(g_1,g_2,g_3,g_4)$ is a  four-vertex path in $\Gamma(G)$. We have the following possibilities.
	
	\noindent a) $G \cong C_p.$  There exists $i\in \{1,2,3,4\}$ such that
	$|g_i|=p,$ but then $g_i$ is adjacent to $g_j$ for any $j\neq i,$ a contradiction.
	
	\noindent b) $G \cong C_p \times C_p.$  In this case $|g_1|=|g_2|=|g_3|=|g_4|=p.$ Since $g_1$ and $g_3$ are not adjacent in $\Gamma(G),$ $\langle g_1 \rangle=\langle g_3 \rangle.$ Moreover, since $g_3$ and $g_4$ are  adjacent $\Gamma(G),$ $\langle g_3 \rangle\neq \langle g_4 \rangle.$ But then $\langle g_1 \rangle \neq \langle g_4 \rangle$ and $g_1$ and $g_4$ are adjacent in $\Gamma(G),$ a contradiction.
	
	\noindent c) $G \cong C_{p_1} \times C_{p_2},$ with $p_1\neq p_2.$ 
	There is no  $i\in \{1,2,3,4\}$ such that
	$|g_i|=p_1p_2,$ since this would imply  $g_i$  adjacent to $g_j$ for any $j\neq i.$ It is not restrictive to assume $|g_1|=p_1.$ This would imply $|g_2|=p_2, |g_3|=p_1, |g_4|=p_2$, and consequently that $g_1$ and $g_4$ are adjacent, a contradiction.
	
	\noindent d) $G/\frat(G) \cong V \rtimes \langle x \rangle$ where $x$ has
	order $p$
	and $V$ is a faithful irreducible $\langle x \rangle$-module. 
	There exists a prime $q\neq p$ such that $V$ is an elementary abelian $q$-group and a non-trivial element $g$ of $G$ has either order $p$ or order
	$q.$ Assume that $|g_1|=p.$ Then $\langle g_1 \rangle$ is the unique maximal subgroup of $G$ containing $g_1.$ Since $g_3$ and $g_4$ are not adjacent to $g_1,$ we must have $g_3,g_4 \in \langle g_1 \rangle$, but then $g_3,g_4$ are not adjacent in $\Gamma(G).$ So $|g_1|=q.$ For the same reason $|g_4|=q$ and consequently $|g_2|=|g_3|=p.$ But this would imply that $g_2$ and $g_4$ are adjacent.
\end{proof}

\section{Perfect graphs}

In the following, we will denote with $Y$ the following graph:
\begin{center}
	\begin{tikzpicture}
	[scale=.6,auto=left,every node/.style={circle,fill=black!20}]
	\node (n1) at (5,3.5)  {$x_2$};
	\node (n2) at (5,6.5) {$x_1$};
	\node (n3) at (7,5)  {$x_3$};
	\node (n5) at (9,5) {$x_4$};
	\foreach \from/\to in {n1/n2,n1/n3,n2/n3,n5/n3}
	\draw (\from) -- (\to);
	\end{tikzpicture}
\end{center}

\begin{thm}\cite[Theorem 3.2]{rav}\label{rv}
	The tensor product $\Gamma_1 \wedge \Gamma_2$ is perfect if and only if  either
	\begin{itemize}
		\item $\Gamma_1$ or $\Gamma_2$ is bipartite, or
		\item both $\Gamma_1$ or $\Gamma_2$ do not contain and odd $n$-hole with
		$n\geq 5,$ or $Y$ as an induced subgraph.
	\end{itemize}
\end{thm}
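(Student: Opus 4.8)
The plan is to derive the statement from the strong perfect graph theorem, so that perfectness of $\Gamma_1\wedge\Gamma_2$ becomes the absence of induced odd holes and induced odd antiholes, and then to use two explicit constructions of odd holes together with one structural observation about complete multipartite graphs.

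\emph{The ``only if'' direction, by contraposition.} Suppose neither $\Gamma_1$ nor $\Gamma_2$ is bipartite and that one of them, say $\Gamma_1$, contains an induced paw $Y$ or an induced odd hole of length at least $5$. First, if some factor --- say $\Gamma_1$ --- contains an induced odd hole $C_m$ with $m\geq 5$, then $\Gamma_2$, being non-bipartite, contains an induced odd cycle $C_n$ with $n\geq 3$, and $\Gamma_1\wedge\Gamma_2$ contains $C_m\wedge C_n$ as an induced subgraph. Inside $C_m\wedge C_n$ I would exhibit an induced cycle of length $\max(m,n)$: assuming $n\geq m$, index $C_n$ by $\mathbb{Z}/n$ and choose a closed walk $z_0,\dots,z_{n-1}$ of length $n$ in $C_m$ (this exists since $m,n$ are both odd and $n\geq m$); then the vertices $(z_0,0),\dots,(z_{n-1},n-1)$ have pairwise distinct second coordinates and induce a copy of $C_n$, an odd hole of length $\max(m,n)\geq 5$, so the product is imperfect. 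Otherwise no factor has an odd hole of length $\geq 5$; then $\Gamma_1$ contains a paw, hence a triangle, and $\Gamma_2$ is non-bipartite with no long odd hole, hence also contains a triangle, so $\Gamma_1\wedge\Gamma_2$ contains $Y\wedge K_3$. One checks by hand that $Y\wedge K_3$ has an induced $C_5$: labelling the paw so that $1,2,3$ is its triangle and $4$ its pendant vertex (attached to $3$), and the triangle by $a,b,c$, the five vertices $(4,a),(3,b),(1,c),(2,b),(3,c)$ induce a $C_5$. In every case $\Gamma_1\wedge\Gamma_2$ has an induced odd hole, hence is not perfect.

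\emph{The ``if'' direction.} If $\Gamma_1$ (say) is bipartite, then colouring $(u,v)$ by the colour of $u$ shows $\Gamma_1\wedge\Gamma_2$ is bipartite, hence perfect. Otherwise both factors are paw-free with no odd hole of length $\geq 5$; by Olariu's characterization of paw-free graphs, every component of each $\Gamma_i$ is either triangle-free --- and then, having no odd cycle at all, bipartite --- or complete multipartite. Since the tensor product distributes over disjoint unions of its factors, $\Gamma_1\wedge\Gamma_2$ is a disjoint union of graphs $A\wedge B$ with each of $A,B$ bipartite or complete multipartite; those with a bipartite factor are bipartite, and for the rest it suffices to prove that $K_{a_1,\dots,a_k}\wedge K_{b_1,\dots,b_\ell}$ is perfect. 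Its complement $R$ has edge set the union of the two equivalence relations ``same first part'' and ``same second part'' on the product vertex set. Every induced subgraph of $R$ is again a union of two equivalence relations, and for $2k+1\geq 5$ neither $C_{2k+1}$ nor $\overline{C_{2k+1}}$ is such a graph: $C_{2k+1}$ is triangle-free but not $2$-edge-colourable, while $\overline{C_{2k+1}}$ has clique number $k$, so two equivalence relations could carry at most $2k(k-1)$ of its $(2k+1)(k-1)$ edges. Hence $R$ has no induced odd hole and no induced odd antihole, so by the strong perfect graph theorem $R$ is perfect, and therefore so is $K_{a_1,\dots,a_k}\wedge K_{b_1,\dots,b_\ell}$. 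A disjoint union of perfect graphs being perfect, this completes the proof.

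I expect the crux to be the last step: identifying the complement of a tensor product of complete multipartite graphs as a union of two equivalence relations, and then verifying perfectness of such graphs through the strong perfect graph theorem and the transitivity and edge-counting observations above. The two explicit odd-hole constructions (the induced $C_n$ in $C_m\wedge C_n$ and the induced $C_5$ in $Y\wedge K_3$) are routine but must be arranged so that the chosen vertex sets really do span induced subgraphs.
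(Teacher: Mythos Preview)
The paper does not prove this theorem; it is quoted from \cite{rav} and used as a black box. The only related content in the paper is Remark~\ref{yk3}, which exhibits an explicit $5$-hole in $Y\wedge K_3$, essentially the same construction you found.

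Your proof is correct. Two minor remarks. In the first half of the ``only if'' direction you write ``assuming $n\geq m$'', but the hypotheses give $m\geq 5$ while only $n\geq 3$; you should note that the case $n<m$ follows by the symmetric construction (a closed walk of length $m$ in $C_n$, giving an induced $C_m$), or simply by $C_m\wedge C_n\cong C_n\wedge C_m$. Second, your argument rests on the strong perfect graph theorem and on Olariu's decomposition of paw-free graphs, both of which postdate \cite{rav} by decades; the 1977 proof therefore necessarily proceeds along different lines, though your route is perfectly valid as a modern proof. The final step --- recognising the complement of a tensor product of complete multipartite graphs as the union of two equivalence relations, then excluding $C_{2k+1}$ via the chromatic index and $\overline{C_{2k+1}}$ via the edge count $2k(k-1)<(2k+1)(k-1)$ --- is a clean argument.
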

\begin{rem}\label{yk3}Let $\Gamma_1\cong Y$ be a graph with vertex-set $\{x_1,x_2,x_3,x_4\}$ and $\Gamma_2 \cong K_3$ be a complete graph with
	vertex-set $\{y_1,y_2,y_3\}.$ Then $$((x_1,y_1), (x_2,y_3), (x_3,y_1), (x_4,y_2), (x_3,y_3))$$ is a 5-hole in the tensor product $\Gamma_1 \wedge \Gamma_2.$
\end{rem}
\begin{lemma}\label{qfra} Let $\frat(G)$ be the Frattini subgroup of a finite group $G.$ 
	Then  $\Gamma(G)$ is perfect if and only if $\Gamma(G/\frat(G))$ is perfect.
\end{lemma}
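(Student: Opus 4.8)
The plan is to exhibit $\Gamma(G)$ as the graph obtained from $\Gamma(G/\frat(G))$ by a vertex–substitution operation, and then to invoke the classical fact that substituting perfect graphs into the vertices of a perfect graph again yields a perfect graph.

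Write $F=\frat(G)$, $\ov G=G/F$ and $m=|F|$. The only group–theoretic input is that $F$ consists of nongenerators, so that for $g_1,g_2\in G$ one has $\langle g_1,g_2\rangle=G$ if and only if $\langle g_1F,g_2F\rangle=\ov G$. Grouping the vertices of $\Gamma(G)$ into the $|\ov G|$ cosets of $F$, I would deduce from this: (a) for distinct cosets $xF\neq yF$, either every element of $xF$ is adjacent in $\Gamma(G)$ to every element of $yF$, or no element of $xF$ is adjacent to any element of $yF$, according to whether $xF$ and $yF$ are adjacent in $\Gamma(\ov G)$; and (b) inside a single coset $xF$, all pairs of distinct elements are edges of $\Gamma(G)$ when $\langle xF\rangle=\ov G$ and no pair is an edge otherwise. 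Equivalently: $\Gamma(G)$ is precisely the graph obtained from $\Gamma(\ov G)$ by replacing each vertex $xF$ by the complete graph $K_m$ if $\langle xF\rangle=\ov G$, and by the edgeless graph $\ov{K_m}$ otherwise, the coset $xF$ being the set of $m$ vertices substituted for $xF$.

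Granting this description, both implications are short. Since $K_m$ and $\ov{K_m}$ are perfect and a substitution of perfect graphs into the vertices of a perfect graph is again perfect (the replication/substitution lemma, applied once for each vertex of $\Gamma(\ov G)$), perfectness of $\Gamma(\ov G)$ forces perfectness of $\Gamma(G)$. Conversely, choosing a transversal of $F$ in $G$ realises $\Gamma(\ov G)$ as an induced subgraph of $\Gamma(G)$, so perfectness of $\Gamma(G)$ forces perfectness of $\Gamma(\ov G)$.

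I do not expect a genuine obstacle: the substance of the argument is the structural description in (a)--(b), which is routine given $F\leq\frat(G)$, after which everything is standard. If one prefers to avoid quoting the substitution lemma, it can be replaced by a direct appeal to the strong perfect graph theorem \cite{crst}: two vertices lying in a common substituted coset are twins of $\Gamma(G)$ (false twins if that coset was replaced by $\ov{K_m}$, true twins if by $K_m$), while an induced odd hole or odd antihole of length at least $5$ contains no two such twins; hence any odd hole or odd antihole of $\Gamma(G)$ meets each coset in at most one vertex and so already occurs in $\Gamma(\ov G)$, whereas conversely every induced subgraph of $\Gamma(\ov G)$ occurs in $\Gamma(G)$. (Alternatively, one may first delete from $\Gamma(G)$ and from $\Gamma(\ov G)$ the vertices $g$ with $\langle gF\rangle=\ov G$, which by (a)--(b) form a clique each of whose vertices is adjacent to everything else; removing such universal vertices preserves and reflects perfectness, and what remains of $\Gamma(G)$ is the lexicographic product of what remains of $\Gamma(\ov G)$ with $\ov{K_m}$, which is perfect if and only if its first factor is.)
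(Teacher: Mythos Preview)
Your proposal is correct. The primary route you take is genuinely different from the paper's: you recognise $\Gamma(G)$ as the graph obtained from $\Gamma(G/F)$ by substituting a clique or a coclique of size $|F|$ at each vertex, and then invoke Lov\'asz's substitution lemma together with the induced-subgraph argument via a transversal. This is more structural and, notably, avoids the strong perfect graph theorem altogether. The paper instead appeals directly to the strong perfect graph theorem: it shows that an odd $m$-hole or $m$-antihole in $\Gamma(G)$ with $m\geq 5$ must hit each coset of $F$ in at most one vertex (since for $i\neq j$ there is always a $k$ adjacent to one of $x_i,x_j$ but not the other), hence passes to $\Gamma(G/F)$, and conversely lifts. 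That is precisely your stated alternative via the twins observation. So the paper's argument is the shorter one once SPGT is granted, while your main argument buys independence from SPGT at the cost of citing the substitution lemma.
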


\begin{proof}
	By the strong graph perfect theorem, it suffices to prove that, when $m\geq 5,$
	$\Gamma(G)$ contains an $m$-hole or an $m$-antihole if and only if so does $\Gamma(G/\frat(G)).$	Since $\langle g_1\frat(G), g_2\frat(G)\rangle=G/\frat(G)$ if and only if $\langle g_1, g_2 \rangle=G,$ if the subset $\{x_1\frat(G),\dots,x_m\frat(G)\}$ induces an $m$-hole or an $m$-antihole in $\Gamma(G/\frat(G))$, then so does
	$\{x_1,\dots,x_m\}$ in $\Gamma(G).$ Conversely, assume that
	$\{x_1,\dots,x_m\}$ induces an $m$-hole or an $m$-antihole in $\Gamma(G).$ If $1\leq i< j\leq m,$ then there exists $k\in \{1,\dots,m\}\setminus \{i,j\}$ such that $x_k$ is adjacent to $x_i$ but not to $x_j$, in particular $x_i\frat(G)\neq x_j \frat(G)$ and $\{x_1\frat(G),\dots,x_m\frat(G)\}$ induces an $m$-hole or an $m$-antihole in $\Gamma(G/\frat(G)).$
\end{proof}

Let $I_n=\{1,\dots,n\}$ and consider the graph $\Delta_n$ whose vertices are the subsets of $I_n$ and where $J_1$ and $J_2$ are adjacent if and only if $J_1\cup J_2=I_n.$

\begin{lemma}\label{deltan}
	The graph $\Delta_n$ is perfect if and only if $n\leq 4.$
\end{lemma}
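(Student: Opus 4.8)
The plan is to analyze $\Delta_n$ directly as a combinatorial object and pin down exactly when an odd hole or odd antihole can occur, invoking the strong perfect graph theorem. First I would record the easy structural facts: the empty set $\emptyset$ is an isolated vertex, the full set $I_n$ is adjacent to everything, and in general $J_1 \sim J_2$ iff the complements $I_n\setminus J_1$ and $I_n\setminus J_2$ are disjoint. Passing to complements, $\Delta_n$ is isomorphic to the graph on all subsets of $I_n$ with adjacency ``disjointness'', which is a more familiar object (a ``Kneser-type'' graph, but on all subsets rather than fixed-size ones). I would work with whichever description is more convenient.

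For the ``if'' direction ($n\le 4$), the cleanest route is probably to exhibit $\Delta_n$ for $n\le 4$ as a graph built from perfect pieces by perfection-preserving operations, or simply to check small cases. For $n\le 2$ the graph is tiny. For $n=3$ and $n=4$ I would try to present $\Delta_n$ as (an induced subgraph of) a tensor product or a join/substitution of smaller perfect graphs, so that Theorem~\ref{rv} or the standard closure properties of perfect graphs apply; failing a slick decomposition, a finite verification that no $5$-, $7$-, $\dots$-hole or odd antihole fits inside a $16$-vertex graph is acceptable. The point is that for $n\le 4$ the subsets are too few and too ``spread out'' in their union-pattern to close up a long induced odd cycle.

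For the ``only if'' direction ($n\ge 5$ forces imperfection), I would exhibit an explicit $5$-hole in $\Delta_5$, and then note it persists in $\Delta_n$ for all $n\ge 5$ by the obvious embedding $\Delta_5 \hookrightarrow \Delta_n$ that fixes a $5$-element subset of $I_n$ and adjoins the remaining $n-5$ points to every set (this sends unions onto unions and non-unions onto non-unions, hence is an induced-subgraph embedding). To find the $5$-hole in $\Delta_5$: working in the complement (disjointness) picture, I want five sets $A_1,\dots,A_5$ with $A_i\cap A_{i+1}=\emptyset$ cyclically and $A_i\cap A_{i+2}\ne\emptyset$; a natural guess is the ``pentagon'' assignment $A_i = \{i,\ i+2\}$ with indices mod $5$, since consecutive pairs $\{i,i+2\}$ and $\{i+1,i+3\}$ are disjoint while pairs two apart share an element. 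Translating back via $J_i = I_5\setminus A_i$ gives the desired $5$-hole in $\Delta_5$ itself. Alternatively, one can shortcut this by combining Remark~\ref{yk3} with a map from a tensor product $Y\wedge K_3$ (or similar) into $\Delta_n$, which also transports the $5$-hole.

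The main obstacle I expect is the ``if'' direction, specifically $n=4$: ruling out \emph{all} odd holes and \emph{all} odd antiholes in the $16$-vertex graph $\Delta_4$ by hand is the delicate part, because antiholes of length $7,9,\dots$ are harder to see directly than holes. The cleanest resolution would be a genuine decomposition of $\Delta_4$ into perfect pieces (e.g.\ recognizing it as a tensor product or a lexicographic/substitution composition of small perfect graphs and appealing to Theorem~\ref{rv} or the replacement lemma for perfect graphs); if such a decomposition is not available, one falls back on a careful case analysis organized by the sizes of the subsets involved, using that any two adjacent vertices have union $I_4$ so at least one of them has size $\ge 2$, which sharply limits the adjacency structure.
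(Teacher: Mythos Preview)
Your $n\ge 5$ argument is exactly the paper's: the sets $J_i = I_5\setminus\{i,i+2\}$ (indices mod~$5$), padded with $\{6,\dots,n\}$, are literally the five subsets the paper writes down.

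For $n\le 4$, however, your preferred route via Theorem~\ref{rv} is a dead end, and it is worth seeing why. The natural splitting $I_4=\{1,2\}\sqcup\{3,4\}$ does \emph{not} give $\Delta_4\cong\Delta_2\wedge\Delta_2$ as simple graphs: e.g.\ $\{1,2,3\}$ and $\{1,2,4\}$ are adjacent in $\Delta_4$ but correspond to pairs with the same first coordinate $\{1,2\}$, hence non-adjacent in the loopless tensor product. Worse, $\Delta_2$ is itself isomorphic to the graph $Y$ (triangle on $\{1\},\{2\},\{1,2\}$ with pendant $\emptyset$), so Theorem~\ref{rv} would point the wrong way even if a tensor description were available. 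No useful product or substitution decomposition of $\Delta_4$ presents itself here; the paper does not attempt one.

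Your fallback---case analysis by cardinality---is precisely the paper's proof, and it is short enough that you should simply carry it out rather than leave it as a contingency. In any odd hole or antihole one first excludes $I_n$ (universal), then $\emptyset$ (isolated), then every singleton $\{i\}$ (its only proper neighbour is $I_n\setminus\{i\}$). This leaves at most $2^n-n-2$ candidate vertices, so $m\ge 5$ forces $n=4$ and all vertices have size $2$ or $3$. The four size-$3$ subsets are pairwise adjacent, so an $m$-hole uses at most two of them (consecutively); hence it contains three size-$2$ subsets forming an induced $P_3$, impossible since each size-$2$ set has a unique size-$2$ neighbour (its complement). For an $m$-antihole, there are only four size-$3$ subsets so some vertex has size $2$; it must be adjacent to $m-3$ of the others, but it has only three proper neighbours in $\Delta_4$ altogether, forcing $m=5$, and a $5$-antihole is a $5$-hole.
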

\begin{proof}
	If $n\geq 5,$ then $
	(\{1,2,4,6,\dots,n\}, \{1,3,5,6,\dots,n\},$ $\{2,4,5,6,\dots,n\},$\linebreak $\{1,3,4,6,\dots,n\},$  $\{2,3,5,6,\dots,n\})$ is a 5-hole in $\Delta_n$ so $\Delta_n$ is not perfect. We may assume $n\leq 4.$ Let $m\geq 5$ be an odd integer
	and assume that $X$ is a subset of the vertex-set of $\Delta_n$ inducing an  $m$-hole or an $m$-antihole. Clearly $I_n \notin  X.$ As a consequence, $\emptyset\notin X.$ Moreover if $\{i\}$ is a singleton, then $I_n\setminus\{i\}$ is the unique proper subset of $I_n$ adjacent to $\{i\}$, so $\{i\}\notin X$. So we have at most $2^n-n-2$ possible choices for an element of $X$. This implies that $n=4$ and $X$ consists of sets of cardinality 2 or 3. Since $I_4$ contains only four subsets of cardinality 3, it is not restrictive to assume $(1,2)\in X.$ Note that
	a subset of cardinality 3 is adjacent to all the other subsets of cardinality 3. So if $X$ induces an $m$-hole, then $X$ contains at most 2 (adjacent) subsets of cardinality 3. This implies that $X$ contains at least 3 subsets of cardinality 2, inducing a 3-vertex path. But this is impossible since  a subset of cardinality 2 is adjacent to only one subset of cardinality 2. If $X$ induces an $m$-antihole, then it contains at least one subset of cardinality 2,
	say $Y$, and this must be adjacent to other $m-3$ elements of $X$. However
	there are is a unique subset of cardinality  2 and  two subsets of cardinality 3 adjacent to $Y$, hence $m-3\leq 3.$
	But this implies $m=5$ and we may exclude this possibility since a $5$-antihole is isomorphic to a $5$-hole.
\end{proof}

\begin{lemma}\label{unicomax}
	Let $g\in G$ be an element which is contained in a unique maximal subgroup of $G$.  Then $g$ cannot be the vertex of an $m$-hole or $m$-antihole in $\rg(G)$ with $m\geq 5$.
\end{lemma}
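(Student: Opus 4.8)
The plan is to argue by contradiction, exploiting the very strong adjacency property that characterizes vertices of holes and antiholes. Suppose $g$ lies in a unique maximal subgroup $M$ of $G$, and suppose $g$ is one of the vertices $x_1,\dots,x_m$ inducing an $m$-hole or $m$-antihole in $\rg(G)$ with $m\geq 5$; say $g=x_1$. The first observation is the same one used implicitly in the proof of Lemma \ref{qfra}: in an $m$-hole or $m$-antihole with $m\geq 5$, every vertex $x_i$ has the property that for every $j\neq i$ there is some $k\notin\{i,j\}$ adjacent to exactly one of $x_i,x_j$. In particular, if $x_1=g$ is adjacent to some $x_j$, then $\langle g,x_j\rangle=G$; but $g\in M$, so $x_j\notin M$. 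Conversely, if $x_j\notin M$, I would like to say $x_1$ and $x_j$ are forced to be adjacent — this is the crux.

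The key step is therefore: \emph{if $h\notin M$, then $\langle g,h\rangle=G$.} Indeed, $\langle g,h\rangle$ is a subgroup containing $g$, so it lies in some maximal subgroup, or equals $G$; but the only maximal subgroup containing $g$ is $M$, and $h\notin M$, so $\langle g,h\rangle$ cannot be contained in any maximal subgroup, forcing $\langle g,h\rangle=G$. Hence $g$ is adjacent in $\rg(G)$ to \emph{every} vertex outside $M$ and to \emph{no} vertex inside $M$ (other than possibly itself, which is irrelevant). In graph-theoretic terms, the closed neighbourhood structure of $g$ is determined by a single "cut": $N(g)=V\setminus (M\cup\{g\})$ within the vertex set, up to the behaviour inside $M$.

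Now I restrict attention to the induced subgraph on $\{x_1,\dots,x_m\}$. Partition the remaining vertices $x_2,\dots,x_m$ into $A=\{x_i: x_i\notin M\}$ and $B=\{x_i: x_i\in M, i\geq 2\}$. By the above, $x_1$ is adjacent to all of $A$ and to none of $B$. If the induced subgraph is an $m$-hole ($m\geq 5$), then $x_1$ has exactly two neighbours among $x_2,\dots,x_m$, so $|A|=2$ and $|B|=m-3\geq 2$. But $x_1$ is non-adjacent to every vertex of $B$, and in a chordless cycle the two non-neighbours of $x_1$ on either side of the cycle are themselves... here I unwind the cycle: labelling the hole as $x_1-y_1-y_2-\cdots-y_{m-1}-x_1$, the neighbours of $x_1$ are $y_1$ and $y_{m-1}$, so $A=\{y_1,y_{m-1}\}$ and $B=\{y_2,\dots,y_{m-2}\}$. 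But $y_1\in A$ means $y_1\notin M$, while $y_2\in B$ means $y_2\in M$, and yet $y_1\sim y_2$ forces $\langle y_1,y_2\rangle=G$; there is no immediate contradiction here, so instead I use that $y_1$ and $y_2$ are adjacent but $x_1$ distinguishes nothing extra — I must push harder: consider $y_2$ and $y_{m-2}$, both in $B\subseteq M$, hence $\langle y_2,y_{m-2}\rangle\leq M\neq G$, so $y_2\not\sim y_{m-2}$, which is consistent only if they are non-consecutive, fine — the genuine contradiction comes from counting: all of $y_2,\dots,y_{m-2}$ lie in $M$, so they form an induced path of length $m-3\geq 2$ all of whose vertices are pairwise non-adjacent unless consecutive, yet any two of them generate a subgroup of $M$, hence are never adjacent; so the path $y_2-y_3-\cdots-y_{m-2}$ has no edges at all, contradicting that it is a path with $m-3\geq 2$ vertices (which must have at least one edge when $m\geq 5$). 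For the antihole case, pass to the complement: $\overline{x_1}$ then lies in a unique maximal subgroup in the complementary sense and the same cut argument shows the complement of the induced subgraph, which is a hole, cannot exist — more directly, in an $m$-antihole with $m\geq5$, $x_1$ is \emph{non}-adjacent to exactly two vertices, so $|B|=2$ and $|A|=m-3\geq 2$, and now all of $A$ lies outside $M$, but in an antihole the non-neighbours of $x_1$ are the two vertices $x_j$ with $\langle g,x_j\rangle\neq G$, i.e. $x_j\in M$; the complement graph on $A\cup\{x_1\}$ is then a path on $m-2\geq 3$ vertices with $x_1$ an endpoint, and in the complement two vertices of $A$ are adjacent iff they do \emph{not} generate $G$ together — but both lie outside $M$, and there is no reason two elements outside $M$ fail to generate $G$... so the cleanest route for the antihole is to invoke that a $5$-antihole is a $5$-hole (so $m=5$ reduces to the hole case) and for $m\geq 7$ use the complement trick uniformly. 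The main obstacle, then, is packaging the hole/antihole case analysis uniformly; I expect the cleanest write-up to treat the hole case directly as above and reduce the antihole case either to $m=5$ (already a hole) or to a symmetric statement after complementation, noting that $g$ being contained in a unique maximal subgroup is exactly the condition that makes $g$'s neighbourhood a "bipartite cut", and a bipartite cut vertex cannot sit inside any chordless cycle of length $\geq 5$ (since one side of the cut would contain an induced path of length $\geq 2$ with no edges).
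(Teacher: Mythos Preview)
Your hole argument is correct and, once stripped of the detours, is exactly the paper's proof: with $g=a_1$, the non-neighbours $a_3,a_4$ both lie in $M$ (since $M$ is the unique maximal subgroup through $g$, non-adjacency to $g$ forces membership in $M$), yet $a_3\sim a_4$ in the hole, contradicting $\langle a_3,a_4\rangle\leq M$. You reach this, but only after an unnecessary excursion through the whole path $y_2,\dots,y_{m-2}$.

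The antihole case, however, is not completed. You correctly identify that $g=x_1$ has exactly two non-neighbours in the antihole, and that these two vertices (namely $x_2$ and $x_m$ in the cyclic labelling) lie in $M$. At that point the contradiction is immediate: in an $m$-antihole with $m\geq 5$, the vertices $x_2$ and $x_m$ are \emph{adjacent} (they are at cyclic distance $2$, not $1$), yet both lie in $M$, so $\langle x_2,x_m\rangle\leq M\neq G$. This is precisely the paper's two-line argument. Instead you turn to the set $A$ of vertices outside $M$, try a ``complement trick'', observe that it does not obviously work (two elements outside $M$ may well fail to generate $G$), and then leave the case to an unspecified complementation argument. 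That route does not close: the complement of $\Gamma(G)$ is not a generating graph, so the ``unique maximal subgroup'' hypothesis has no clean translation. The clean abstract statement underlying both cases is that the non-neighbours of $g$ form an independent set in $\Gamma(G)$; a vertex with this property cannot sit in an $m$-hole (two consecutive non-neighbours would be adjacent) or an $m$-antihole (the two non-neighbours are adjacent) for $m\geq 5$. You had this property in hand---use it directly on $B$ rather than on $A$.
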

\begin{proof}Let $M\leq G$ be the unique maximal subgroup containing $g$.
	\begin{itemize}
		\item Let $(g,a_2,\dots,a_m)$ be an $m$-hole. We have $g\nsim a_3,a_4$, which implies $a_3,a_4\in M$, so they cannot be adjacent in $\Gamma(G),$ a contradiction.
		\item Let $(g,a_2,\dots,a_m)$ be an $m$-antihole. We have $g\nsim a_2,a_m$, which implies $a_2,a_m\in M$, so they cannot be adjacent in $\Gamma(G),$ a contradiction.\qedhere
	\end{itemize}
\end{proof}

\begin{lemma}\label{aiaj}
	Let $m\geq5$ and suppose $(a_1,\dots,a_m)$ is an $m$-hole or an $m$-antihole in $\rg(G)$. If $\gen{a_i}=\gen{a_j}$, then $i=j$.
\end{lemma}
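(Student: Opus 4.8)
The plan is to use the elementary observation that two distinct elements generating the same cyclic subgroup are \emph{twins} in $\Gamma(G)$ (they have the same neighbours, apart possibly from each other), together with the fact that a chordless cycle of length at least $5$, and also the complement of such a cycle, contains no twins.

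First I would record the twin property. Suppose $\gen{a_i}=\gen{a_j}$ but $i\neq j$; since the $m$ vertices of a hole or of an antihole are pairwise distinct, $a_i\neq a_j$ as elements of $G$. For every $g\in G$,
\[
\gen{a_i,g}=\gen{\gen{a_i},g}=\gen{\gen{a_j},g}=\gen{a_j,g},
\]
so $\gen{a_i,g}=G$ if and only if $\gen{a_j,g}=G$. Hence, writing $X=\{a_1,\dots,a_m\}$, for every $g\in X\setminus\{a_i,a_j\}$ the vertex $g$ is adjacent to $a_i$ in $\Gamma(G)$ exactly when it is adjacent to $a_j$; the same statement then holds in the complement $\overline{\Gamma(G)[X]}$.

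Next I would reduce to a graph-theoretic triviality. Put $H=\Gamma(G)[X]$ if $(a_1,\dots,a_m)$ is an $m$-hole and $H=\overline{\Gamma(G)[X]}$ if it is an $m$-antihole; in either case $H$ is a chordless cycle $C_m$ with $m\geq 5$, and by the previous paragraph $N_H(a_i)\setminus\{a_j\}=N_H(a_j)\setminus\{a_i\}$. It then suffices to observe that in $C_m$ with $m\geq 5$ no two distinct vertices $u,v$ satisfy $N(u)\setminus\{v\}=N(v)\setminus\{u\}$: if $u\not\sim v$ this forces $N(u)=N(v)$, so the two cycle-neighbours of $u$ coincide with those of $v$, whence $m\mid 4$; if $u\sim v$, then $N(u)\setminus\{v\}$ and $N(v)\setminus\{u\}$ are the singletons formed by the remaining neighbours, and their equality forces $m\mid 3$. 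Both contradict $m\geq 5$, so $i=j$.

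I do not expect a genuine obstacle. The only points needing a little care are handling the hole and the antihole cases simultaneously --- done via the complement trick, since the complement within $X$ of an $m$-antihole is again a $C_m$ --- and noting that the cycles excluded by $m\geq 5$ are exactly $C_3$ and $C_4$, which is precisely why the hypothesis is $m\geq 5$ rather than $m\geq 4$.
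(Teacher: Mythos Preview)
Your proof is correct and follows essentially the same idea as the paper's: both exploit that $\gen{a_i}=\gen{a_j}$ forces $a_i$ and $a_j$ to have identical adjacencies to the remaining vertices, and then observe that a chordless cycle $C_m$ with $m\geq 5$ admits no such pair of twin vertices. Your version is slightly more polished in that you unify the hole and antihole cases via the complement and prove the no-twins fact for $C_m$ in general, whereas the paper treats the two cases in parallel by picking the specific vertex $a_m$ (adjacent to $a_1$ in the hole, non-adjacent in the antihole) and deriving the bound $m\leq 3$ directly.
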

\begin{proof}
	Let $i\neq j$ and $\gen{a_i}=\gen{a_j}$. We can assume without loss of
	generality that $i=1$ and $2\leq j\leq \frac{m+1}{2}$. If $(a_1,\dots,a_m)$ is an $m$-hole, then $a_m\sim a_1$, and this implies $a_m\sim a_j$ and consequently $j=m-1$. But then $m-1\leq\frac{m+1}{2}$, hence $m\leq3$, a contradiction. If $(a_1,\dots,a_m)$ is an $m$-antihole, then $a_m\nsim
	a_1$, and so $a_m\nsim a_j$ and we argue as before.
\end{proof}

\subsection{Nilpotent groups}
The aim of this subsection is to prove Theorem \ref{nilperf}.
First we prove the statement in the particular case when $G$ is cyclic.
\begin{lemma}\label{ciclo}
	Let $G$ be a finite cyclic group. Then $\Gamma(G)$ is perfect if and only
	if $|G|$ is divisible by at most four different primes.
\end{lemma}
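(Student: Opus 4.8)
The plan is to reduce the problem to the combinatorial graph $\Delta_n$ analyzed in Lemma \ref{deltan}, via a tensor (categorical) product decomposition of $\Gamma(G)$ for cyclic $G$. Write $|G| = p_1^{e_1} \cdots p_n^{e_n}$ with the $p_i$ distinct primes. First I would observe that, by Lemma \ref{qfra}, we may replace $G$ by $G/\frat(G) \cong C_{p_1} \times \cdots \times C_{p_n}$, so it suffices to treat the case where $|G|$ is squarefree, the product of $n$ distinct primes. For such $G$, an element $g$ corresponds to a tuple recording, for each $i$, whether its $p_i$-component is trivial or not; equivalently $g$ determines the subset $J(g) = \{i : g_i \neq 1\} \subseteq I_n$, and two elements $g, h$ generate $G$ if and only if $J(g) \cup J(h) = I_n$. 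Thus $\Gamma(G)$ is a ``blow-up'' of $\Delta_n$: it is obtained from $\Delta_n$ by replacing each vertex $J$ with an independent set of size $\prod_{i \in J}(p_i - 1)$ and joining two blown-up vertices completely exactly when the original vertices are adjacent in $\Delta_n$ (with no edges inside a block, since if $J(g) = J(h)$ then $J(g) \cup J(h) = J(g) \neq I_n$).

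The key point is then that blowing up vertices by independent sets preserves perfection in both directions: a graph $\Gamma$ is perfect if and only if every such independent-set blow-up of $\Gamma$ is perfect, and conversely $\Gamma$ is an induced subgraph of any blow-up in which every block is nonempty, so it inherits perfection. (This is the standard ``replication lemma'' of Lov\'asz, or one can cite the strong perfect graph theorem directly: a blow-up of $\Gamma$ contains an odd hole or odd antihole as an induced subgraph if and only if $\Gamma$ does, since contracting each block to one vertex does not create or destroy such configurations — an odd hole/antihole meets each block in at most one vertex because the block is independent and, in an antihole of length $\geq 5$, any two non-adjacent vertices have a common non-neighbour, which two vertices of the same block cannot.) Granting this, $\Gamma(G)$ is perfect if and only if $\Delta_n$ is perfect, which by Lemma \ref{deltan} holds if and only if $n \leq 4$. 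Translating back through $\frat(G)$, this says $\Gamma(G)$ is perfect if and only if $|G|$ has at most four distinct prime divisors.

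The main obstacle I anticipate is making the blow-up/replication argument fully rigorous — specifically, verifying cleanly that an odd hole or odd antihole of length $m \geq 5$ in $\Gamma(G)$ cannot use two vertices from the same $\Delta_n$-block. For odd holes this is immediate from independence of the block together with the fact that an $m$-hole ($m \geq 4$) has no two vertices with the same neighbourhood outside $\{$themselves$\}$; here one can simply invoke Lemma \ref{aiaj}, since two elements in the same block of a squarefree cyclic group generate the same cyclic subgroup only when... — in fact one should be slightly careful, as distinct elements with $J(g) = J(h)$ need \emph{not} generate the same subgroup, so instead I would argue directly: if $a_i, a_j$ lie in the same block then $N(a_i) \setminus \{a_j\} = N(a_j) \setminus \{a_i\}$ in $\Gamma(G)$, and an elementary case check (exactly as in Lemma \ref{aiaj}, using $a_m \sim a_1$ for a hole and $a_m \nsim a_1$ for an antihole) forces $i = j$. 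Once this is in place, the induced subgraph obtained by picking one representative from each block that is actually used is an $m$-hole or $m$-antihole in $\Delta_n$, and conversely any $m$-hole or $m$-antihole of $\Delta_n$ lifts verbatim to $\Gamma(G)$ by choosing, for each vertex $J$, a generator of $\prod_{i \in J} C_{p_i}$; so the equivalence with Lemma \ref{deltan} is complete.
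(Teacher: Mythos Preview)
Your proposal is correct and follows essentially the same route as the paper: reduce to squarefree order via Lemma~\ref{qfra}, associate to each element its set of ``live'' prime coordinates, observe that adjacency in $\Gamma(G)$ becomes the union-is-everything relation, and invoke Lemma~\ref{deltan}.

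One small point worth correcting: your caution that distinct $g,h$ with $J(g)=J(h)$ ``need not generate the same subgroup'' is misplaced. In a cyclic group of squarefree order, $J(g)=J(h)$ forces $|g|=|h|$, and a cyclic group has a unique subgroup of each order, so in fact $\langle g\rangle=\langle h\rangle$. Hence Lemma~\ref{aiaj} applies directly to give $\pi_i\neq\pi_j$ for $i\neq j$, which is exactly how the paper argues. Your alternative neighbourhood argument is also valid, just unnecessary here.
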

\begin{proof}
	By Lemma \ref{qfra}, we may assume $\frat(G)=1$, so $|G|=p_1\cdots p_t$ where $p_1,\dots,p_t$ are distinct primes. Assume that $(a_1,\dots,a_m)$ is an $m$-hole or an $m$-antihole in $\Gamma(G).$ Let $\pi=\{p_1,\dots,p_t\}$ and for any $i\in \{1,\dots,t\}$, let $\pi_i$ be the set of prime divisors of $|a_i|$. By Lemma \ref{aiaj}, if $i\neq j$, then $\pi_i \neq \pi_j$, moreover
	$a_i$ and $a_j$ are adjacent in $\Gamma(G)$ if and only if $\pi_i \cup \pi_j=\pi.$ This implies that $\Gamma(G)$ is perfect if and only if $\Delta_t$ is perfect, and the conclusion follows from Lemma \ref{deltan}.
\end{proof}

The proof of the general case requires some preliminary lemmas and remarks.

\begin{rem}\label{y1}
	Let $p$ and $q$ be two different primes. If $P=\langle a_1, a_2 \rangle$ is a finite, 2-generated, non-cyclic $p$-group and $Q=\langle b_1, b_2 \rangle$ is a finite, 2-generated, non-cyclic $q$-group, then $\Gamma(P\times Q)$ contains an induced subgroup isomorphic to $Y:$
	\begin{center}
		\begin{tikzpicture}
		[scale=.6,auto=left]
		\node (n1) at (5,3.5)  {\tiny{$(a_1,b_1)$}};
		\node (n2) at (5,6.5) {\tiny{$(a_2,b_2)$}};
		\node (n3) at (7,5)  {\tiny{$(a_1a_2,b_1b_2)$}};
		\node (n5) at (10,5) {\tiny{$(a_1,b_2)$}};
		\foreach \from/\to in {n1/n2,n1/n3,n2/n3,n5/n3}
		\draw (\from) -- (\to);
		\end{tikzpicture}
	\end{center}
\end{rem}

\begin{rem}\label{y2}
	If $P=\langle a_1, a_2 \rangle$ is a finite, 2-generated, non-cyclic finite $p$-group and $C=\langle x \rangle$ is a non-trivial finite cyclic group whose order is not divisible by $p$, then $\Gamma(P\times C)$ contains an induced subgroup isomorphic to $Y:$
	\begin{center}
		\begin{tikzpicture}
		[scale=.6,auto=left]
		\node (n1) at (5,3.5)  {\tiny{$(a_1,1)$}};
		\node (n2) at (5,6.5) {\tiny{$(a_2,x)$}};
		\node (n3) at (7,5)  {\tiny{$(a_1a_2,x)$}};
		\node (n5) at (10,5) {\tiny{$(a_2,1)$}};
		\foreach \from/\to in {n1/n2,n1/n3,n2/n3,n5/n3}
		\draw (\from) -- (\to);
		\end{tikzpicture}
	\end{center}
\end{rem}

\begin{rem}\label{k3}
	If $G$ is a 2-generated finite group of order at least 3, then $\Gamma(G)$ contains an induced subgraph isomorphic to $K_3.$ In particular $\Gamma(G)$ is not a bipartite graph.
\end{rem}
\begin{proof}
	If $G=\langle a, b\rangle$ is not cyclic, then we can take the subgraph
	of $\Gamma(G)$ induced by $\{a,b,ab\}.$ If $G=\langle x\rangle,$ we can
	take the subgroup induced by $\{1,x,x^{-1}\}.$
\end{proof}

\begin{lemma}\label{necc}
	Let $G$ be a 2-generated finite nilpotent group. If $\Gamma(G)$ is perfect, then the order of $G/\frat(G)$ is the product of at most four (not necessarily distinct) primes.
\end{lemma}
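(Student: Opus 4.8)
The plan is to reduce to the Frattini-free case and then invoke the structure theory of finite nilpotent groups together with the tensor-product machinery already set up. By Lemma \ref{qfra} we may assume $\frat(G)=1$; since $G$ is $2$-generated and nilpotent, $G=P_1\times\cdots\times P_k$ is the direct product of its Sylow subgroups, and each $P_i$ is an elementary abelian $p_i$-group of rank $r_i\in\{1,2\}$ for distinct primes $p_1,\dots,p_k$. Hence $|G|=\prod_i p_i^{r_i}$ is the product of exactly $\sum_i r_i$ primes, and the assertion is equivalent to: $\sum_i r_i\ge 5$ implies $\Gamma(G)$ is not perfect. I will use throughout that if $\gcd(|A|,|B|)=1$ then a pair of elements generates $A\times B$ exactly when its two projections generate $A$ and $B$, so that $\Gamma(A\times B)=\Gamma(A)\wedge\Gamma(B)$; this is what lets Theorem \ref{rv} enter.

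If $G$ is cyclic then every $r_i=1$, so $\sum_i r_i=k$ and the statement is precisely Lemma \ref{ciclo}. So assume $G$ is not cyclic (some $r_i=2$) and that $\sum_i r_i\ge 5$. As each $r_i\le 2$, this forces $k\ge 3$. The core step is to split $\{P_1,\dots,P_k\}$ into two nonempty blocks $\mathcal A,\mathcal B$, with $A=\prod_{i\in\mathcal A}P_i$ and $B=\prod_{i\in\mathcal B}P_i$, so that (i) $A$ is the direct product of a non-cyclic Sylow with a non-trivial group of coprime order, and (ii) $|B|\ge 3$. Relabelling so that $P_1$ is non-cyclic, first try $\mathcal A=\{P_1,P_2\}$, $\mathcal B=\{P_3,\dots,P_k\}$: then (i) holds ($p_1\ne p_2$, and $P_1$ non-cyclic), and (ii) holds unless $\mathcal B=\{P_3\}$ with $P_3\cong C_2$. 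In that exceptional case $k=3$ and $r_1+r_2\ge 4$, so $r_1=r_2=2$ and $P_2$ is also non-cyclic; then take instead $\mathcal A=\{P_1,P_3\}$, $\mathcal B=\{P_2\}$, for which both (i) and (ii) clearly hold (here $|B|=p_2^{\,2}\ge 9$).

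With the partition fixed, $A$ is a direct product of a non-cyclic $p$-group and a non-trivial group of order prime to $p$, so $\Gamma(A)$ contains an induced copy of $Y$ by Remark \ref{y1} (if the complementary factor is non-cyclic) or Remark \ref{y2} (if it is cyclic). Since also $|A|,|B|\ge 3$, Remark \ref{k3} shows that neither $\Gamma(A)$ nor $\Gamma(B)$ is bipartite. Now $G=A\times B$ with $\gcd(|A|,|B|)=1$, so $\Gamma(G)=\Gamma(A)\wedge\Gamma(B)$, and Theorem \ref{rv} applies with neither factor bipartite and $\Gamma(A)$ containing $Y$: hence $\Gamma(G)$ is not perfect. (Equivalently, $\Gamma(A)\supseteq Y$ and $\Gamma(B)\supseteq K_3$ together with Remark \ref{yk3} exhibit an explicit $5$-hole in $\Gamma(G)$.) This contradiction gives $\sum_i r_i\le 4$.

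The only real obstacle — past correctly invoking the nilpotent structure theory — is the bookkeeping in the partition step, namely checking that the naive split can always be repaired when the complementary block degenerates to $C_2$; this is exactly the configuration $G\cong (C_p\times C_p)\times(C_q\times C_q)\times C_2$ with $p,q$ distinct odd primes, which is disposed of by the alternative split above.
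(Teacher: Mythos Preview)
Your argument has a genuine gap: the identity $\Gamma(A\times B)=\Gamma(A)\wedge\Gamma(B)$ for coprime $|A|,|B|$ is \emph{not} true in general. If $B$ is cyclic with generator $b$ and $\langle a_1,a_2\rangle=A$, then $(a_1,b)$ and $(a_2,b)$ are adjacent in $\Gamma(A\times B)$ but not in $\Gamma(A)\wedge\Gamma(B)$ (the tensor product of simple graphs has no loops, so two vertices with equal second coordinate are never adjacent there). The paper makes exactly this point in the proof of Theorem~\ref{nilperf}. Consequently neither Theorem~\ref{rv} nor Remark~\ref{yk3} transfers a $5$-hole to $\Gamma(G)$ automatically: the five vertices of Remark~\ref{yk3} include the pairs $(x_1,y_1),(x_3,y_1)$ and $(x_2,y_3),(x_3,y_3)$, which become adjacent in $\Gamma(G)$ as soon as $y_1$ (resp.\ $y_3$) generates $B$.

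Your partition does not avoid this. If $P_1$ is the \emph{only} non-cyclic Sylow subgroup (so $k\ge 4$ and $r_i=1$ for $i\ge 2$, e.g.\ $G\cong C_p^2\times C_q\times C_r\times C_s$), then every nontrivial coprime splitting $G=A\times B$ has one cyclic factor, so no amount of repartitioning helps. Even in your $k=3$ case, you only repartition when $P_3\cong C_2$; for $P_3\cong C_{p_3}$ with $p_3$ odd you leave $B\cong C_{p_3}$ cyclic of prime order, where no $K_3$ in $\Gamma(B)$ can have both $y_1$ and $y_3$ non-generators. The paper's proof handles the cyclic complement $K$ separately: it passes to the induced subgraph of $\Gamma(G)$ on $P\times\Omega$, where $\Omega$ is the set of non-generators of $K$, observes that \emph{this} induced subgraph really is $\Gamma(P)\wedge\Lambda$ (with $\Lambda$ the subgraph of $\Gamma(K)$ induced on $\Omega$), and then exhibits an induced $Y$ inside $\Lambda$ using that $|K|$ has at least three prime divisors. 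Your write-up needs an analogous step.
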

\begin{proof}
	By Lemma \ref{qfra} we may assume $\frat(G)=1.$ For any prime divisor $p$ of $|G/\frat(G)|$, the Sylow $p$-subgroup of $G$ is either cyclic of order $p$ or elementary abelian of order $p^2.$ If all the Sylow subgroups of $G$ are cyclic, then $G$ is cyclic and the conclusion follows from Lemma \ref{ciclo}. So we may assume that $G$ contains a non-cyclic Sylow $p$-subgroup, say $P$, of order $p^2.$ Let $K$ be a complement of $P$ in $G.$ Assume, by contradiction, that
	$|K|$ is the product of at least three primes. If $K$ is not cyclic, then $K=Q_1\times Q_2 \times H$ where $Q_1, Q_2$ are Sylow subgroups, $Q_1$ is non-cyclic and $Q_2\neq 1.$ 
	By Remarks \ref{k3} and \ref{y2}, $\Gamma(P\times Q_2)$ and  $\Gamma(Q_1\times H)$ contain an induced subgraph isomorphic, respectively,  
	to $Y$ and $K_3.$ But then we deduce from Remark \ref{yk3} that
	$\Gamma(G) \cong \Gamma(P\times Q_2) \wedge \Gamma(Q_1\times H)$ is not perfect.
	So we may assume that $K=\langle x \rangle$ and that $|x|$ is divisible by at least three different primes $q_1, q_2, q_3.$
	Let $\Omega$ be the set of the vertices $y$ of $\Gamma(K)$ with the property that $\langle y \rangle \neq K$ and let $\Lambda$ be the subgraph of $\Gamma(K)$ induced by $\Omega.$ Notice that the subgroup of $\Gamma(G)$ induced by the subset $P \times \Omega$ is isomorphic with $\Gamma(P) \wedge \Lambda$ and that $\{ x^{q_1},x^{q_2},x^{q_3},x^{q_1q_2}\}$ induces a subgraph of $\Lambda$ isomorphic to $Y$. But then, again by Remark \ref{yk3}, 
	$\Gamma(P) \wedge \Lambda$, and consequently $\Gamma(G)$, contains a 5-hole.
\end{proof}

\begin{lemma}\label{pp}
	Let $G$ be a non-cyclic 2-generated finite $p$-group. Then $\Gamma(G)$ is perfect and does not contain an induced subgroup isomorphic to $Y$.
\end{lemma}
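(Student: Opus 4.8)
The plan is to show that in a non-cyclic $2$-generated finite $p$-group $G$, the generating graph $\Gamma(G)$ is perfect by directly ruling out odd holes and odd antiholes of length $\geq 5$, and simultaneously ruling out the induced subgraph $Y$. The key observation is that, by the Burnside basis theorem, $G/\frat(G)$ is elementary abelian of rank $2$, so $G$ has exactly $p+1$ maximal subgroups, all of index $p$, and an element $g$ generates $G$ together with $h$ if and only if $g$ and $h$ lie in no common maximal subgroup. Thus adjacency in $\Gamma(G)$ is governed entirely by the images in the $2$-dimensional $\FF_p$-vector space $V = G/\frat(G)$: writing $\bar g$ for the image of $g$, two non-central elements $g,h$ are adjacent precisely when $\bar g, \bar h$ are linearly independent (equivalently $\bar h \notin \langle \bar g\rangle$), and the identity together with elements of $\frat(G)$ are isolated vertices.

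First I would reduce, via Lemma \ref{qfra}, to the case $\frat(G)=1$, so $G \cong C_p \times C_p$ and $\Gamma(G)$ has a completely transparent structure: its non-isolated part consists of $p^2-1$ non-identity vertices partitioned into $p+1$ classes (the $p-1$ generators of each of the $p+1$ subgroups of order $p$), two vertices being adjacent iff they lie in different classes. This is exactly a complete multipartite graph $K_{p-1,\dots,p-1}$ on $p+1$ parts, together with one isolated vertex (the identity). Complete multipartite graphs are perfect — indeed their complements are disjoint unions of cliques, which are trivially perfect, and perfection is preserved under complementation by the weak perfect graph theorem (or one can check directly that $\chi$ equals the clique number $p+1$ on every induced subgraph). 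Moreover a complete multipartite graph contains no induced $P_4$ hence no induced $Y$ (which contains $P_4$), and no induced cycle of length $\geq 4$ other than $C_4$ when there are $\geq 2$ parts of size $\geq 2$; in any case no odd hole or odd antihole of length $\geq 5$. This settles the Frattini-quotient case for both assertions.

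For the general case, I would argue that the map $g \mapsto \bar g$ transports the hole/antihole question down to $\Gamma(G/\frat(G))$: if $\{x_1,\dots,x_m\}$ induces an $m$-hole, $m$-antihole, or a copy of $Y$ in $\Gamma(G)$, then — as in the proof of Lemma \ref{qfra} and the final paragraph of the proof of Theorem \ref{gencog} — for any $i\neq j$ there is a third vertex adjacent to exactly one of $x_i, x_j$, forcing $\bar x_i \neq \bar x_j$, so the images induce an isomorphic subgraph in $\Gamma(G/\frat(G)) = \Gamma(C_p\times C_p)$. Since we have just shown that graph contains no $5$-hole, no odd hole or antihole of length $\geq 5$, and no induced $Y$, the same holds for $\Gamma(G)$, and by the strong perfect graph theorem $\Gamma(G)$ is perfect. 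The one point requiring a little care — the main (minor) obstacle — is verifying that $Y$ genuinely cannot be an induced subgraph of a complete multipartite graph: in $Y$ the vertex $x_4$ has degree $1$ (adjacent only to $x_3$) while $x_1$ and $x_2$ are each adjacent to $x_3$ but not to $x_4$, so $x_1, x_4$ lie in a common part, $x_2, x_4$ lie in a common part, hence $x_1, x_2$ lie in a common part, contradicting the edge $x_1 x_2$; I would spell out this two-line argument explicitly.
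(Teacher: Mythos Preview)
Your argument is correct in outline but contains one small inaccuracy and differs in strategy from the paper.

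The inaccuracy is in your reduction step for $Y$: you claim that for any $i\neq j$ among the four vertices of $Y$ there is a third vertex adjacent to exactly one of $x_i,x_j$. This fails for the pair $x_1,x_2$: in $Y$ the vertex $x_3$ is adjacent to both and $x_4$ to neither, so no third vertex distinguishes them. The conclusion $\bar x_1\neq\bar x_2$ is nonetheless immediate, since $x_1\sim x_2$ means $\langle x_1,x_2\rangle=G$, whence $\langle\bar x_1,\bar x_2\rangle=G/\frat(G)\cong C_p\times C_p$ is non-cyclic and the images must differ. With this one-line patch (or, more uniformly, with the observation that $\bar x_i=\bar x_j$ would force $x_i,x_j$ to be non-adjacent \emph{and} to have identical neighbourhoods, a configuration that $Y$ does not contain), your reduction goes through and the rest of your argument is sound.

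Your route is genuinely different from the paper's. The paper does not pass to the quotient at all for the perfection claim: it simply observes that every non-isolated vertex $g$ of $\Gamma(G)$ lies in a \emph{unique} maximal subgroup (since $g\frat(G)$ spans a unique line in the $2$-dimensional space $G/\frat(G)$), and then invokes Lemma~\ref{unicomax} to rule out all $m$-holes and $m$-antiholes with $m\geq 5$ in one stroke. For the $Y$-freeness, the paper argues directly that $x_4\not\sim x_1$ and $x_4\not\sim x_2$ force $\langle\bar x_1\rangle=\langle\bar x_4\rangle=\langle\bar x_2\rangle$, contradicting $x_1\sim x_2$. This is shorter and avoids the weak perfect graph theorem and the structural identification with a complete multipartite graph. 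Your approach, on the other hand, yields the pleasant global description of $\Gamma(C_p\times C_p)$ as an isolated vertex together with $K_{p-1,\dots,p-1}$ on $p+1$ parts, from which both conclusions (and indeed $P_4$-freeness, hence the cograph property) drop out simultaneously; the cost is the extra reduction step and the appeal to complementation of perfect graphs.
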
	
\begin{proof}	
	We have $G/\frat(G)\cong C_p\times C_p.$	 If $g$ is a non-isolated vertex of $\Gamma(G)$, then $|g\frat(G)|=p$ and $\langle g \rangle$ is the unique maximal subgroup of $G$
	containing $g.$ It follows from Lemma \ref{unicomax} that $\Gamma(G)$ contains no $m$-hole or $m$-antihole with $m\geq 5$, so it follows from the strong perfect graph theorem that $\Gamma(G)$ is perfect. Now assume by contradiction that $\{g_1,g_2,g_3,g_4\}$	induces a subgraph of $\Gamma(G)$
	isomorphic to $Y.$ We may order these four vertices in such a way that $g_1$ and $g_2$ are adjacent while  $g_4$ is not adjacent neither to $g_1$ nor to $g_2.$  The latter condition implies  $\langle g_4\frat(G) \rangle = \langle g_1\frat(G) \rangle
	= \langle g_2\frat(G) \rangle,$ in contradiction with $\langle g_1, g_2\rangle=G.$
\end{proof}

\begin{proof}[Proof of Theorem \ref{nilperf}]
	By Lemma \ref{qfra} we may assume that $\frat(G)=1.$ By Lemma \ref{necc}, the condition that $|G|$ is the product of at most 4 primes is necessary for $\Gamma(G)$ being perfect. We have to prove that this condition is also sufficient. By Lemma \ref{ciclo}, we may assume that $G$ is not cyclic. This means that $G=P \times K,$
	where $P \cong C_p\times C_p$ for a suitable prime $p$ and $K$ is a nilpotent group whose order is coprime with $p$ and is the product of at most
	two primes. By Lemma \ref{pp}, we may assume $K\neq 1.$
	
	If $K$ is not cyclic, then $K\cong C_q \times C_q,$ for a prime $q\neq p$ and $\Gamma(G)\cong \Gamma(P) \wedge \Gamma(Q)$ is perfect, as a consequence of Theorem \ref{rv} and Lemma \ref{pp}.
	
	The previous argument does not work if $K=\langle g\rangle$ is cyclic.
	Indeed it is no more true that $\Gamma(G)\cong \Gamma(P) \wedge \Gamma(K)$. For example, if $P=\langle a_1, a_2\rangle$, then
	$(a_1,g)$ and $(a_2,g)$ are adjacent in $\Gamma(G)$ but not in $\Gamma(P) \wedge \Gamma(K)$. However we can argue in the following way. Assume that $X\subseteq G$ induces an $m$-hole or an $m$-antihole, with $m\geq 5.$ If $K=\langle g \rangle$ and $y \in P,$ then either $(y,g)$ is an isolated vertex of $\Gamma(G)$ (when $y=1$), or $\langle (y,g)\rangle$ is the unique maximal subgroup of $G$ containing $(y,g).$  In both the cases, by the fact that the vertices of an hole or an antihole are not isolated and by Lemma \ref{unicomax},
	it follows that $(y,g) \notin X.$ In particular this excludes that $K$ has prime order (no element of $K$ could belong to $X$), so we remain with
	the case when $K$ is cyclic of order $r\cdot s$, where
	$r$ and $s$ are different primes. In this case consider the subgraph $\Delta$ of $\Gamma(K)$ induced by the elements of $K$ of prime order. From what we said above, it follows that $X$ induces an $m$-hole or $m$-antihole in $\Gamma(G)$ if and only if it induces an $m$-hole or $m$-antihole in $\Gamma(P) \wedge \Delta.$ This would imply that  $\Gamma(P) \wedge \Delta$ is not perfect, and consequently, by
	Theorem \ref{rv} and Lemma \ref{ciclo}, that $\Delta$ contains an induced subgraph isomorphic to $Y.$  So assume by contradiction that $\{g_1,g_2,g_3,g_4\}$	induces a subgraph of $\Delta$ isomorphic to $Y.$ We may order these four vertices in such a way that $g_1$ and $g_2$ are adjacent while  $g_4$ is not adjacent neither to $g_1$ nor to $g_2.$  The latter condition implies  $\langle g_4 \rangle = \langle g_1 \rangle
	= \langle g_2 \rangle,$ in contradiction with $\langle g_1, g_2\rangle=K.$
\end{proof}

\subsection{The dihedral group}
In this subsection we analyse when the dihedral group
\[
D_n=\gen{\rho,\iota\mid \rho^n=\iota^2=1, \rho^\iota=\rho^{-1}}
\]
has a perfect generating graph. We start with a preliminary lemma.

\begin{lemma}\label{c2c2}
	Let $G$ be a finite group and $N\nn G$ \st $G/N\cong C_2\times C_2$. Then $\rg(G)$ has no $m$-antihole, for $m\geq7$.
\end{lemma}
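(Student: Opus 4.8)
Write $G$ with $N\nn G$ and $G/N\cong C_2\times C_2$. Suppose for contradiction that $(a_1,\dots,a_m)$ is an $m$-antihole in $\rg(G)$ with $m\geq 7$; recall this means $a_i\sim a_j$ in $\rg(G)$ precisely when $i$ and $j$ are \emph{not} consecutive modulo $m$ (so each $a_i$ is non-adjacent only to its two cyclic neighbours). The plan is to exploit the fact that $G/N$ has exactly three maximal subgroups, each of index $2$, and that any generating pair of $G$ must project onto a generating pair of $G/N$. For $g\in G$ let $\ov g=gN$; then $\langle g_1,g_2\rangle=G$ forces $\langle \ov g_1,\ov g_2\rangle=C_2\times C_2$, which means $\ov g_1,\ov g_2$ are two of the three non-identity elements of $G/N$ and in particular $\ov g_1\neq \ov g_2$, and neither equals the identity. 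Consequently, if $a_i\sim a_j$ in $\rg(G)$ then $\ov a_i,\ov a_j$ are distinct non-identity elements of $G/N$.

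**Colouring the vertices by their image in $G/N$.** Assign to each vertex $a_i$ of the antihole its image $\ov a_i\in G/N$; since each $a_i$ is a non-isolated vertex it is contained in a generating pair, so $\ov a_i\neq 1$, i.e.\ $\ov a_i$ takes one of three values. Because $m\geq 7>6$, by pigeonhole some value is taken at least three times; but in fact I will argue more sharply. Consider the three ``colour classes'' $V_\alpha=\{i:\ov a_i=\alpha\}$ for $\alpha$ ranging over the three involutions of $G/N$. If $i,j$ lie in the same class, then $\ov a_i=\ov a_j$, so $a_i\not\sim a_j$, so in the antihole $i$ and $j$ are cyclically consecutive. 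Thus each colour class $V_\alpha$ is a set of pairwise-cyclically-consecutive indices in $\mathbb Z/m$, which (since $m\geq 7\geq 5$) forces $|V_\alpha|\leq 2$, and if $|V_\alpha|=2$ the two indices are adjacent on the cycle. Since the three classes partition $\{1,\dots,m\}$ we get $m=|V_1|+|V_2|+|V_3|\leq 6$, contradicting $m\geq 7$.

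**Writing it up.** So the only step requiring care is the claim that a set of pairwise-cyclically-consecutive indices in $\mathbb Z/m$ with $m\geq 5$ has size at most $2$: if $i,j,k$ were three such indices, they would be pairwise consecutive on an $m$-cycle, impossible once $m\geq 4$ since a vertex of $C_m$ has only two neighbours and those two are themselves non-adjacent when $m\geq 5$. I would phrase this directly in terms of the non-adjacency structure of the antihole (two vertices $a_i,a_j$ with $\ov a_i=\ov a_j$ are non-adjacent in $\rg(G)$, hence by the definition of $m$-antihole the corresponding vertices $\overline{a_i},\overline{a_j}$ of the hole $\overline{\rg(G)[X]}\cong C_m$ are adjacent, i.e.\ $|i-j|\equiv\pm1\pmod m$) rather than introducing extra terminology. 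The main — and really the only — obstacle is bookkeeping: making sure the direction of the equivalence ``same image $\Rightarrow$ consecutive in the hole'' is stated correctly for antiholes (where non-edges of $\rg(G)$ correspond to edges of the complementary $C_m$), and handling the boundary case $m=7$ where the bound $m\leq 6$ is already violated so no finer analysis is needed. A clean one-paragraph proof results:

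\begin{proof}
Suppose $X=\{a_1,\dots,a_m\}\subseteq G$ induces an $m$-antihole in $\rg(G)$ with $m\geq 7$, and let $C_m$ be the complementary hole, so that for $i\neq j$ the vertices $a_i,a_j$ are non-adjacent in $\rg(G)$ if and only if they are adjacent in $C_m$, i.e.\ $i\equiv j\pm 1\pmod m$. Each $a_i$ is a non-isolated vertex, so $a_i$ lies in some generating pair and hence $a_iN\neq N$; thus $a_iN$ is one of the three involutions of $G/N\cong C_2\times C_2$. If $a_iN=a_jN$ with $i\neq j$, then $\langle a_i,a_j\rangle N\neq G$, so $a_i\not\sim a_j$ in $\rg(G)$ and therefore $i\equiv j\pm1\pmod m$. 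Hence, for each of the three involutions $\alpha\in G/N$, the set $V_\alpha=\{i: a_iN=\alpha\}$ consists of indices that are pairwise congruent to $\pm1$ modulo $m$; since $m\geq 5$, no three indices can be pairwise consecutive on the $m$-cycle, so $|V_\alpha|\leq 2$. As the three sets $V_\alpha$ partition $\{1,\dots,m\}$, we obtain $m\leq 6$, a contradiction.
\end{proof}
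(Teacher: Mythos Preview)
Your proof is correct. Both your argument and the paper's hinge on the same observation: if $a_iN=a_jN$ then $\langle a_i,a_j\rangle\neq G$, so in an $m$-antihole the indices $i,j$ must be cyclically consecutive. The paper then chases indices directly: it pins down $a_1N,a_3N$, deduces that $a_5N=\dots=a_{m-1}N$ must all equal the remaining involution, and from $a_5\nsim a_7$ (when $m>7$) gets an immediate contradiction, with a separate hands-on analysis for $m=7$. Your packaging is cleaner: by partitioning the index set into three colour classes $V_\alpha$ and noting that each is a pairwise-consecutive subset of the $m$-cycle (hence of size at most $2$ once $m\geq 5$), you obtain $m\leq 6$ in one stroke, with no case split. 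The trade-off is that the paper's chase also pins down the exact coset pattern on the antihole, which is occasionally useful elsewhere, whereas your counting argument gives the contradiction more quickly but yields no such extra information.
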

\begin{proof}
	Let $a,b,c\in G$ be \st $G/N:=\{N,aN,bN,cN\}$. Suppose $(a_1,\dots,a_m)$ is a $m$-antihole. Since $a_1$ and $a_3$ are adjacent vertices of $\Gamma(G),$ we may assume without loss of generality that $a_1N=aN$ and $a_3N=bN$. Since $a_5,\dots,a_{m-1}$ are adjacent to both $a_1$ and $a_3$, it follows that $a_5N,\dots,a_{m-1}N$ are all equal to $cN$. In particular, if $m>7$, then $a_5N=a_7N$ implies $a_5\nsim a_7$, a contraction. So we may assume $m=7$. Since $a_4\sim a_1$, $a_4\sim a_6$, $a_1N=aN$
	and $a_6N=cN$, we must have  $a_4N=bN$. Analogously, from $a_2\sim a_4$ and $a_2\sim a_6$ it follows $a_2N=aN$. But now consider $a_7N:$  $a_7N\neq aN$ since $a_7\sim a_2$, $a_7N\neq bN$
	since $a_7\sim a_3$ and  $a_7N\neq cN$ since $a_7\sim a_5$. This would imply $a_7\in N$, and consequently that $a_7$ is an isolated vertex of $\Gamma(G),$ a contradiction.
\end{proof}

\begin{proof}[Proof of Theorem \ref{diedrale}] Let $m\geq5$ be odd. We start with two general remarks.
	
	\noindent $(*)$ No rotation $\rho^i$ can appear in an $m$-hole or $m$-antihole. Indeed if $|\rho^i|<n$, then $\rho^i$ is an isolated vertex of 	 $\rg(D_n)$. If $|\rho^i|=n$, then $\langle \rho\rangle$ is the unique maximal subgroup of $D_n$ containing $\rho^i$ and we conclude using Lemma \ref{unicomax}. So every $m$-hole or $m$-antihole in $\rg(D_n)$ will be assumed of the form $(a_1,\dots,a_m)$ with $a_i=\rho^{x_i}\iota$ for some $x_i\in \mathbb Z.$
	
	\noindent $(**)$	$\gen{\rho^a\iota,\rho^b\iota}=D_n$ \ifa $(a-b,n)=1$. 
	
	First we prove that if $n$ is odd, then (2) is a necessary condition for
	$\Gamma(D_n)$ being perfect.	Suppose $n$ is odd and $n=p^aq^br^ck$ with
	$p,q,r$ distinct primes and $k$ not divisible by them, possibly $k=1$. Consider the elements $\alpha_1,\dots,\alpha_4$, obtained solving the following systems (existence of solutions is guaranteed by the Chinese Reminder Theorem):
	\[
	\begin{cases}
	\alpha_1\equiv 1 &\mod p \\
	\alpha_1\equiv b &\mod q\\
	\alpha_1\equiv -1 &\mod r\\
	(\alpha_1\equiv 1 &\mod k)
	\end{cases}\qquad
	\begin{cases}
	\alpha_2\equiv -1 &\mod p \\
	\alpha_2\equiv -1-b &\mod q\\
	\alpha_2\equiv c &\mod r\\
	(\alpha_2\equiv 1 &\mod k)
	\end{cases}
	\]
	\[
	\begin{cases}
	\alpha_3\equiv 1 &\mod p \\
	\alpha_3\equiv 1 &\mod q\\
	\alpha_3\equiv d &\mod r\\
	(\alpha_3\equiv 1 &\mod k)
	\end{cases}\qquad
	\begin{cases}
	\alpha_4\equiv a &\mod p \\
	\alpha_4\equiv -1 &\mod q\\
	\alpha_4\equiv -c-d &\mod r\\
	(\alpha_4\equiv 1 &\mod k)
	\end{cases}
	\]
	where the conditions in the round brackets are considered only when $k\neq1$, and $a,b,c,d$ are such that
	$$
	a\not\equiv0,-1 \!\! \mod p,\quad b\not\equiv0,-1\!\! \mod q, \quad c,d\not\equiv0\!\! \mod r,\quad  c+d\not\equiv 0\!\! \mod r.
	$$
	It can be easily checked that
	$
	(\iota,\rho^{\alpha_1}\iota,\rho^{\alpha_1+\alpha_2}\iota,\rho^{\alpha_1+\alpha_2+\alpha_3}\iota,\rho^{\alpha_1+\alpha_2+\alpha_3+\alpha_4}\iota)
	$
	is a $5$-hole in $\rg(D_n)$.

	Now we prove that if (1) and (2) are satisfied, then $\Gamma(D_n)$ is perfect. We distinguish the different possibilities.
	
	\noindent a) $n$ is even. Since $D_n$ has an epimorphic image isomorphic to $C_2\times C_2$, by Lemma \ref{c2c2} the graph $\rg(D_n)$ has no $m$-antihole with $m\geq7$. Suppose that $\Gamma(D_n)$ contains an $m$-hole $(a_1,\dots,a_m)$, as described in $(*).$ Since $D_n=\gen{\rho^{x_i}\iota,\rho^{x_{i+1}}\iota}$ for every $i$ (where $m+1$ is considered to be $1$), by $(**)$ we should have $x_{i+1}-x_i$ odd for every $1\leq i\leq m$. Then, consider
	\[
	0=\sum_{i=1}^m x_{i+1}-x_i.
	\]
	The right hand side should be odd, because it is a sum of an odd number
	of odd terms, contradiction. We cannot have $m$-holes and $m$-antiholes (a $5$-hole is the same as a $5$-antihole), so $\rg(D_n)$ is perfect.
	
	\noindent b)  $n=p^a$ for a certain prime $p$.  Suppose that $\Gamma(D_n)$ contains an $m$-hole $(a_1,\dots,a_m)$ in
	$\rg(D_n)$. Since $a_1\nsim a_3,a_4$, we should have that $x_4-x_1$ and $x_3-x_1$ are divisible by $p$, hence their difference (i.e. $x_4-x_3$) should be divisible by $p$ and therefore $a_3\nsim a_4$, a contradiction. Suppose now there is a $m$-antihole  $(a_1,\dots,a_m)$. Since $a_2\nsim a_1,a_3$, the prime $p$ should divide $x_3-x_2$ and $x_2-x_1$ and so $p$ should divide their sum (i.e. $x_3-x_1$), which means $a_1\nsim a_3$, a contradiction.
	
	\noindent c)  $n=p^aq^b$ with $p\neq q$ primes. Suppose there is an $m$-hole $(a_1,\dots,a_m)$ in $\rg(D_n)$. Since $a_1\nsim a_3,a_4$, the differences $x_3-x_1$ and $x_4-x_1$ are divisible by at least one of $p$ or $q$. We may assume
	without loss of generality that $x_3-x_1$ is divisible by $p$. Then  $x_4-x_1$ is divisible by $q$, otherwise $a_3\nsim a_4$. For an analogous reasoning $x_4-x_2$ is divisible by $p$ and $x_m-x_3$ is divisible by $q$. From the fact the $p$ divides $x_4-x_2$, arguing as before we deduce that if $5\leq i \leq m$, then   $x_i-x_2$ is divisible by $q$ when $i$ is odd and by $p$ when $i$ is even. In particular $x_m-x_2$ is
	divisible by $q$ and since $q$ divides also $x_m-x_3$, we have $a_2\nsim a_3$, a contradiction.
	Suppose now there is an $m$-antihole $(a_1,\dots,a_m)$ in $\rg(D_n)$. Since $a_i\nsim a_{i+1}$, the difference $x_{i+1}-x_i$ is divisible by at least one of $p$ or $q$. We have an odd number of possible $i$, so there must be a $k$ \st $x_{k+1}-x_k$ and $x_k-x_{k-1}$ are both divisible by the same prime, which means that $x_{k+1}-x_{k-1}$ is also divisible by the
	same prime, hence $a_{k+1}\nsim a_{k-1}$, a contradiction.
\end{proof}

\subsection{Groups of order $p^aq^b$ and $pqr$}\label{piccoli}

We have seen in the previous subsections that if $G$ is a dihedral group or a 2-generated nilpotent group and $|G|$ is divisible by at most three distinct primes, then $\Gamma(G)$ is perfect. However there exist 2-generated finite groups whose generating graph is not perfect, although their order is divisible only by two distinct primes.
Let $H = C_2^2$ and let $h_1$, $h_2$, $h_3$ be the nontrivial elements
of $H$. Let $p$ be an odd prime number and consider $N = \langle x_1,x_2,x_3\rangle \cong C_{p}^3.$ We may define an action of $H$ on $N$ by setting
$$	\begin{aligned}
x_1^{h_1}=x_1, \quad x_1^{h_2}=x_1^{-1}, \quad x_1^{h_3}=x_1^{-1},\\
x_2^{h_1}=x_2^{-1}, \quad x_2^{h_2}={x_2}, \quad x_2^{h_3}=x_2^{-1},\\
x_3^{h_1}=x_3^{-1}, \quad x_3^{h_2}=x_3^{-1}, \quad x_3^{h_3}=x_3.
\end{aligned}
$$
Let $G$ be the semidirect product $N\rtimes H.$ It can be easily checked
that $$(x_1h_1, x_2x_3h_2, x_1x_3h_3, x_1^2x_2h_2, x_2x_3h_3)$$
is a 5-hole in $\Gamma(G).$
\begin{lemma}\label{duemax}
	Let $G$ be a 2-generated finite group and let $m$ be an odd integer, with $m\geq 5.$ Let $X \subseteq G.$ If there exist two maximal subgroups $M_1$ and $M_2$ of $G$ such that  $X \subseteq M_1\cup M_2,$ then $X$ does
	not induce neither an $m$-hole nor an $m$-antihole.
\end{lemma}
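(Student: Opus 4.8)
The plan is to derive a contradiction by showing that, under the hypothesis $X\subseteq M_1\cup M_2$, the subgraph of $\Gamma(G)$ induced by $X$ must be bipartite, whereas an odd $m$-hole and an odd $m$-antihole never are. So suppose for contradiction that $(a_1,\dots,a_m)$ is an $m$-hole or an $m$-antihole in $\Gamma(G)$ with all $a_i\in X$, and recall that since $m\geq 5$ every vertex of such a configuration has at least two neighbours.

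First I would build the bipartition. For each $i$, since $a_i\in M_1\cup M_2$, put $a_i$ into a set $A$ if $a_i\in M_1$, and into a set $B$ otherwise (so $B$ collects those vertices lying in $M_2$ but not in $M_1$); this yields a partition $X=A\sqcup B$ with $A\subseteq M_1$ and $B\subseteq M_2$. The key point is then immediate: if $a_i,a_j\in A$ then $\langle a_i,a_j\rangle\leq M_1<G$, so $a_i\not\sim a_j$ in $\Gamma(G)$, and likewise $\langle a_i,a_j\rangle\leq M_2<G$ forces $a_i\not\sim a_j$ whenever $a_i,a_j\in B$. Hence every edge of the induced subgraph joins $A$ to $B$, i.e.\ this subgraph is bipartite.

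To finish, I would invoke the elementary fact that neither an $m$-hole nor an $m$-antihole with $m$ odd and $m\geq 5$ is bipartite: an $m$-hole is an odd cycle, while the complement of $C_m$ is isomorphic to $C_5$ when $m=5$ and, when $m\geq 7$, contains the triangle on $\{a_1,a_3,a_5\}$ (as $a_i$ and $a_{i+2}$, and also $a_1$ and $a_5$, are non-consecutive vertices of the $m$-hole and hence adjacent in its complement). This contradicts the bipartiteness established above, completing the proof. There is no serious obstacle here; the only step requiring a moment's care is the last one, namely confirming that $\overline{C_m}$ is non-bipartite for every odd $m\geq 5$, which the $C_5$/triangle dichotomy settles.
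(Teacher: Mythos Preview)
Your proof is correct. The bipartition $A=X\cap M_1$, $B=X\setminus M_1\subseteq M_2$ works exactly as you say, and your non-bipartiteness check for $\overline{C_m}$ (the $C_5$ case and the triangle $\{a_1,a_3,a_5\}$ for $m\geq 7$) is fine.

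The paper proceeds differently, treating holes and antiholes by hand. For an $m$-hole it forces the labels to alternate, $a_i\in M_1\setminus M_2$ for $i$ odd and $a_i\in M_2\setminus M_1$ for $i$ even, and then uses the parity of $m$ to trap $a_1$ and $a_m$ together in $M_1$. For an $m$-antihole it observes that all of $a_3,\dots,a_{m-1}$ are forced into $M_2$ (being adjacent to $a_1\in M_1$), whence $\langle a_3,a_{m-1}\rangle\leq M_2$ unless $m=5$, and the residual $5$-antihole is disposed of as a $5$-hole. Your argument packages both cases into the single observation that the induced subgraph is bipartite, which is cleaner and avoids the case split; the paper's argument, on the other hand, is entirely self-contained and does not rely on knowing the structure of $\overline{C_m}$.
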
	
\begin{proof}
	Suppose that $(a_1,\dots,a_m)$ is an $m$-hole induced by $X$. We may assume $a_1\in M_1.$ Since $G=\langle a_i, a_{i+1}\rangle$, it follows $a_i \in M_2 \setminus M_1$ if $i$ is even, $a_i \in M_1 \setminus M_2$ if $i$ is odd. In particular, since $m$ is odd, $G=\langle a_1, a_m \rangle \leq M_1,$ a contradiction.
	Now	suppose that $(a_1,\dots,a_m)$ is an $m$-antihole induced by $X$. Again we may assume $a_1\in M_1.$ If $3\leq i \leq m-1,$ then $G=\langle a_1, a_i\rangle$ implies $a_i \in M_2$ and therefore $m=5,$ otherwise $G=\langle a_3, a_{m-1} \rangle \leq M_2.$ We may exclude this possibility since a $5$-antihole is isomorphic to a $5$-hole.
\end{proof}
\begin{lemma}\label{treprimi}
	Suppose that $G=\left(\langle x \rangle \times \langle y \rangle\right) \rtimes \langle z \rangle$, with $|x|=p_1,$ $|y|=p_2$, $|z|=p_3,$
	where $p_1,p_2,p_3$ are primes.
	If $G$ is 2-generated, then $\Gamma(G)$ is perfect.\end{lemma}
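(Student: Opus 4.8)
The plan is to use the strong perfect graph theorem and show $\rg(G)$ has no $m$-hole or $m$-antihole with $m\ge 5$ odd, organising by the action of $\gen{z}$ on $N=\gen{x}\times\gen{y}$. If $\gen{z}$ centralises $N$, then $G$ is nilpotent and $|G|$ divides $p_1p_2p_3$, so $|G/\frat(G)|$ is a product of at most three primes and Theorem \ref{nilperf} applies. If $N\cong C_p\times C_p$ and $\gen{z}$ acts as a non-trivial scalar, then for any $g_1,g_2$ with $\gen{g_1,g_2}=G$ one of them generates $G$ modulo $N$ (as $G/N\cong C_{p_3}$ has prime order), so the other may be replaced by an element $m\in N$; but then $\gen{g_1,g_2}\cap N=\gen{m}$ is cyclic and proper in $N$, a contradiction, so this case cannot occur. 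Thus we may assume $\gen{z}$ acts on $N$ non-trivially and not as a scalar; in all these cases $\frat(G)=1$.

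Suppose $\rg(G)$ contains an $m$-hole or $m$-antihole with $m\ge 5$ odd. By Lemma \ref{aiaj} its vertices generate pairwise distinct cyclic subgroups, by Lemma \ref{unicomax} none of those lies in a unique maximal subgroup, and none is isolated. Since $\gen{g}=\gen{g'}$ implies $\gen{g,h}=\gen{g',h}$, adjacency in $\rg(G)$ depends only on the cyclic subgroup generated, so it suffices to show the graph $\mathcal C$ — with vertices the non-trivial proper cyclic subgroups of $G$ having non-isolated generators and lying in at least two maximal subgroups, and $C\sim C'$ when $\gen{g,g'}=G$ for $g\in C,\ g'\in C'$ — contains no odd hole or antihole of length $\ge 5$.

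The action of $\gen{z}$ is now either irreducible, or reducible with exactly two $\gen{z}$-invariant lines $L_1,L_2$. In the irreducible case $N\cong C_p\times C_p$, the conjugates of $\gen{z}$ and the lines of $N$ each lie in only one maximal subgroup, and $\mathcal C$ is empty. In the reducible case either $\gen{z}$ is fixed-point-free on $N$ (so $G$ is Frobenius with kernel $N$) or it centralises some $L_i$, in which case $G\cong F\times C$ with $F$ a Frobenius group of order a product of two primes and $C$ cyclic of prime order. In both of these situations one verifies that the generators of $L_1,L_2$ are isolated, that the cyclic subgroups of composite order and the remaining (``diagonal'') lines lie in a unique maximal subgroup, and that the surviving vertices — the conjugates of $\gen{z}$ in the Frobenius case, and certain non-central prime-order subgroups in the direct-product case — form $\mathcal C$ with $C\sim C'$ exactly when $C,C'$ differ in both coordinates of a natural $a\times b$ labelling; that is, $\mathcal C\cong K_a\wedge K_b$ with $a,b$ primes dividing $|G|$ (and $b=1$, so $\mathcal C$ empty, in the two split cases where only one coordinate occurs). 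Since a complete graph contains neither an induced $Y$ nor an odd hole of length $\ge 5$, Theorem \ref{rv} gives that $K_a\wedge K_b$ is perfect, hence has no odd hole or antihole of length $\ge 5$; therefore neither does $\mathcal C$, neither does $\rg(G)$, and $\rg(G)$ is perfect.

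The \emph{main obstacle} is the final paragraph: one must carry out the structural dichotomy explicitly, decide for each isomorphism type of $G$ which cyclic subgroups remain as potential hole-vertices — in particular prove the $L_i$-generators are always isolated while the conjugates of $\gen{z}$ (and, in the split case, the relevant prime-order subgroups) are not — and then match the induced adjacency to the tensor-product pattern $K_a\wedge K_b$. The bookkeeping is lightest when $N$ is cyclic and heaviest for $G\cong C_p\times(C_q\rtimes C_p)$, where both factors of the grid genuinely contribute.
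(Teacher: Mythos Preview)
Your strategy is sound and lands on the same structural core as the paper: once isolated vertices and those lying in a unique maximal subgroup are discarded, the surviving cyclic subgroups are parametrised by pairs $(r,s)$ and are adjacent precisely when both coordinates differ, so the relevant graph is $K_{p_1}\wedge K_{p_2}$. The paper reaches this point via two short claims (handling the sub-case $y\in Z(G)$, $p_2\neq p_3$ separately with Lemma~\ref{duemax}) and then finishes by a direct coordinate-chasing argument that this graph has no odd hole or antihole of length $\geq 5$. You instead split by the type of the $\gen{z}$-action on $N$ and close by invoking Theorem~\ref{rv}, observing that complete graphs contain neither an induced $Y$ nor an odd hole of length $\geq 5$; your endgame is cleaner, while the paper's is more self-contained.

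One point needs correction. In the direct-product case $G\cong C\times F$ with $p_2\neq p_3$, the generators of $L_1=\gen{x}$ are \emph{not} isolated in $\Gamma(G)$: for instance $\gen{x,yz}=G$, since $G/\gen{x}\cong C_{p_2}\times C_{p_3}$ is cyclic. So $\gen{x}$ is a genuine vertex of your graph $\mathcal C$, contrary to your assertion. This does not spoil the conclusion, because every $\Gamma(G)$-neighbour of $x$ must surject onto the cyclic group $G/\gen{x}$ and hence has order divisible by $p_2p_3$, so lies in a unique maximal subgroup and is therefore absent from $\mathcal C$; thus $\gen{x}$ is isolated \emph{within} $\mathcal C$, and $\mathcal C$ is still edgeless in this sub-case (which is exactly what the paper's two-maximal-cover argument via Lemma~\ref{duemax} encodes). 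Your write-up should make this repair explicit rather than claiming that $x$ is isolated in $\Gamma(G)$.
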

\begin{proof}
	If $G$ is abelian, then the conclusion follows from Theorem \ref{nilperf}. So we may assume $x \notin Z(G).$ Let $m\geq 5$ be an odd integer and suppose that $X\subseteq G$ induces an $m$-hole or an $m$-antihole in  $\Gamma(G).$ 
	
	First we claim that if $y \in Z(G)$, then $p_2 = p_3.$ Indeed assume $y\in Z(G)$ and $p_2\neq p_3$ and let $g=x^iy^jz^k \in G.$ If $|y^jz^k|=p_2p_3,$ then $|g|=p_2p_3$, so $\langle g \rangle$ is the unique maximal subgroup of $G$ containing $g$ and $g \notin X$ by Lemma \ref{unicomax}. But then $X \subseteq M_1\cup M_2,$ with $M_1=\langle x, z \rangle$ and $M_2=\langle x, y \rangle,$ in contradiction with Lemma \ref{duemax}. 
	
	Our second claim is  that $\langle x \rangle$ and $\langle y \rangle$ are not $\langle z \rangle$-isomorphic. This is obvious if $p_1\neq p_2$, otherwise it is a necessary condition for $G$ being 2-generated.
	
	The two previous claims  imply that for every $r,s,u,v \in \mathbb Z$,  $\langle x^ry^s,x^uy^vz \rangle=G$ if and only if  $x^r, y^s \neq 1.$ In particular consider $w=x^ry^s \in \langle x, y \rangle.$ If either $x^r=1$ or $y^s=1,$ then $w$ is an isolated vertex in $\Gamma(G)$. Moreover, the fact that $\langle x \rangle$ and $\langle y \rangle$ are not $\langle z \rangle$-isomorphic implies that
	$\langle x, y \rangle$ is the unique maximal subgroup of $G$ containing
	$u.$ In any case, $u$ cannot be an element of $X$.

	Let $(a_1,\dots,a_m)$ be an $m$-hole or an $m$-antihole in $\Gamma(G)$ induced by $X$. By what we have said above, it is not restrictive to assume $a_i=x^{r_i}y^{s_i}z$ with
	$r_i, s_i \in \mathbb Z$ and we may assume in particular $a_1=z.$ Notice that $\langle x^{r_i}y^{s_i}z,  x^{r_j}y^{s_j}z \rangle =G$ if and only if $r_i \not\equiv r_j\! \mod p_1$ and $s_i \not\equiv s_j\! \mod p_2.$ 
	
	If $(a_1,\dots,a_m)$ is an $m$-hole, then
	$a_1\not\sim a_j$ for any $j\in \{3,\dots,m-1\}$. This implies that either $a_j \in \langle x, z \rangle$ or $a_j \in \langle y, z \rangle.$ On the other hand $a_j \sim a_{j+1}$, so it is not restrictive to assume $s_3\equiv  0\! \mod p_2,\ r_4\equiv  0\! \mod p_1, \dots, s_{m-2}\equiv  0\! \mod p_2,\ r_{m-1}\equiv  0\! \mod p_1.$
	On the other hand, $a_1 \sim a_2$ and $a_1 \sim a_m$ implies
	$r_2, r_m \not\equiv 0 \! \mod p_1$ and $s_2, s_m \not\equiv 0 \! \mod p_2.$
	By Lemma \ref{aiaj}, $r_3\not\equiv 0\! \mod p_1$ and $s_{m-1}\not \equiv 0\! \mod p_2.$ Since $a_2 \not\sim a_{m-1}$ and $a_3\not\sim a_m$, we deduce $s_2 \equiv s_{m-1}\! \mod p_2$ and $r_3 \equiv r_m \! \mod p_1.$ Since $a_2\sim a_3$ and $a_{m-1} \sim a_m,$ it follows $r_2 \not\equiv r_3 \! \mod p_1$ and $s_{m-1} \not\equiv s_m \! \mod p_2,$ but then $r_2 \not\equiv r_m \! \mod p_1$ and $s_{2} \not\equiv s_m
	\! \mod p_2.$ This implies $a_2 \sim a_m$, a contradiction. 
	
	Now suppose that $(a_1,\dots,a_m)$ is an $m$-antihole. We may assume $m\geq 7$ since a 5-antihole is isomorphic to a $5$-hole. From the conditions $a_1 \not\sim a_2$ and $a_1 \not\sim a_m$ it follows that is this not restrictive to assume $s_2=0$ and $r_m=0$. Since $a_i \not\sim a_{i+1}$, it follows
	$$a_1=z,\ a_2=x^{r_2}z,\ a_3=x^{r_2}y^{s_3}z,\ a_4=x^{r_4}y^{s_3}z,
	\	a_5=x^{r_4}y^{s_5}z, \	a_6=x^{r_6}y^{s_5}z \dots$$
	In particular
	$$a_{m-2}=x^{r_{m-3}}y^{s_{m-2}}z,\ a_{m-1}=x^{r_{m-1}}y^{s_{m-2}}z,
	\ a_{m}=y^{s_{m}}z.$$
	From $a_{m-1} \sim a_1,$ it follows $r_{m-1} \not \equiv 0 \! \mod p_1$ and from $a_m \sim a_{m-2},$ it follows $s_m \not \equiv s_{m-2} \! \mod p_2$, however this would imply $a_{m-1} \sim a_m,$ a contradiction.
\end{proof}

\begin{prop}\label{piqu}
	If $G=p_1p_2$ with $p_1, p_2$ primes, then $\Gamma(G)$ is perfect.
\end{prop}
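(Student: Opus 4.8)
The plan is to split into the abelian and the non-abelian case. If $G$ is abelian it is nilpotent, and since $|G/\frat(G)|$ divides $|G|=p_1p_2$, which is a product of at most two primes, Theorem \ref{nilperf} gives at once that $\Gamma(G)$ is perfect (for $G$ cyclic one may quote Lemma \ref{ciclo} instead). So the only real content is the case where $G$ is non-abelian; note that this forces $p_1\neq p_2$, say $p_1<p_2$, since every group of order $p^2$ is abelian.

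In the non-abelian case I would first pin down the subgroup structure of $G$. By Sylow's theorems the Sylow $p_2$-subgroup $N$ is normal, hence it is the unique Sylow $p_2$-subgroup and therefore the unique subgroup of order $p_2$, and $G=N\rtimes P$ with $P$ a (cyclic) Sylow $p_1$-subgroup. By Lagrange's theorem the proper non-trivial subgroups of $G$ all have order $p_1$ or $p_2$, hence prime index, hence are maximal. Moreover every non-identity element $g\in G$ has order $p_1$ or $p_2$: it cannot have order $p_1p_2$, for then $G=\langle g\rangle$ would be cyclic.

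Next I would show that every non-identity element of $G$ lies in a unique maximal subgroup. If $|g|=p_2$ then $g\in N$, and no subgroup of order $p_1$ contains $g$, so $N$ is the unique maximal subgroup containing $g$. If $|g|=p_1$ then $\langle g\rangle$ is a maximal subgroup of order $p_1$ containing $g$; since $p_1\neq p_2$ no subgroup of order $p_2$ contains $g$, and any subgroup of order $p_1$ containing $g$ must equal $\langle g\rangle$, so $\langle g\rangle$ is the unique maximal subgroup containing $g$.

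With this in hand I would invoke Lemma \ref{unicomax}: no non-identity element of $G$ can be a vertex of an $m$-hole or an $m$-antihole with $m\geq5$, and the identity element is isolated so it cannot be such a vertex either. Hence $\Gamma(G)$ contains no $m$-hole and no $m$-antihole with $m\geq5$, and by the strong perfect graph theorem $\Gamma(G)$ is perfect. I do not expect a genuine obstacle: the only point requiring care is the elementary bookkeeping that shows every vertex, simultaneously, lies in a unique maximal subgroup (so that Lemma \ref{unicomax} applies to all of them), after which the strong perfect graph theorem finishes the non-abelian case and Theorem \ref{nilperf} has already finished the abelian one.
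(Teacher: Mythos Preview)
Your proof is correct and rests on the same key tool as the paper, namely Lemma \ref{unicomax}. The paper's proof is the single line ``It follows immediately from Lemma \ref{unicomax}'', so your argument is really a fleshed-out version of theirs: you explicitly verify that every non-identity element of a non-abelian group of order $p_1p_2$ lies in a unique maximal subgroup, then invoke Lemma \ref{unicomax} and the strong perfect graph theorem, exactly as intended.

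The only difference worth noting is that your abelian/non-abelian split is unnecessary. In the abelian case too, every non-identity element either generates $G$ (and is then adjacent to every vertex, so it cannot lie in an $m$-hole or $m$-antihole with $m\geq 5$) or lies in a unique maximal subgroup, so Lemma \ref{unicomax} already disposes of it. Thus the paper's uniform one-line argument covers all cases at once, and your detour through Theorem \ref{nilperf} for the abelian case, while perfectly valid, is avoidable.
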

\begin{proof}
	It follows immediately from Lemma \ref{unicomax}.
\end{proof}
\begin{prop}
	If $|G|=p_1p_2p_3,$ where $p_1,$ $p_2$ and $p_3$ are three distinct primes, then $G$ is 2-generated and $\Gamma(G)$ is perfect.
\end{prop}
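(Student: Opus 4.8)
The plan is to combine the structure theory of groups of squarefree order with the lemmas already proved. Since $|G|=p_1p_2p_3$ is squarefree, all Sylow subgroups of $G$ are cyclic, and $G$ is soluble (this is classical). Write $F=\fit(G)$: being a nilpotent normal subgroup of squarefree order, $F$ is cyclic, and since $G$ is soluble $C_G(F)=F$, so $G/F$ embeds in the abelian group $\aut(F)$ and, having squarefree order, is cyclic. By Schur--Zassenhaus $G=F\rtimes H$ with $H\cong G/F$ cyclic and acting faithfully on $F$; as $F$ and $H$ are cyclic, $G$ is $2$-generated. It then remains to prove that $\Gamma(G)$ is perfect, and we argue according to the value of $|F|$.

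If $|F|=p_1p_2p_3$ then $G$ is cyclic and $\Gamma(G)$ is perfect by Lemma~\ref{ciclo}. If $|F|$ is the product of two of the primes, say $|F|=p_ip_j$, then $|H|=p_k$ is prime, so we may write $F=\langle x\rangle\times\langle y\rangle$ with $|x|=p_i$, $|y|=p_j$ and $H=\langle z\rangle$ with $|z|=p_k$; then $G=(\langle x\rangle\times\langle y\rangle)\rtimes\langle z\rangle$ and Lemma~\ref{treprimi} gives that $\Gamma(G)$ is perfect.

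The remaining case is $|F|=p$ a single prime, so that $G$ is non-abelian and $|H|=qr$, where $\{p,q,r\}=\{p_1,p_2,p_3\}$; thus $G=\langle x\rangle\rtimes\langle z\rangle$ with $|x|=p$, $|z|=qr$ and $\langle z\rangle$ faithful on $\langle x\rangle$ (so $qr\mid p-1$). Let $M$ and $L$ be the preimages in $G$ of the subgroups of orders $q$ and $r$ of the cyclic group $G/\langle x\rangle$. A short computation shows that $M$ and $L$ are the unique subgroups of $G$ of orders $pq$ and $pr$, that both are normal and maximal, and that both are non-abelian, since the faithful action of $\langle z\rangle$ on $\langle x\rangle\cong C_p$ restricts nontrivially to every nontrivial subgroup (because $\aut(C_p)$ is cyclic). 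The main point is then that every vertex of $\Gamma(G)$ that can occur in an $m$-hole or $m$-antihole with $m\geq 5$ must have order $p$, $q$ or $r$: the identity is isolated; no element has order $pq$, $pr$ or $pqr$, because the unique subgroups of $G$ of those orders (namely $M$, $L$ and $G$) are not cyclic; and any element of order $qr$ generates a subgroup of prime index $p$, hence a maximal subgroup of $G$, so it lies in a unique maximal subgroup and is excluded by Lemma~\ref{unicomax}. Finally, the unique Sylow $p$-subgroup $\langle x\rangle$ lies in $M\cap L$, every Sylow $q$-subgroup lies in $M$ because $G/M\cong C_r$ is a $q'$-group, and every Sylow $r$-subgroup lies in $L$ by the same reasoning; hence all such vertices lie in $M\cup L$. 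By Lemma~\ref{duemax}, $\Gamma(G)$ has no $m$-hole and no $m$-antihole for odd $m\geq 5$, so by the strong perfect graph theorem it is perfect.

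No single step is deep. The one requiring genuine care is the last case: one must list correctly which element orders arise in $C_p\rtimes C_{qr}$, check that $M$ and $L$ really are the \emph{unique} subgroups of their respective orders (so that there are no stray subgroups of order $pq$ or $pr$), and confirm that every vertex which could appear in an odd hole or antihole is trapped inside $M\cup L$ so that Lemma~\ref{duemax} applies; the other cases follow at once from Lemmas~\ref{ciclo} and~\ref{treprimi}.
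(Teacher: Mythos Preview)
Your proof is correct, and the overall case analysis coincides with the paper's: cyclic; $(\langle x\rangle\times\langle y\rangle)\rtimes\langle z\rangle$ handled by Lemma~\ref{treprimi}; and the remaining case $G=C_p\rtimes C_{qr}$ with faithful action. The only substantive difference is the endgame in this last case. The paper first applies Lemma~\ref{unicomax} to reduce $X$ to elements of prime order, then eliminates elements of order $p$ by observing that such an element is adjacent in $\Gamma(G)$ only to elements of order $qr$ (which have already been excluded), and concludes by noting that two elements both of order $q$, or both of order $r$, are never adjacent, so the graph induced on $X$ is bipartite and cannot carry an odd hole or antihole. Your alternative---showing that all prime-order elements lie in the union $M\cup L$ of two normal maximal subgroups and invoking Lemma~\ref{duemax}---is equally short and arguably more structural; it replaces the ad hoc bipartite argument by a direct appeal to a lemma already in the toolkit.
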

\begin{proof}By \cite[10.1.10]{rob}, $G$ is 2-metacyclic. We may assume that $G$ is non-abelian, so $G=\langle x \rangle \rtimes \langle y \rangle$, with $y\neq 1$ and $C_{\langle y \rangle}(x)=1.$
	If $|x|$ is the product of two different primes, then the conclusion follows from Lemma \ref{treprimi}. So we may assume that $|x|=p_1.$ 
	Assume that $X$ induces an $m$-hole or an $m$-antihole in $\Gamma(G)$, where $m\geq 5$ is an odd integer. By Lemma \ref{unicomax}, $X$ contains only elements of prime order; moreover an element of order $p_1$ is adjacent in $\Gamma(G)$ only to elements of order $p_2p_3$, so $X$ can contain only
	elements of order $p_2$ or $p_3.$ On the other hand two elements of the same order $p_2$ or $p_3$ are not adjacent in $\Gamma(G)$, and it is easy to see that this implies that $X$ cannot induce neither an $m$-hole nor an
	$m$-antihole.
\end{proof}	
\begin{prop}
	If $|G|=p^2q,$ where $p$ and $q$ are distinct primes, then $\Gamma(G)$ is perfect.
\end{prop}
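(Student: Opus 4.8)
The plan is to reduce to a short list of explicit groups and, in each, either invoke one of the lemmas already available or identify $\Gamma(G)$, up to isolated vertices, as a graph obtained from complete multipartite graphs, split graphs and tensor products of complete graphs by operations that preserve perfection. First, if $G$ is not $2$-generated then $\Gamma(G)$ has no edges and is perfect, so we may assume $d(G)=2$. If $G$ is nilpotent then $|G/\frat(G)|$ divides $p^2q$, a product of at most four primes, and $\Gamma(G)$ is perfect by Theorem \ref{nilperf}. So assume $G$ is non-nilpotent; comparing the possible numbers $1$ or $q$ of Sylow $p$-subgroups with the possible numbers $1,p,p^2$ of Sylow $q$-subgroups, $G$ has a normal Sylow subgroup.

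Next I would use Lemma \ref{qfra} to pass to $G/\frat(G)$ whenever $\frat(G)\neq 1$. The cases where this is decisive are $G\cong C_{p^2}\rtimes C_q$ with $C_q$ acting non-trivially, where $\frat(G)$ is the subgroup of order $p$ of $C_{p^2}$, and $G\cong C_q\rtimes C_{p^2}$ with the image of $C_{p^2}$ in $\aut(C_q)$ of order $p$, where $\frat(G)=\langle x^p\rangle$ for a generator $x$ of the Sylow $p$-subgroup; in both, the quotient has order $pq$ and Proposition \ref{piqu} applies. We are thus reduced to $\frat(G)=1$, $G$ non-nilpotent of order $p^2q$, and running through the admissible actions the surviving groups are: (a) $G=V\rtimes\langle z\rangle$ with $V\cong C_p^2$, $|z|=q$, and $V$ a faithful $\langle z\rangle$-module that is either irreducible or a sum of two non-isomorphic non-trivial $1$-dimensional modules (so $G$ is a Frobenius group with kernel $V$); (b) $G\cong (C_p\rtimes C_q)\times C_p$; (c) $G\cong C_q\rtimes C_{p^2}$ with faithful action (a Frobenius group with kernel $C_q$); (d) $G\cong (C_q\rtimes C_p)\times C_p$. (The case in which $\langle z\rangle$ acts on $C_p^2$ by a non-trivial scalar is excluded, since then $d(G)=3$.)

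For (b) the elements of $V$ outside the two invariant lines, together with the elements of order $pq$, each lie in a unique maximal subgroup, so by Lemma \ref{unicomax} they are not vertices of an odd hole or antihole; the remaining candidate vertices all lie in the union of the two maximal subgroups $V$ and $A\rtimes\langle z\rangle$ (with $A$ the non-trivial summand), and Lemma \ref{duemax} rules out an odd hole or antihole. For the other three cases I would compute the adjacency of $\Gamma(G)$ directly. In (a), after deleting the identity and, in the reducible subcase, the non-identity elements of the two invariant lines (which turn out isolated), what remains is the join of an edgeless graph with a complete multipartite graph (irreducible subcase) or with a blow-up, i.e. vertices replaced by independent sets, of $K_p\wedge K_p$ (reducible subcase). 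In (c) one checks that, apart from the identity, $\Gamma(G)$ has no isolated vertices and is a blow-up of a split graph: the elements of order $q$ and order $p^2$ yield the blown-up vertices of a clique of size $q+1$, the elements of order $p$ form an independent set, and each blown-up order-$p$ vertex is non-adjacent to exactly one clique vertex. In (d) the elements of order $q$ and the central elements of order $p$ are all isolated, while the elements of order $pq$ all generate the unique cyclic subgroup $C_G(C_q)\cong C_{pq}$; deleting the isolated vertices leaves the join of an edgeless graph with a blow-up of the tensor product $K_p\wedge K_q$.

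To conclude, I would quote that complete multipartite graphs and split graphs are perfect, that tensor products of complete graphs are perfect by Theorem \ref{rv}, and that adding isolated vertices, forming joins, and substituting independent sets for vertices all preserve perfection; combined with the descriptions above this gives that $\Gamma(G)$ is perfect in every case. The main obstacle is the bookkeeping in the $\frat(G)=1$ cases: one must list the admissible module structures without omission, determine in each the isolated vertices and the vertices lying in a unique (or in exactly two) maximal subgroup(s), and then recognise the resulting adjacency pattern as a standard perfect graph up to blow-up and join — case (c), where neither a Frattini reduction nor Lemma \ref{duemax} is available, is the most delicate.
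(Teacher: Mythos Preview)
Your plan is sound and would succeed: the case enumeration after Frattini reduction is exhaustive, the four cases (a)--(d) are handled correctly, and the graph-theoretic closure facts you invoke (split and complete multipartite graphs are perfect, $K_m\wedge K_n$ is perfect by Theorem~\ref{rv}, and perfection survives adding isolated vertices, joins, and substitution of independent sets) are all standard. One small slip: in case~(c) each order-$p$ vertex is non-adjacent to \emph{two} clique vertices (both $M=\langle a,x^p\rangle$ and the unique Sylow $p$-subgroup containing it), not one; this does not matter, since the underlying graph is still split and hence perfect.

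Your route, however, is quite different from the paper's and considerably longer. The paper observes that whenever the normal Sylow subgroup has exponent $p$ (your cases (a), (b) and (d)) the group has the shape $(\langle x\rangle\times\langle y\rangle)\rtimes\langle z\rangle$ with $|x|,|y|,|z|$ prime, so Lemma~\ref{treprimi} applies in one stroke; and when the Sylow $p$-subgroup is cyclic and normal (your $C_{p^2}\rtimes C_q$) the Frattini reduction to order $pq$ is immediate. Only $C_q\rtimes C_{p^2}$ remains, and there the paper gives a two-line argument via Lemma~\ref{unicomax}: the only elements contained in more than one maximal subgroup are those of order $p$, and any two of those lie together in $M=\langle a,x^p\rangle$, so no odd hole or antihole can be formed. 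What you gain from your approach is an explicit description of $\Gamma(G)$ in each case (as a join or blow-up of a concrete perfect graph), which is informative; what the paper's approach buys is brevity and a single unifying lemma, avoiding the four separate adjacency computations and the appeal to the replication lemma.
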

\begin{proof} First assume that the Sylow $p$-subgroup, say $P$, of $G$ is normal. If $P \cong C_p \times C_p,$ then the conclusion follows from Lemma \ref{treprimi}. If $P\cong C_{p^2},$ then $\frat(G)$ has order $p$, so $G/\frat(G)$ is perfect by Proposition \ref{piqu} and consequently, by
	Lemma \ref{qfra}, also $\Gamma(G)$ is perfect. So we may assume that the
	Sylow $p$-subgroups are not normal, which implies that the Sylow $q$-subgroup, say $Q,$ is normal.
	Either $G\cong (C_p \times C_q) \rtimes C_p$  or $G\cong C_q \rtimes C_{p^2}.$ In the first case the conclusion follows again from Lemma \ref{treprimi}. In the second
	case the only non-trivial elements of $G$ that are contained in at least two different maximal subgroups are those of order $p,$ so, by Lemma
	\ref{unicomax}, if $X\subseteq G$ induces an $m$-hole or an $m$-antihole,
	with $m \geq 5$ an odd integer, then $X$ contains only elements of order $p.$ However two elements of order $p$ are not adjacent in $\Gamma(G)$, so we reached a contradiction.
\end{proof}

\subsection{The symmetric and alternating group}
In this subsection we determine for which values of $n$, the symmetric and alternating group of degree $n$ have perfect generating graph.
In the proofs we will need the following elementary lemmas:
\begin{lemma}\label{prim1}
	Let $H\leq S_n$ be a transitive permutation group. If $\sigma\in H$ is a $(n-1)$-cycle, then $H$ is primitive.
\end{lemma}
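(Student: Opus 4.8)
The plan is to argue by contradiction. Suppose $H$ is transitive but imprimitive, so that $H$ preserves a non-trivial block system $\mathcal{B}=\{B_1,\dots,B_k\}$ with $k>1$ and $1<|B_i|<n$ for every $i$. Let $\omega$ denote the unique point of $\{1,\dots,n\}$ fixed by the $(n-1)$-cycle $\sigma$, and let $B\in\mathcal{B}$ be the block containing $\omega$.

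First I would observe that $\sigma$ fixes $B$ setwise. Since $\sigma\in H$, it permutes the blocks of $\mathcal{B}$, so $\sigma(B)$ is again a block; moreover $\sigma(B)$ contains $\sigma(\omega)=\omega$. As the blocks are pairwise disjoint and $\omega$ lies in exactly one of them, namely $B$, we must have $\sigma(B)=B$.

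Next, consider the set $B\setminus\{\omega\}$. It is contained in the support $\{1,\dots,n\}\setminus\{\omega\}$ of $\sigma$, a set of size $n-1$, and it is $\sigma$-invariant because $\sigma(B)=B$ and $\sigma(\omega)=\omega$. But $\sigma$ acts on its support as a single $(n-1)$-cycle, hence transitively, so the only $\sigma$-invariant subsets of that support are $\emptyset$ and the whole support. Therefore either $B\setminus\{\omega\}=\emptyset$, giving $|B|=1$, or $B\setminus\{\omega\}$ is the entire support, giving $|B|=n$; in both cases we contradict $1<|B|<n$. Hence no non-trivial block system exists and $H$ is primitive.

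The argument is complete in these three short steps, so I do not anticipate a real obstacle; the only point worth isolating explicitly is the elementary fact that an $(n-1)$-cycle admits no proper non-empty invariant subset of its support, which is exactly what forces the block through the fixed point to be trivial or full.
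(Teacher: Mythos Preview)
Your proof is correct and follows essentially the same approach as the paper: both argue by contradiction, take the block $B$ containing the fixed point of $\sigma$, observe that $\sigma(B)=B$, and then use that $\sigma$ acts transitively on its support to force $B$ to be a singleton or all of $\{1,\dots,n\}$. The paper phrases the last step by picking $i\in B\setminus\{\omega\}$ and tracing its $\sigma$-orbit, while you phrase it via invariant subsets of a transitive action, but this is the same argument.
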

\begin{proof}
	We may assume without loss of generality that the fixed point of $\sigma$ is $1$. Suppose $H$ is imprimitive. Let $B$ be the imprimitivity block which contains $1$, then $B^\sigma=B$ since $1^\sigma=1$. By the imprimitivity assumption, there exists $1\neq i\in B$. But then $i,i^\sigma,i^{\sigma^2},\dots i^{\sigma^{n-2}}$ are all distinct elements, so $B=\{1,\dots,n\}$, a contradiction.
\end{proof}
\begin{lemma}\label{prim2}
	Let $n\geq3$ be an odd natural number and $H\leq S_n$ be a transitive permutation group. If $\sigma\in
	H$ is a $(n-2)$-cycle, then $H$ is primitive.
\end{lemma}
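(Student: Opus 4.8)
The plan is to adapt the proof of Lemma~\ref{prim1} to the present situation, where $\sigma$ fixes two points rather than one; the extra ingredient needed to close the gap is precisely the hypothesis that $n$ is odd. First I would dispose of the case $n = 3$: here $n$ is prime, and every transitive subgroup of $S_n$ is primitive, since a block size must divide the degree. Hence I may assume $n \geq 5$, so that $\sigma$ fixes exactly two points, say $p$ and $q$, and acts as a single $(n-2)$-cycle on the remaining set $\Omega$ of $n - 2$ points.

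The core of the argument is a short counting. Suppose, for contradiction, that $H$ is imprimitive, and fix a nontrivial block system. All its blocks have a common cardinality $b$ with $1 < b < n$, and since they partition $\{1,\dots,n\}$ we have $b \mid n$; as $n$ is odd this forces $b$ to be odd, so in particular $b \geq 3$. Let $B$ be the block containing $p$. Since $H$ permutes the blocks and $\sigma(p) = p$, the block $\sigma(B)$ contains $p$, and as distinct blocks are disjoint we conclude $\sigma(B) = B$.

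Now I would examine $B \cap \Omega$. If some $r \in B \cap \Omega$ exists, then $\sigma^i(r) \in \sigma^i(B) = B$ for every $i \geq 0$, and since $\sigma$ permutes $\Omega$ in a single cycle these images run over all of $\Omega$; hence $\Omega \subseteq B$, and together with $p \in B$ this yields $|B| \geq n - 1$. As $B$ is a proper block, $b = |B| = n - 1$, which is incompatible with $b \mid n$ (it would force $n - 1 \mid 1$, i.e. $n = 2$). Therefore $B \cap \Omega = \emptyset$, so $B \subseteq \{p, q\}$ and $b \leq 2$, contradicting $b \geq 3$. Consequently $H$ admits no nontrivial block system and is primitive.

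There is essentially no hard step here: the proof is a brief divisibility argument, and the only thing requiring attention is the bookkeeping that isolates where oddness of $n$ enters the picture — it is exactly what eliminates the potential block $\{p,q\}$ of size $2$. It is worth noting in passing that the statement genuinely fails for even $n$: for instance $D_4 \leq S_4$ is transitive and imprimitive, yet contains a transposition, which is an $(n-2)$-cycle when $n = 4$.
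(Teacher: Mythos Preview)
Your proof is correct and follows essentially the same route as the paper's: both take the block $B$ through a fixed point of $\sigma$, observe it is $\sigma$-invariant, use oddness of $n$ to force $|B|\geq 3$, and then let the $(n-2)$-cycle sweep a non-fixed element of $B$ over all of $\Omega$ to obtain a divisibility contradiction. The only cosmetic differences are that you treat $n=3$ separately and split into the two cases $B\cap\Omega\neq\emptyset$ versus $B\cap\Omega=\emptyset$, whereas the paper folds these together; your closing remark on $D_4$ showing necessity of the oddness hypothesis is a nice addition not present in the original.
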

\begin{proof}
	Suppose, without loss of generality, that the fixed points of $\sigma$ are $1$ and $2$. Suppose $H$ is imprimitive. As in the proof of the previous Lemma, take $B$ to be the block containing $1$. Since $n$ is odd, $|B|\geq 3$, so there is at least an element $i$ in $B\setminus \{1,2\}$. Arguing as in the proof of the previous lemma, we obtain that $|B|\geq n-1$ and, since $|B|$ divides $n$, we conclude $B=\{1,\dots,n\}$, a contradiction.
\end{proof}
\begin{thm}
	$\rg(S_n)$ is perfect \ifa $n \leq 4$.
\end{thm}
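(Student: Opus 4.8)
The plan is to prove the two implications separately: "$n \le 4$ $\Rightarrow$ perfect" by a direct structural analysis, and "$n \ge 5$ $\Rightarrow$ not perfect" by exhibiting a $5$-hole (which by the strong perfect graph theorem suffices in both directions, a $5$-hole being the smallest odd hole and a $5$-antihole being a $5$-hole).

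For the forward direction the cases $n\le 2$ are trivial since $\rg(S_n)$ has at most two vertices, and $n=3$ follows from Theorem~\ref{diedrale} applied to $S_3\cong D_3$ ($n=3$ being odd with one prime divisor). For $n=4$ I would argue as follows. By Lemma~\ref{unicomax} every vertex of an $m$-hole or $m$-antihole with $m\ge5$ lies in at least two maximal subgroups of $S_4$; a $4$-cycle lies only in the Sylow $2$-subgroup that contains it, and the identity is isolated, so the possible vertices are transpositions, $3$-cycles and double transpositions. A short inspection of the maximal subgroups of $S_4$ shows that none of the pairs (two double transpositions), (double transposition, transposition), (double transposition, $3$-cycle), (two transpositions), (two $3$-cycles) generates $S_4$; hence in the subgraph of $\rg(S_4)$ induced by these elements the double transpositions are isolated and the rest is bipartite (transpositions versus $3$-cycles). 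A bipartite graph has no odd hole, and no odd antihole either: an $m$-antihole with $m\ge7$ contains a triangle, and a $5$-antihole is a $5$-hole. So $\rg(S_4)$ is perfect.

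For the converse I would produce, for each $n\ge5$, a $5$-hole $(a_1,\dots,a_5)$ of $\rg(S_n)$. The case $n=5$ is settled by a direct check (as in Theorem~\ref{mina5}; note that a $5$-hole of $\rg(A_5)$ is \emph{not} one of $\rg(S_5)$, since two elements of $A_5$ never generate $S_5$, so a separate witness is needed). For $n\ge6$ I would take each $a_i$ to be a $3$-cycle, an $(n-1)$-cycle or an $n$-cycle, placed around the pentagon so that every pentagon edge joins two elements of different type; generation of $S_n$ along those edges is then checked via Lemma~\ref{prim1} (a transitive group with an $(n-1)$-cycle is primitive) together with the classical facts that a primitive group containing a transposition equals $S_n$ and one containing a $3$-cycle contains $A_n$, the parity of the other generator forcing $S_n$. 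Non-generation along the five pentagram pairs is arranged by one of three devices: two of the $a_i$ are transpositions; two share a fixed point; or, according to the parity of $n$, two are both even, or both preserve a common block system. The supports of the $a_i$ are chosen in imitation of the $5$-hole of $\Delta_n$ in Lemma~\ref{deltan}, so that the needed coincidences and non-coincidences of fixed points hold at once.

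The main obstacle is the simultaneous bookkeeping in this construction — all five edge pairs must generate $S_n$ and all five non-edge pairs must fail — complicated by the fact that the available devices depend on the arithmetic of $n$: when $n$ is prime there are no block systems and an $n$-cycle together with a transposition always generates $S_n$, so for such $n$ the non-edges must be produced from parity and common fixed points alone. I expect the cleanest treatment splits $n$ odd from $n$ even, using $(n-2)$-cycles and Lemma~\ref{prim2} in the odd case, and falls back on a direct verification for the smallest $n$ ($n=5$, and perhaps $n=6$), where the primitivity lemmas bite least and exceptional primitive groups such as $\mathrm{PGL}_2(5)$ acting on $6$ points have to be excluded by hand.
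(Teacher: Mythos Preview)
Your plan is correct and follows the paper's overall strategy. For $n\le 4$ your argument is in fact cleaner than the paper's: once $4$-cycles are excluded via Lemma~\ref{unicomax} and the double transpositions are seen to be isolated, the remaining non-isolated vertices (transpositions and $3$-cycles) span a bipartite graph, which kills all odd holes and antiholes at once; the paper instead disposes of $m$-holes and $m$-antiholes in $\Gamma(S_4)$ by two separate ad hoc counts. For $n\ge 5$ the paper does exactly what you outline---explicit $5$-holes for $n=5,6,7$ and then a parity-split construction using Lemmas~\ref{prim1} and~\ref{prim2}---but with different cycle types than you propose: for $n\ge 8$ even it uses two $(n-2)$-cycles, two $(n-1)$-cycles and a single $4$-cycle, and for $n\ge 9$ odd two $(n-3)$-cycles, two $(n-2)$-cycles and a $6$-cycle, certifying generation via Jones's classification of primitive groups containing a short cycle rather than the classical Jordan theorem on $3$-cycles. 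Your ingredients would also work, though note a small slip in your sketch: you list ``two of the $a_i$ are transpositions'' among the non-generation devices, yet none of your proposed $a_i$ are transpositions.
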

\begin{proof} We distinguish the different possible values of $n.$
	
	\noindent a) $n=2.$ In this case $\Gamma(S_2)\cong K_2$ is perfect.
	
	\noindent b) $n=3$. Since $S_3\cong D_3$, it follows from Theorem \ref{diedrale} that $\Gamma(S_3)$ is perfect.
	
	\noindent c) $n=4$. Suppose there is an $m$-hole $(a_1,\dots,a_m)$ in $\Gamma(S_4),$ with $m\geq 5.$ Two consecutive vertices $a_i$ and $a_{i+1}$ are adjacent and therefore they cannot belong both to $A_4$. Since $m$ is odd, there must be two consecutive vertices which are in $S_4\setminus
	A_4$. Since two elements of order $2$ do not generate the group, one of these two vertices should be a $4$-cycle. However a $4$-cycle is contained
	in a unique maximal subgroup, so we have a contradiction by Lemma \ref{unicomax}. Suppose now that there is an $m$-antihole
	$(a_1,\dots,a_m)$ in $\Gamma(S_4)$, with $m\geq7$. Since $4$-cycles cannot occur in an $m$-antihole and elements of the Klein subgroup cannot generate with another element, the vertices of the antihole can be only transpositions and $3$-cycles. There are at most two $3$-cycles among the vertices of the antihole. Indeed if we pick three elements in an $m$-antihole, at least two of them are adjacent but two $3$-cycles do not generate $S_4$. So, at least $m-2$ of the vertices of the antihole $(a_1,\dots,a_m)$
	are transpositions. Since two transpositions do not generate $S_4$, we have a contradiction.
	We conclude that $\Gamma(S_4)$ is perfect.
	
	\noindent d) $n=5,6,7$. It can be easily checked that the following are $5$-holes in $\Gamma(S_n):$
	\[
	\begin{aligned}
	\rg(S_5):&\qquad ((1,2,3,4,5),(2,4),(1,2,3,5,4),(2,4,5,3),(1,2,4,5));\\
	\rg(S_6):&\qquad ((1,3,2,4),(3,4,6,5),(1,2,3,4,5),(1,3,4,6),(2,3,4,5,6)); \\
	\rg(S_7):&\qquad ((1,5,4,7,2,3),(2,6,5,7,3,4),(1,2,3,4,5,7,6),(4,5),(1,2,3,4,5,6,7)).
	\end{aligned}
	\]
	
	\noindent  e) $n\geq8$ even. In this case we claim that
	\[
	\begin{aligned}
	&a_1=(1, \dots, n-2)\\
	&a_2=(3, \dots, n)\\
	&a_3=(1, \dots, n-1)\\
	&a_4=(1, 3, 4, n)\\
	&a_5=(2, \dots, n)
	\end{aligned}
	\]
	is a $5$-hole in $\rg(S_n)$.
	
	Notice that
	$\gen{a_1,a_3}$, $\gen{a_1,a_4}$, $\gen{a_2,a_4}$, $\gen{a_2,a_5}$ are intransitive subgroups and $\gen{a_3,a_5}\leq A_n$, so the pairs of corresponding vertices are not joined by an edge.
	Since $a_3$ and $a_5$ are $(n-1)$-cycles, the  transitive subgroups $\gen{a_2,a_3}$,  $\gen{a_3,a_4}$, $\gen{a_4,a_5}$, $\gen{a_5,a_1}$ are also
	primitive by Lemma \ref{prim1}. Let us now prove that also the transitive
	subgroup $\gen{a_1,a_2}$ is primitive. Let  $B$ be an imprimitive block which contains $1$. Clearly $B^{a_2}=B$. If $B\cap\{3,\dots,n\} \neq \emptyset,$ then  $\{1,3,\dots,n\}\subseteq B$, a contradiction. So $B=\{1,2\}$, but then $B\cap B^{a_1}=\{2\}$, another contradiction. Moreover
	\[
	\begin{aligned}
	a_1a_2^{-1}&=(1,2,n,n\!-\!1,n\!-\!2)&\in\gen{a_1,a_2}\\
	a_3a_2^{-1}&=(1,2,n,n\!-\!1)&\in\gen{a_2,a_3}\\
	a_4&=(1,3,4,n)&\in\gen{a_3,a_4}\\
	a_4&=(1,3,4,n)&\in\gen{a_4,a_5}\\
	a_1a_5^{-1}&=(1,n,n\!-\!1,n\!-\!2)&\in\gen{a_5,a_1}
	\end{aligned}
	\]
	But then, by \cite[Corollary 1.3]{Jo}, the  five subgroups
	$\gen{a_1,a_2},$ $\gen{a_2,a_3},$ $\gen{a_3,a_4},$ $\gen{a_4,a_5}$ and $\gen{a_5,a_1}$
	contain $A_n$. Since they contain elements outside $A_n$, they must be
	the whole $S_n$, so the pairs of corresponding vertices are joined by an edge.
	
	\noindent f) $n\geq 9$ odd. In this case we claim that
	\[
	\begin{aligned}
	&a_1=(1, \dots, n-3)\\
	&a_2=(4, \dots, n)\\
	&a_3=(1, \dots, n-2)\\
	&a_4=(1, 2, 4, 5, n\!-\!1, n\!-\!2)\\
	&a_5=(3, \dots, n)
	\end{aligned}
	\]
	is a $5$-hole in $\rg(S_n)$.\\
	As in the discussion of the previous case, $\gen{a_1,a_3}$, $\gen{a_1,a_4}$, $\gen{a_2,a_4}$, $\gen{a_2,a_5}$ are intransitive subgroups and $\gen{a_3,a_5}\leq A_n$, so the pairs  of corresponding vertices are not joined by an edge.
	Since $a_3$ and $a_5$ are $(n-2)$-cycles, the transivite subgroups $\gen{a_2,a_3}$, $\gen{a_3,a_4}$, $\gen{a_4,a_5}$, $\gen{a_5,a_1}$ are also primitive by Lemma \ref{prim2}. Let us now prove that also the transitive subgroup $\gen{a_1,a_2}$ is primitive. Let $B$ be an imprimitivity  block which contains $1$, so that $B^{a_2}=B$. If $B\cap \{4,\dots,n\} \neq \emptyset,$ then $\{1,4,\dots,n\}\subseteq B$, a contradiction. Since $|B|\geq3$, the only possibility is $B=\{1,2,3\}$, but this leads to a contradiction since $B\cap B^{a_1}=\{2,3\}$.
	Moreover
	\[
	\begin{aligned}
	a_1&=(1, \dots, n-3)&\in\gen{a_1,a_2}\\
	a_3a_2^{-1}&=(1,2,3,n,n\!-\!1,n\!-\!2)&\in\gen{a_2,a_3}\\
	a_4&=(1, 2, 4, 5, n\!-\!1, n\!-\!2)&\in\gen{a_3,a_4}\\
	a_4&=(1, 2, 4, 5, n\!-\!1, n\!-\!2)&\in\gen{a_4,a_5}\\
	a_1&=(1, \dots, n-3)&\in\gen{a_5,a_1}
	\end{aligned}
	\]
	and, as in the previous case, we deduce from \cite[Corollary 1.3]{Jo}
	that
	$\gen{a_1,a_2} = \gen{a_2,a_3} = \gen{a_3,a_4} = \gen{a_4,a_5}= \gen{a_5,a_1}= S_n.$
\end{proof}

\begin{thm}\label{alterni}
	$\rg(A_n)$ is perfect \ifa $n\leq 4$.
\end{thm}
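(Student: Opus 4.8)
The plan is to handle the two implications separately, closely paralleling the proof just given for $\rg(S_n)$. For $n\le 4$ we check that $\rg(A_n)$ is perfect. The cases $n=2$ and $n=3$ are immediate, since $A_2$ is trivial and $A_3\cong C_3$, so that $\rg(A_3)\cong K_3$; and for $n=4$ one observes that $A_4\cong (C_2\times C_2)\rtimes C_3$ is a $2$-generated group of exactly the form treated in Lemma \ref{treprimi} (take $p_1=p_2=2$ and $p_3=3$), whence $\rg(A_4)$ is perfect.

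For $n\ge 5$ it suffices, by the strong perfect graph theorem, to produce a subset of $A_n$ inducing a $5$-hole in $\rg(A_n)$, an odd hole being forbidden in a perfect graph. As in the symmetric case, the small degrees $n=5,6,7$ (together with one or two further values of $n$, should the generic families below not reach down that far) will be dispatched by exhibiting explicit $5$-tuples $(a_1,\dots,a_5)$ of even permutations and verifying --- directly or with GAP \cite{GAP4} --- that the five consecutive pairs each generate $A_n$ while the five non-consecutive pairs do not. For the remaining degrees I would write down two parametrized families of five even permutations, one for $n$ even and one for $n$ odd, modelled on the families used for $\rg(S_n)$. The verification then runs along the same lines: for a non-consecutive pair the union of the two supports omits a point of $\{1,\dots,n\}$, so the pair generates an intransitive group; for a consecutive pair the two supports overlap and cover $\{1,\dots,n\}$, so the group is transitive, and it is primitive because one of the two elements is an $(n-1)$- or $(n-2)$-cycle (Lemma \ref{prim1} or Lemma \ref{prim2}); finally some short word in the two generators is a cycle fixing at least three points, so by \cite[Corollary 1.3]{Jo} the group contains $A_n$, and since all generators are even it equals $A_n$.

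The point that has to be reworked, rather than merely copied from the $S_n$ argument, is the treatment of one of the five non-consecutive pairs. In the proof for $\rg(S_n)$ that pair was disposed of by noting that both of its elements lie in $A_n$ and hence cannot generate $S_n$; inside $A_n$ this is no longer available, because every element is even. So the even families have to be arranged so that \emph{all five} non-consecutive pairs generate intransitive (in particular proper) subgroups, which is an extra support constraint beyond those already present in the $S_n$ families, while still keeping each consecutive pair transitive, primitive, and rich enough in short cycles. Meeting all of these conditions at once --- and, in doing so, replacing the odd-permutation factors of the $S_n$ families (there the $(n-2)$-cycles and the $4$- or $6$-cycle) by even permutations with comparable supports --- is the main obstacle; it is entirely combinatorial, but it is what forces a careful redesign of the five $a_i$ and may push the threshold for the ``small'' cases slightly above $n=7$.
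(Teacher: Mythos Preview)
Your overall plan mirrors the paper's proof, and for odd $n\ge 5$ it is exactly what the paper does: the five $a_i$ are $(n-2)$-cycles, the non-consecutive pairs are intransitive, and primitivity comes from Lemma~\ref{prim2}. Your treatment of $n\le 4$ is also correct; invoking Lemma~\ref{treprimi} for $A_4\cong (C_2\times C_2)\rtimes C_3$ is a clean alternative to the paper's short direct argument via Lemma~\ref{unicomax}.

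The gap is in the even case. You propose to obtain primitivity of each consecutive pair from Lemma~\ref{prim1} or Lemma~\ref{prim2}, i.e.\ by ensuring that one of the two generators is an $(n-1)$- or $(n-2)$-cycle. For $n$ even an $(n-2)$-cycle is an odd permutation, so only $(n-1)$-cycles are available in $A_n$; and since the five consecutive pairs are the edges of a $5$-cycle, at least three of the $a_i$ would have to be $(n-1)$-cycles (a vertex cover of $C_5$ has size at least~$3$). But this is incompatible with your own requirement that all five non-consecutive pairs be intransitive. An $(n-1)$-cycle has a unique fixed point $p_i$. If the three $(n-1)$-cycles are $a_i,a_j,a_k$ with $a_k$ non-adjacent in the hole to both $a_i$ and $a_j$, then intransitivity forces $p_i=p_k=p_j$, so the adjacent pair $(a_i,a_j)$ is intransitive too. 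If instead they form a path $a_i\!-\!a_j\!-\!a_k$ in the hole, then $p_i=p_k$, and the remaining vertices $a_l,a_m$ (each non-adjacent to two of $a_i,a_j,a_k$) are forced to fix both $p_i$ and $p_j$, whence the adjacent pair $(a_l,a_m)$ is intransitive. Either way the construction collapses, and this is a structural obstruction for every even $n$, not a small-case artefact. The paper sidesteps it by taking all five $a_i$ to be $(n-3)$-cycles when $n\ge 8$ is even (these are even permutations fixing three points, giving enough room for the support constraints) and proving primitivity of each consecutive pair by a direct block analysis rather than through Lemmas~\ref{prim1} or~\ref{prim2}; that ad hoc primitivity argument is the piece your outline is missing.
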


\begin{proof} We distinguish the different possible values of $n.$
	
	\noindent a) $n=3.$ In this case $\Gamma(A_3)\cong K_3$ is perfect.
	
	\noindent b)  $n=4$. Let $m$ be an odd positive integer, with $m\geq 5.$
	In an $m$-hole or in an $m$-antihole, at least one vertex should be a $3$-cycle, since in a pair of generators one should be outside the Klein subgroup. However a $3$-cycle is contained in a unique maximal subgroup, so we conclude using  Lemma
	\ref{unicomax} that there is neither an $m$-hole nor  an $m$-antihole. 
	
	\noindent c) $n=6$. In this case $\Gamma(A_6)$ contains the following
	$5$-hole:
	\[
	((1,2,3,4,5),(1,3)(5,6),(1,2,4,5,6),(1,4,2,3,5),(1,2,6)).\\
	\]
	
	\noindent d)  $n\geq 5$ odd. In this case we claim that
	\[
	\begin{aligned}
	&a_1=(1,2,3,6 \dots, n)\\
	&a_2=(2,4,5,6 \dots, n)\\
	&a_3=(1,3,5,6 \dots, n)\\
	&a_4=(2,3,4,6 \dots, n)\\
	&a_5=(1,4,5,6 \dots, n)
	\end{aligned}
	\]
	is a $5$-hole in $\rg(A_n)$.	The cases $n=5,7,9$ can be easily checked by hand, so we assume $n\geq11$. Notice that
	$\gen{a_1,a_3}$, $\gen{a_1,a_4}$, $\gen{a_2,a_4}$, $\gen{a_2,a_5}$ and
	$\gen{a_3,a_5}$ are intransitive subgroups, so the pair  of corresponding vertices are not joined by an edge.
	Since $a_1,\dots, a_5$ are $(n-2)$-cycles, the transitive subgroups $\gen{a_1,a_2}$, $\gen{a_2,a_3}$, $\gen{a_3,a_4}$, $\gen{a_4,a_5}$ and $\gen{a_5,a_1}$ are also primitive from Lemma \ref{prim2}. Moreover
	\[
	\begin{aligned}
	a_1^2a_2^{-2}&=(1,3,5,2,4,n,n\!-\!1)&\in\gen{a_1,a_2}\\
	a_2a_3^{-1}&=(1,n,2,4,3)&\in\gen{a_2,a_3}\\
	a_3^2a_4^{-2}&=(1, 5, 4, 2, n\!-\!1)&\in\gen{a_3,a_4}\\
	a_4^2a_5^{-2}&=(1,n\!-\!1, 2,n,3, 4, 5)&\in\gen{a_4,a_5}\\
	a_5a_1^{-1}&=(1,4,5,3,2)&\in\gen{a_5,a_1}
	\end{aligned}
	\]
	and we can use \cite[Corollary 1.3]{Jo} to conclude that
	$\gen{a_1,a_2} = \gen{a_2,a_3} = \gen{a_3,a_4} = \gen{a_4,a_5}=
	\gen{a_5,a_1}= A_n.$
	
	\noindent e) $n\geq 8$ even. In this case we claim that
	\[
	\begin{aligned}
	&a_1=(1,2,3,4,5,9 \dots, n)\\
	&a_2=(1,3,6,7,8,9 \dots, n)\\
	&a_3=(2,7,8,4,5,9 \dots, n)\\
	&a_4=(1,6,3,4,5,9 \dots, n)\\
	&a_5=(1,2,6,7,8,9 \dots, n)
	\end{aligned}
	\]
	is a $5$-hole in $\rg(A_n)$. The subgroups
	$\gen{a_1,a_3}$, $\gen{a_1,a_4}$, $\gen{a_2,a_4}$, $\gen{a_2,a_5}$ and $\gen{a_3,a_5}$  are intransitive, so the pair  of corresponding vertices
	are not joined by an edge.
	We prove that the transitive subgroup $\gen{a_1,a_2}$ is primitive (a similar argument works for the subgroups $\gen{a_2,a_3}$, $\gen{a_3,a_4}$,
	$\gen{a_4,a_5}$ and $\gen{a_5,a_1}$). Suppose it is imprimitive. Let $B$ be an imprimitive block containing $6$, so that $B^{a_1}=B$. We must have  $B\subseteq \{6,7,8\}$, otherwise we would have $\{1,2,3,4,5,6,9,\dots, n\}\subseteq B$. Moreover, since $B\cap B^{a_2}=\{7,8\}$, $B \neq \{6,7,8\}$, so either  $B=\{6,7\}$ or $B=\{6,8\}$. In the first case  the block which contains $8$, contains also an element different from $6,7,8$ and we get a contradiction as before; the same happens in the second case considering the block which contains $7$. Since $a_1,\dots,a_5$ are $(n-3)$-cycles, we can conclude, using \cite[Corollary 1.3]{Jo}, that 	$\gen{a_1,a_2} = \gen{a_2,a_3} = \gen{a_3,a_4} = \gen{a_4,a_5}= \gen{a_5,a_1}= A_n.$
\end{proof}

\subsection{Other simple groups}\label{semp}

In this subsection we prove that Conjecture \ref{cong2} is true also
when $G\cong \psl_2(q)$, $G\cong {^2B}_2(q)$ or $G\cong {^2G}_2(q).$
We need the following elementary observation.

\begin{lemma}\label{stabilizer}
	Let $G$ be a permutation group on the set $\Omega$. Let $\omega_1,\dots,\omega_5\in\Omega$ \st $\stab_G(\omega_i)<G$ for $i=1,\dots,5$. Let
	\begin{align*}
	&a\in \stab_G(\omega_1)\cap\stab_G(\omega_2),\\
	&b\in \stab_G(\omega_3)\cap\stab_G(\omega_4),\\
	&c\in \stab_G(\omega_5)\cap\stab_G(\omega_1),\\
	&d\in \stab_G(\omega_2)\cap\stab_G(\omega_3),\\
	&e\in \stab_G(\omega_4)\cap\stab_G(\omega_5).
	\end{align*}
	If $\gen{a,b}=\gen{b,c}=\gen{c,d}=\gen{d,e}=\gen{e,a}=G$, then
	$(a,b,c,d,e)$ is a $5$-hole in $\rg(G)$.
\end{lemma}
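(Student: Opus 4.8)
The plan is to check directly, from the definitions, that the sequence $(a,b,c,d,e)$, with the cyclic adjacency $a\!-\!b\!-\!c\!-\!d\!-\!e\!-\!a$, induces a chordless $5$-cycle in $\rg(G)$. By definition of a $5$-hole, three things have to be verified: (i) the five ``cyclic'' pairs $\{a,b\},\{b,c\},\{c,d\},\{d,e\},\{e,a\}$ are edges of $\rg(G)$; (ii) the five ``chordal'' pairs $\{a,c\},\{b,d\},\{c,e\},\{d,a\},\{e,b\}$ are non-edges; and (iii) the five elements are pairwise distinct.

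Point (i) requires nothing: it is exactly the hypothesis $\gen{a,b}=\gen{b,c}=\gen{c,d}=\gen{d,e}=\gen{e,a}=G$.

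For point (ii), I would observe that the membership conditions in the statement have been arranged so that each chordal pair has a common fixed point among $\omega_1,\dots,\omega_5$: namely $a,c\in\stab_G(\omega_1)$, $d,a\in\stab_G(\omega_2)$, $b,d\in\stab_G(\omega_3)$, $e,b\in\stab_G(\omega_4)$ and $c,e\in\stab_G(\omega_5)$. Since each $\stab_G(\omega_i)$ is a proper subgroup of $G$ by assumption, one gets $\gen{a,c}\le\stab_G(\omega_1)<G$, and likewise $\gen{b,d}<G$, $\gen{c,e}<G$, $\gen{d,a}<G$, $\gen{e,b}<G$; hence none of the chordal pairs is an edge of $\rg(G)$.

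The only item that needs a short argument is (iii), and I expect it to be routine rather than an obstacle. If two of the five elements coincided, then one of the cyclic pairs would become equal (as a set) to one of the chordal pairs; but the former generates $G$ while the latter does not, which is absurd. For example $a=b$ would force $\gen{b,c}=\gen{a,c}$, contradicting $\gen{b,c}=G$ and $\gen{a,c}\le\stab_G(\omega_1)<G$; the remaining nine possible coincidences are excluded in exactly the same way (equivalently, this is just the remark that $C_5$ has no two vertices with the same neighbourhood). Once (i), (ii), (iii) are in place, $(a,b,c,d,e)$ is a $5$-hole, as claimed. There is no real difficulty here: this is a bookkeeping lemma, and the substantive work is in the later subsection, where for $G\cong\psl_2(q)$, $G\cong{}^2B_2(q)$ or $G\cong{}^2G_2(q)$ one must produce an explicit permutation action and explicit elements $a,\dots,e$ satisfying the generation hypotheses.
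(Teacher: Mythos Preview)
Your proof is correct and complete. The paper itself does not give a proof of this lemma at all, calling it merely an ``elementary observation''; your write-up supplies exactly the routine verification the authors left to the reader, including the check that the five elements are pairwise distinct.
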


\begin{prop}
	Let $G=\psl_2(q),$ with $q> 3$. Then $\Gamma(G)$ contains a 5-hole.
\end{prop}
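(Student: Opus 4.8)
The plan is to exhibit an explicit $5$-hole in $\Gamma(\psl_2(q))$ by exploiting the action of $G=\psl_2(q)$ on the projective line $\Omega=\PG(1,q)$ of $q+1$ points, together with Lemma~\ref{stabilizer}. The point stabilizers in this action are the Borel subgroups $B\cong C_p^f\rtimes C_{(q-1)/d}$ (where $q=p^f$ and $d=\gcd(2,q-1)$), which are proper subgroups of $G$; so to apply Lemma~\ref{stabilizer} it suffices to pick five points $\omega_1,\dots,\omega_5\in\Omega$ and five elements $a,b,c,d,e$ with $a$ fixing $\omega_1,\omega_2$, $b$ fixing $\omega_3,\omega_4$, etc.\ (cyclically), and then to verify the five generation conditions $\gen{a,b}=\gen{b,c}=\gen{c,d}=\gen{d,e}=\gen{e,a}=G$ while the non-edges are automatic. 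An element fixing two points of $\Omega$ is conjugate to a diagonalizable (split torus) element $t\colon x\mapsto \lambda x$ fixing $0$ and $\infty$; choosing the five elements to be conjugates of such split torus elements of order $(q-1)/d$ (or a suitable divisor $>2$) is the natural choice, since large-order torus elements tend to generate together with another such element unless they lie in a common small subgroup.

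Concretely, I would work in $\ssl_2(q)$ and take a fixed element $g$ of order $q-1$ (a generator of the split torus $T$) and consider the five elements $a_i=g^{h_i}$ for suitable $h_i\in G$, chosen so that consecutive pairs $(a_i,a_{i+1})$ share exactly one common fixed point on $\Omega$ (forcing $\gen{a_i,a_{i+1}}$ to lie in no point stabilizer and in no common torus) while non-consecutive pairs share a fixed point (forcing a non-edge via a common Borel). The key point is that a subgroup of $G=\psl_2(q)$ containing two distinct split tori must, by Dickson's classification of subgroups of $\psl_2(q)$, be the whole group provided $q$ is not too small: the only proper subgroups containing an element of order $(q-1)/d$ are (conjugates of) a Borel, the dihedral normalizer $D_{2(q-1)/d}$ of a single split torus, and — for small $q$ — a few exceptional subgroups; two non-commuting split-torus elements with no common eigenvector cannot all fit in a single torus-normalizer or Borel. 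So the generation conditions reduce to checking that the relevant pairs of tori are "in general position'' and that $q>3$ rules out the sporadic exceptional subgroups ($A_4,S_4,A_5$, and $\psl_2$ of subfields) — one handles $q\in\{4,5,7,9,11\}$ (or however far Dickson's exceptions reach) by an explicit ad hoc choice or direct computation, and the generic argument for all larger $q$.

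The main obstacle will be making the five elements simultaneously satisfy all constraints: the cyclic incidence pattern on $\{\omega_1,\dots,\omega_5\}$ must be realized by actual group elements, and one must certify that each of the five "edge'' pairs genuinely generates rather than landing in the dihedral torus-normalizer or a Borel. For a split-torus generator this comes down to a pigeonhole/eigenvector argument: if $\gen{a_i,a_{i+1}}$ is contained in $N_G(S)$ for a torus $S$, then $a_i,a_{i+1}$ normalize $S$, and since each $a_j$ has order $>2$ they must both lie in $S$ itself, forcing a common pair of fixed points on $\Omega$ — contrary to the design; the Borel case is excluded because a Borel contains a unique split torus up to the unipotent radical and again would force a shared fixed point. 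So the heart of the proof is a careful but elementary choice of the $h_i$ (equivalently, of five points of $\PG(1,q)$ together with an assignment of which element fixes which pair) realizing the $5$-cycle incidence structure, followed by invoking Dickson's subgroup list to upgrade "generates a transitive, non-Borel, non-dihedral subgroup'' to "generates $G$'' once $q>3$; the finitely many small $q$ are disposed of by direct verification.
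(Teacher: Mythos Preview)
Your overall approach matches the paper's: invoke Lemma~\ref{stabilizer} with five split-torus elements of order $(q-1)/\gcd(2,q-1)$ acting on $\PG(1,q)$, use the maximal-subgroup classification (Dickson's list) to show that two distinct split tori generate $G$ once $q$ is large enough, and handle small $q$ separately. The paper does precisely this: for $q\notin\{4,5,7,9,11\}$ it checks directly from the tables that no maximal subgroup of $\psl_2(q)$ contains two distinct cyclic subgroups of order $(q-1)/\gcd(2,q-1)$, so any two two-point stabilizers $\stab_G(A)\cap\stab_G(B)$ and $\stab_G(C)\cap\stab_G(D)$ with $\{A,B\}\neq\{C,D\}$ already generate $G$; then $q\in\{4,5,9\}$ are handled via the isomorphisms with $A_5,A_6$ and $q\in\{7,11\}$ by exhibiting explicit $5$-holes.

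There is, however, a slip in your second paragraph that would be fatal if implemented as written. You say the design should have \emph{consecutive} pairs $(a_i,a_{i+1})$ share exactly one common fixed point. If they did, both would lie in the Borel subgroup stabilizing that point and hence could not generate $G$; your own parenthetical ``forcing $\gen{a_i,a_{i+1}}$ to lie in no point stabilizer'' contradicts the hypothesis immediately preceding it. The correct incidence pattern --- which you in fact describe correctly in your first paragraph when quoting Lemma~\ref{stabilizer} --- is the opposite: \emph{adjacent} pairs in the $5$-hole have \emph{disjoint} fixed-point sets (four distinct points of $\PG(1,q)$), while \emph{non-adjacent} pairs share exactly one fixed point (producing the non-edge via a common Borel). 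With this inversion repaired, your argument is exactly the paper's.
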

\begin{proof} We may assume $q\notin \{4, 5,9\},$ since
	$\psl_2(4)\cong\psl_2(5)\cong A_5$ and $\psl_2(9)\cong A_6$.
	The group $G$  has a faithful 2-transitive action on the $q+1$ points of the 1-dimensional projective space $\pg(1,q)$
	over the field  $\mathbb{F}_q$ with $q$ elements.
	Let $A,B,C,D$ be four distinct points of $\pg(1,q)$. The subgroups $H=\stab_G(A)\cap\stab_G(B)$ and $K=\stab_G(C)\cap\stab_G(D)$ are cyclic of order $u=(q-1)/(q-1,2)$. Notice that $H\neq K$, since the only element of $G$ which fixes three distinct points is the identity. The list of the maximal subgroups of $G$ is well-known (see for example \cite[Table 8.1]{max_simple}). In particular if  $q \notin\{7,11\}$, then no maximal subgroup of $G$ contains two distinct cyclic subgroups of order $u$. This implies $G=\langle H,K \rangle$. But then we can use Lemma \ref{stabilizer} to conclude.
	
	\noindent	If $q=7$, then $G\leq \perm(8)$ and the following is a 5-hole in $\Gamma(G):$
	$$((2,3,4)(5,8,7), (1,4,5)(3,7,6),
	(2,7,8)(3,6,5), (1,2,4)(6,7,8),
	(1,2,5,7)(3,8,6,4))$$
	If $q=11,$ then $G\leq \perm(12)$ and the following is
	a 5-hole in $\Gamma(G):$
	$$
	((3,9,5,11,7)(4,10,6,12,8),
	(1,6,3,4,12)(2,11,9,10,7),
	(1,3,8,5,4)(6,7,9,12,10),$$
	$$
	(2,12,11,8,3)(4,7,9,10,6),
	(1,9,6,7,5)(2,4,12,3,10)).
	\qedhere$$
\end{proof}

\begin{prop}\label{suzuki}
	Let $q=2^{2n+1}$ with $n\geq 1.$ If $G={^2B}_2(q)$ is a Suzuki group, then $\rg(G)$ contains a 5-hole.
\end{prop}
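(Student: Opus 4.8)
The plan is to mimic the argument given above for $G\cong\psl_2(q)$, using the natural $2$-transitive action of $G={}^2B_2(q)=\Sz(q)$ together with Lemma \ref{stabilizer}. Recall that $G$ acts faithfully and $2$-transitively on the $q^2+1$ points of the associated Suzuki ovoid $\Omega$: the stabilizer $\stab_G(\omega)$ of a point is a Frobenius group of order $q^2(q-1)$ whose complement is cyclic of order $q-1$, and the stabilizer $\stab_G(\omega)\cap\stab_G(\omega')$ of two distinct points is cyclic of order $q-1$, with set of fixed points exactly $\{\omega,\omega'\}$. Since $q=2^{2n+1}\geq 8$, the integer $q-1$ is odd and at least $7$; in particular each such two-point stabilizer is a non-trivial cyclic group and hence has a generator.

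First I would fix five distinct points $\omega_1,\dots,\omega_5\in\Omega$ (which is possible since $q^2+1\geq 65$) and choose
\[
a\in\stab_G(\omega_1)\cap\stab_G(\omega_2),\quad
b\in\stab_G(\omega_3)\cap\stab_G(\omega_4),\quad
c\in\stab_G(\omega_5)\cap\stab_G(\omega_1),
\]
\[
d\in\stab_G(\omega_2)\cap\stab_G(\omega_3),\quad
e\in\stab_G(\omega_4)\cap\stab_G(\omega_5),
\]
taking each of $a,b,c,d,e$ to be a generator of the corresponding cyclic two-point stabilizer. Since $\stab_G(\omega_i)<G$ for all $i$, Lemma \ref{stabilizer} will yield the $5$-hole $(a,b,c,d,e)$ in $\rg(G)$ once we check that $\langle a,b\rangle=\langle b,c\rangle=\langle c,d\rangle=\langle d,e\rangle=\langle e,a\rangle=G$. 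As each of $a,b,c,d,e$ generates its two-point stabilizer, and as in each of these five pairs the two stabilizers fix disjoint pairs of points (for instance $a$ fixes $\{\omega_1,\omega_2\}$ while $b$ fixes $\{\omega_3,\omega_4\}$, and similarly for the other four pairs), the whole problem reduces to the following claim: \emph{if $H$ and $K$ are two-point stabilizers of the action whose fixed-point sets are disjoint, then $\langle H,K\rangle=G$.}

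To prove the claim I would suppose for a contradiction that $\langle H,K\rangle\leq M$ for some maximal subgroup $M$ of $G$, and invoke the classification of the maximal subgroups of $\Sz(q)$ due to Suzuki: a point stabilizer of order $q^2(q-1)$; the torus normalizer $D_{2(q-1)}$; the torus normalizers $C_{q+\sqrt{2q}+1}\rtimes C_4$ and $C_{q-\sqrt{2q}+1}\rtimes C_4$; and the subfield subgroups $\Sz(q_0)$ with $q=q_0^r$, $r$ prime. Now $|M|$ is divisible by $|H|=q-1$, an odd number. The odd number $q-1$ divides neither $q+\sqrt{2q}+1$ nor $q-\sqrt{2q}+1$: each of these differs from $q-1$ by a nonzero integer ($\sqrt{2q}+2$, respectively $\sqrt{2q}-2$) which is smaller than $q-1$ as soon as $q\geq 8$; so $M$ is not one of the two $C_4$-extensions. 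Moreover every element of $\Sz(q_0)$ has order at most $q_0+\sqrt{2q_0}+1$, and this is less than $q-1$ since $q=q_0^r$ with $r$ an odd prime (so $r\geq 3$); hence no subfield subgroup contains a cyclic subgroup of order $q-1$, and $M$ is not a subfield subgroup. Therefore $M$ is either a point stabilizer $\stab_G(\omega)$, which is impossible because then $H$ and $K$ would both fix $\omega$, contradicting the disjointness of their fixed-point sets; or $M\cong D_{2(q-1)}$, in which case, $q-1$ being odd, $M$ has a unique subgroup of order $q-1$, forcing $H=K$ and hence $\Fix(H)=\Fix(K)$, again a contradiction. This establishes the claim, and with it the proposition.

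The step I expect to be the main obstacle is the claim — in particular, ruling out the two torus normalizers and the subfield subgroups as overgroups of a cyclic subgroup of order $q-1$. This rests on Suzuki's description of the subgroup structure of $\Sz(q)$ together with the elementary estimates, all valid because $q\geq 8$, that $q-1$ is odd, that $q-1$ does not divide $q\pm\sqrt{2q}+1$, and that $q-1$ exceeds the largest element order $q_0+\sqrt{2q_0}+1$ of any proper subfield subgroup $\Sz(q_0)$. Once the claim is in hand, deducing the five generation conditions of Lemma \ref{stabilizer} is just routine bookkeeping with the fixed-point sets.
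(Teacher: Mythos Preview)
Your proof is correct and follows essentially the same route as the paper's: both arguments use the $2$-transitive action of $\Sz(q)$ on the ovoid, take generators of two-point stabilizers attached to five distinct points, invoke Suzuki's list of maximal subgroups to show that any two such stabilizers with disjoint fixed-point pairs generate $G$, and then apply Lemma~\ref{stabilizer}. The only difference is presentational: the paper dismisses the maximal subgroups of types $C_{q\pm\sqrt{2q}+1}\!:\!C_4$ and $\Sz(q_0)$ in one line by saying they contain no element of order $q-1$, whereas you spell out the supporting arithmetic estimates explicitly.
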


\begin{proof}
	The group $G$ has a faithful 2-transitive action on an ovoid $\Omega$ in 4-dimensional symplectic geometry. Moreover the following are up to conjugacy
	the maximal subgroups of $G$  (see for example \cite[Theorem 7.3.3]{max_simple}):
	\begin{enumerate}
		\item the stabilizer of $\omega \in \Omega$ (the Borel subgroup of order 					$q^2(q-1)$);
		\item the dihedral group of order $2(q-1)$;
		\item $C_{q+\sqrt{2q}+1}: C_4$;
		\item $C_{q-\sqrt{2q}+1}: C_4$;
		\item ${^2B}_2(q_0)$, where $q=q_0^r$, $r$ is prime and $q_0>2$.
	\end{enumerate}
	
	If $\omega_i$ and $\omega_j$ are distinct elements of $\Omega,$ then $\stab_G(\omega_i)\cap\stab_G(\omega_j)$ is cyclic of order $q-1$. Let $x$ be a generator of this cyclic group. Let now $\omega_l$ and $\omega_k$ in $\Omega$ \st $\omega_i,\omega_j,\omega_k,\omega_l$ are all distinct, and call $y$ the generator of $\stab_G(\omega_k)\cap\stab_G(\omega_l)$. Since the only element fixing three points is the identity, we have that $\gen{x}\neq \gen{y}$. Consider the subgroup $H:=\gen{x,y}$. If $H$ is a proper subgroup, it is contained in a maximal subgroup. However $H$ cannot be contained in subgroups of type $(3)$, $(4)$ and $(5)$, since they do not contain elements of order $q-1$. Since $\gen{x}\neq\gen{y}$ we can exclude also the fact that $H$ is contained in the subgroups of type $(2)$, since the dihedral group contains a unique subgroup of order $q-1$. Finally, if $H\leq\stab_G(\omega)$, for some $\omega\in\Omega$, at least one of $x$ and $y$ should fix three different points, which is impossible, therefore $H=G$ and using Lemma \ref{stabilizer} we can build a $5$-hole.
\end{proof}

\begin{prop}\label{ree}
	Let $q:=3^{2n+1}$ with $n\geq 1.$ If $G:={^2G_2}(q)$, then $\Gamma(G)$ contains a 5-hole.
\end{prop}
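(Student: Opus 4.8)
The plan is to follow the strategy of Proposition \ref{suzuki} and produce the $5$-hole by means of Lemma \ref{stabilizer}, using the $2$-transitive permutation representation of $G$. Recall that $G={}^2G_2(q)$ acts faithfully and $2$-transitively on the set $\Omega$ of $q^3+1$ points of the associated Ree unital, with point stabilizer the Borel subgroup $[q^3]:C_{q-1}$ of order $q^3(q-1)$, and that for distinct $\alpha,\beta\in\Omega$ the two-point stabilizer $G_{\alpha\beta}$ is cyclic of order $q-1$. The first observation I would record is that a \emph{generator} $g$ of $G_{\alpha\beta}$ fixes precisely the two points $\alpha$ and $\beta$: if $g$ fixed a third point $\gamma$, then $G_{\alpha\beta}=\langle g\rangle$ would be contained in the proper subgroup $\stab_{G_{\alpha\beta}}(\gamma)$, a contradiction. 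In particular $\fix(G_{\alpha\beta})=\{\alpha,\beta\}$, so distinct pairs of points yield distinct two-point stabilizers; and if $x$ generates $G_{\alpha\beta}$ and $y$ generates $G_{\gamma\delta}$ with $\{\alpha,\beta\}\cap\{\gamma,\delta\}=\emptyset$, then $\langle x,y\rangle$ cannot lie inside any point stabilizer.

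Next I would pick five points $\omega_1,\dots,\omega_5\in\Omega$ and take $a,b,c,d,e$ to be generators of $G_{\omega_1\omega_2}$, $G_{\omega_3\omega_4}$, $G_{\omega_5\omega_1}$, $G_{\omega_2\omega_3}$, $G_{\omega_4\omega_5}$ respectively, aiming to apply Lemma \ref{stabilizer}. Each of the five elements then lies in the two prescribed two-point stabilizers, and the five ``missing edges'' are automatic, since each non-consecutive pair among $a,b,c,d,e$ fixes a common point of $\Omega$ and hence generates a proper subgroup. So everything comes down to arranging that the five ``edges'' $\langle a,b\rangle$, $\langle b,c\rangle$, $\langle c,d\rangle$, $\langle d,e\rangle$, $\langle e,a\rangle$ are all equal to $G$; note that for each of these five pairs one has $\langle\,\cdot\,,\,\cdot\,\rangle=\langle G_{\omega_i\omega_j},G_{\omega_k\omega_l}\rangle$ with $\{\omega_i,\omega_j\}\cap\{\omega_k,\omega_l\}=\emptyset$. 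From the classification of the maximal subgroups of ${}^2G_2(q)$ for $q\geq 27$ (see for example \cite[Theorem 7.3.4]{max_simple}) one checks that the only maximal subgroups of $G$ containing a two-point stabilizer $T\cong C_{q-1}$ are the two point stabilizers $\stab_G(\alpha)$, $\stab_G(\beta)$ with $\{\alpha,\beta\}=\fix(T)$, the centralizer $C_G(z)\cong C_2\times\psl_2(q)$ of the unique involution $z\in T$, and a torus normalizer $N_G(T)$, a soluble group of order $12(q-1)$; indeed the subfield subgroups ${}^2G_2(q_0)$ and the normalizers of the tori of orders $q+1$ and $q\pm\sqrt{3q}+1$ have order not divisible by $q-1$ once $q\geq 27$. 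Consequently $\langle G_{\omega_i\omega_j},G_{\omega_k\omega_l}\rangle$ is proper only when the two cyclic groups $G_{\omega_i\omega_j}$, $G_{\omega_k\omega_l}$ have the same involution, or lie together in a common conjugate of $N_G(T)$.

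The main obstacle — and the point where the Ree case genuinely departs from the Suzuki case — is precisely this last possibility. For the Suzuki groups every maximal subgroup meeting a two-point stabilizer in the full $C_{q-1}$ is either a point stabilizer or has a \emph{unique} cyclic subgroup of order $q-1$, so \emph{any} two distinct two-point stabilizers generate the group and no care is needed in choosing the five points. For the Ree groups both $C_2\times\psl_2(q)$ and $N_G(T)$ contain several two-point stabilizers, so the five stabilizers forming the pentagon must be chosen in general position. Concretely, I would show that one can choose $\omega_1,\dots,\omega_5$ so that, for each of the five edge-pairs above, the corresponding two-point stabilizers have distinct involutions and do not lie in a common conjugate of $N_G(T)$. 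Since each conjugate of $C_2\times\psl_2(q)$ contains at most $\tfrac12 q(q+1)$ two-point stabilizers and each conjugate of $N_G(T)$ only boundedly many, the number of ``bad'' ordered $5$-tuples of distinct points is $O(q^{11})$, which is negligible against the $\sim q^{15}$ tuples available; hence a valid configuration exists (and for the smallest case $q=27$ one can alternatively exhibit it directly with GAP \cite{GAP4}). Once the five points are fixed, Lemma \ref{stabilizer} delivers the required $5$-hole in $\Gamma(G)$.
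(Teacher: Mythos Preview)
Your approach is essentially the paper's: both apply Lemma \ref{stabilizer} to generators of two-point stabilizers in the $2$-transitive action of ${}^2G_2(q)$ on the $q^3+1$ points, and both identify the involution centralizer $C_2\times\psl_2(q)$ as the one maximal overgroup of $C_{q-1}$ that genuinely obstructs generation. The difference is only in how the five points are selected. The paper does this constructively via the block geometry: each involution $t$ fixes exactly $q+1$ points (its \emph{block}), two points lie in a unique block, and two two-point stabilizers lie in a common involution centralizer precisely when the four points lie in one block; so it suffices to pick $\omega_1,\omega_2,\omega_3$ freely, then $\omega_4\notin\Omega_{2,3}$ and $\omega_5\notin\Omega_{3,4}\cup\Omega_{1,2}$, which guarantees that no four of the five points share a block and works uniformly for all $q\geq 27$. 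Your counting argument reaches the same conclusion non-constructively (and in fact the crude estimate already succeeds at $q=27$, so the GAP fallback is not needed). One small slip: the split-torus normalizer $N_G(T)$ is not a maximal subgroup of ${}^2G_2(q)$; since the unique involution of $T$ is characteristic in $T$, one has $N_G(T)\leq C_G(z)$, so this extra case is absorbed by the involution-centralizer analysis and the error is harmless.
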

\begin{proof}
	The groups $G$ has a faithful 2-transitive action on an ovoid $\Omega$ in 7-dimensional orthogonal geometry. Moreover the following are up to conjugacy
	the maximal subgroups of $G$ (see for example \cite[Table 8.43]{max_simple}):
	\begin{enumerate}
		\item the stabilizer of $\omega \in \Omega$ (the Borel subgroup of order 					$q^3(q-1)$);
		\item the centralizer of an involution, which is isomorphic to $C_2\times\psl_2(q)$;
		\item the normalizer of a four-group, which is isomorphic to
		$(2^2 \times D_{(q+1)/4}):3;$
		\item $C_{q+\sqrt{3q}+1}: C_6$;
		\item $C_{q-\sqrt{3q}+1}: C_6$;
		\item ${^2G}_2(q_0)$, where $q=q_0^r$ and $r$ prime.
	\end{enumerate}
	The intersection of two different point-stabilizers is cyclic with order $q-1$. Moreover any involution $t$ in $G$ fixes precisely $q + 1$ points in $\Omega$, and the set of these $q+1$ elements is called the block of $t.$ Any two blocks can intersect in at most 1 point and any two points are pointwise fixed by a unique involution.
	Choose $\omega_1,\dots,\omega_5\in\Omega$ all distinct in the
	following way: $\omega_1$, $\omega_2$ and $\omega_3$ are  chosen randomly and let $\Omega_{i,j}$ be the unique block which contains $\omega_i$ and $\omega_j$. Since $|\Omega|=q^3+1$ and a block has cardinality $q+1$, it is possible to choose $\omega_4\in\Omega\setminus\Omega_{2,3}$ and $\omega_5\in\Omega\setminus(\Omega_{3,4}\cup\Omega_{1,2})$. Since the block containing two elements is unique, we have that four of these five elements never belong to the same block.
	Let $\omega_i,\omega_j,\omega_k,\omega_l$ be four of these five elements. Let $x$ a generator of $\stab_G(\omega_i)\cap\stab_G(\omega_j)$ and $y$ a generator of $\stab_G(\omega_l)\cap\stab_G(\omega_k)$ and consider $H:=\gen{x,y}$. The subgroup $H$ cannot be contained in maximal subgroups of type $(3)$, $(4)$, $(5)$ and $(6)$, since these maximal subgroups do not contain elements of order $q-1$. There are no elements in $G$ of order $q-1$ which fix three distinct elements on $\Omega$, so $H$ is not contained in maximal subgroups of type $(1)$. Therefore, if $H<G$ is proper, then $H\leq C_G(t)$ for a suitable involution $t$ of $G$. This occurs when $t$ is contained in the intersection of the four stabilizers of $\omega_i,\omega_j,\omega_k,\omega_l$ 
	but in this case 
	$\omega_i,\omega_j,\omega_k,\omega_l$ belong to the same block, against our  choice of $\omega_1,\dots,\omega_5$. So $H=G$ and we may conclude by applying Lemma \ref{stabilizer}.
\end{proof}
\subsection{Small groups}
The alternating group $A_5$ of degree 5 is the smallest 2-generated  finite group whose generating graph is not perfect. More precisely
the following holds.
\begin{thm}\label{mina5}
	Let $G$ be a 2-generated finite group, with $|G|\leq 60.$ Then $\Gamma(G)$ is perfect if and only if $G\neq A_5.$
\end{thm}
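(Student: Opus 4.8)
The plan is to establish the two implications separately. The \lq\lq only if\rq\rq\ direction is immediate from Theorem~\ref{alterni}: taking $n=5$ there shows that $\rg(A_5)$ contains an induced $5$-hole, so $\rg(A_5)$ is not perfect by the strong perfect graph theorem, and therefore $\rg(G)$ being perfect forces $G\neq A_5$.

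For the converse, let $G$ be $2$-generated with $|G|\leq 60$ and $G\neq A_5$. Since $A_5$ is the unique insoluble group of order at most $60$, the group $G$ is soluble, and by Lemma~\ref{qfra} I may replace $G$ by $G/\frat(G)$ and so assume $\frat(G)=1$. If $G$ is nilpotent it is the direct product of its Sylow subgroups, each of which is $2$-generated with trivial Frattini subgroup, hence elementary abelian of rank at most $2$; thus $|G|=\prod_i p_i^{a_i}$ with every $a_i\leq 2$, and since the smallest integer of this shape with at least five prime factors is $2^2\cdot 3^2\cdot 5=180>60$, the order $|G|$ is a product of at most four primes and $\rg(G)$ is perfect by Theorem~\ref{nilperf}. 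If $G$ is not nilpotent then $|G|$ is divisible by two or three distinct primes; when $|G|$ has the form $p_1p_2$, $p^2q$ or $p_1p_2p_3$ (distinct primes), $\rg(G)$ is perfect by Proposition~\ref{piqu} and the other results of Subsection~\ref{piccoli}, and when $G$ is dihedral $\rg(G)$ is perfect by Theorem~\ref{diedrale}, because any $n\leq 30$ is even or divisible by at most two primes. This leaves the orders $24$, $36$, $40$, $48$, $54$, $56$ together with the soluble non-dihedral groups of order $60$.

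For each of these I would go through the finite list of $2$-generated non-nilpotent groups with trivial Frattini subgroup of the given order and check in every case that $\rg(G)$ has neither an odd hole nor an odd antihole. The main tools are Lemmas~\ref{unicomax}, \ref{duemax}, \ref{aiaj} and \ref{c2c2}: an element contained in a unique maximal subgroup, and any vertex set contained in the union of two maximal subgroups, cannot support an odd hole or antihole; distinct vertices cannot generate the same cyclic subgroup; and a group with a quotient isomorphic to $C_2\times C_2$ has no $m$-antihole for $m\geq 7$. For most of the groups in the list these constraints already rule out odd holes and antiholes. The few that genuinely need individual attention — for example $S_4$ and $A_4\times C_2$ of order $24$, $S_3\times S_3$ and $C_3^2\rtimes C_4$ of order $36$, the Frobenius group $C_2^4\rtimes C_3$ and $S_4\times C_2$ of order $48$, and $C_5\times A_4$, $S_3\times D_5$ and $C_3\times(C_5\rtimes C_4)$ of order $60$ — are disposed of by a short direct argument (for $S_4$ one simply invokes Theorem~\ref{altesim}) or, equivalently, by a computation in GAP~\cite{GAP4}.

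The hard part is exactly this last step: the leftover groups do not belong to any of the uniform families handled earlier (nilpotent, dihedral, or order $p_1p_2$, $p^2q$, $p_1p_2p_3$), so each must be examined on its own to confirm that Lemmas~\ref{unicomax}--\ref{c2c2} leave no room for an odd hole or an odd antihole.
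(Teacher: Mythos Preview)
Your proposal is correct and follows essentially the same route as the paper: use Theorem~\ref{alterni} for $A_5$, Theorem~\ref{nilperf} for the nilpotent case, the results of Subsection~\ref{piccoli} for orders $pq$, $p^2q$, $pqr$, and then a case-by-case analysis of the remaining orders $24,36,40,48,54,56,60$ via Lemmas~\ref{unicomax}, \ref{aiaj}, \ref{duemax}, \ref{c2c2}. The paper does not pass to $G/\frat(G)$ or single out the dihedral groups as you do, and it actually carries out one representative case ($C_6\times D_5$) in full rather than deferring entirely to a computation, but the strategy is the same.
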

\begin{proof}
	By Theorem \ref{alterni}, we have only to prove that if $|G|\leq 60$ and
	$G\neq A_5,$ then $\Gamma(G)$ is perfect. By Theorem \ref{nilperf}, $C_{30}\times C_6$ is the smallest 2-generated finite nilpotent group whose generating graph is not perfect. So we may assume that $G$ is not nilpotent, and by the results in subsection \ref{piccoli} we may exclude $|G|\in \{pq, pqr, p^2q\}$ with $p, q, r$ different primes. Hence $|G|\in \{24,36,40,48,54,56,60\}.$ This requires a case by case analysis. As an example we consider
	$G \cong C_6 \times D_5$, which is the case that requires more attention.
	The other cases can be discussed with similar, but in general shorter, arguments. Let $m\geq5$ be odd. Set $\gen{c}=C_6$ and $\gen{\rho,\iota}=D_5$, with $\rho$ a rotation of order $5$ and $\iota$ a reflection. Since
	$C_2\times C_2$ is an epimorphic image of $G$, it follows from Lemma \ref{c2c2} that $\rg(G)$ does not contain $m$-antiholes with $m\geq7$, so we have to check only the non-existence of  $m$-holes. To prove this, we need the list of maximal subgroups of $G$:
	\begin{itemize}
		\item $M_1= \gen{c^2, \iota, \rho}$;
		\item $M_2= \gen{c^3, \iota, \rho}$;
		\item $M_3=  \gen{c,\rho}$;
		\item $M_4 =\gen{\rho,c\iota}$;
		\item $M_{5+\alpha}= \gen{c,\rho^\alpha\iota}$ with $\alpha \in  \{0,1,2,3,4\}.$
		
	\end{itemize}
	Suppose $(a_1,\dots,a_m)$ is a $m$-hole. Consider the two projections $\pi_1: G\to \langle c\rangle,$ $\pi_2: G \to \gen{\rho,\iota}.$ There exists $i \in \{1,\dots,m\},$ such that $\langle \pi_1(a_i)\rangle=\langle c\rangle.$ Otherwise $|\pi_1(a_j)|\in \{2,3\}$ for every $1\leq j \leq m,$ and, since $m$ is odd, there would exist two consecutive vertices $a_k$ and $a_{k+1}$ with $|\pi_1(a_k)|=|\pi_1(a_{k+1})|=t \in \{2,3\}$, and consequently $G=\gen{a_k,a_{k+1}}\leq \gen {c^{6/t}}\times D_5.$  So we may assume without loss of generality $\pi_1(a_1)=c.$ It must be $\pi_2(a_1)\neq 1$ (otherwise $a_1$ would be an isolated vertex of $\Gamma(G).$ Moreover $\pi_2(a_1) \notin \gen{\rho},$ otherwise
	$M_3$ would be the unique maximal subgroup of $G$ containing $a_1,$ against Lemma \ref{unicomax}. So we may assume $a_1=c\iota$. Let $3\leq j\leq m-1.$ Since $\gen {a_1, a_j}\neq G$ and $M_4$,   $M_5$ are the unique maximal subgroups of $G$ containing $a_1,$ it follows
	$a_j \in M_4\cup M_5$. Two consecutive vertices of $(a_1,\dots,a_m)$ generate $G$, so they cannot belong to the same maximal subgroup. Hence we can label the vertices of the $m$-hole so that $a_3 \in M_5$ (and consequently $a_4\in M_4)$. So $a_3=c^\alpha\iota$ with
	$\alpha \in \{0,1,2,3,4,5\}.$
	Moreover $\alpha\neq3$ (otherwise $\gen{a_3,a_4}\leq M_4$) and $\alpha \notin \{1,5\}$ (since, by Lemma  \ref{aiaj}, $\gen{a_3}\neq\gen{a_1}$), and so we have $\alpha=\pm2$.
	Notice that $M_1$ and $M_5$ are the unique maximal subgroups of $G$ containing $a_3,$ so $a_m \in M_1 \cup M_5.$ On the other hand, from $\gen{a_1, a_m}=G$ and $a_1\in M_5,$ it follows $a_m \notin M_5$ and therefore $a_m \in M_1.$
	Now let $a_2=c^x\rho^y\iota^z$, with $0\leq x \leq 5,$ $0 \leq y \leq 4$ and $0\leq z\leq 1.$
	We have  $x\notin \{0,2,4\}$, otherwise $\gen{a_2,a_3}\leq M_1.$ If $x\in
	\{1,5\}$, then $z=1$ (otherwise $a_2$ would have order 30 and consequently would be contained in a unique maximal subgroup), but this would imply $\gen{a_1,a_2}\leq M_4,$ a contradiction.  So it must be $x=3$. If $z=1$, then again $\gen{a_1,a_2}\leq M_4$, a contradiction, so $z=0$ and therefore $a_2=c^3\rho^y$ with $y\neq0$. In particular  $M_2$ and $M_3$ are the unique maximal subgroups of $G$ containing $a_2$, and,  since
	$a_m$ and $a_2$ are not adjacent, $a_m \in M_2\cup M_3$. We have already proved that $a_m\in M_1$ so
	$a_m \in (M_1\cap M_2) \cup (M_1\cap M_3).$ Since $M_1\cap M_3 \leq M_4$ and $a_1 \in M_4,$ if $a_m \in M_1\cap M_3,$ then $G=\langle a_1, a_m \rangle \leq M_4,$ a contradiction.
	So $a_m \in M_1\cap M_2=\langle \rho, \iota \rangle,$ and consequently we may assume $a_{m-1}=c\rho^s\iota^t$. We have $t=1$, otherwise $a_{m-1}$ is contained in a unique maximal subgroup. Then
	$
	\gen{a_2,a_{m-1}}=\gen{c\rho^s\iota,c^3\rho^y}=\gen{c\rho^s\iota,c^3,\rho}=\gen{c\iota,c^3,\rho}=\gen{c,\iota,\rho}=G,
	$
	a	contradiction.
\end{proof}

\section{Other forbidden graphs}
\begin{prop}
	Let $G$ be a non-trivial finite group. Then $G$ does not contain an induced graph isomorphic to $P_3$ if and only if either $G\cong C_2\times C_2$
	or $G\cong C_p$ for some prime $p.$
\end{prop}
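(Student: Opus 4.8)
The plan is to prove the two implications separately. For sufficiency I would describe the two graphs explicitly: for a prime $p$ any two distinct elements of $C_p$ generate $C_p$, so $\Gamma(C_p)=K_p$, which has no induced $P_3$; and in $C_2\times C_2$ the identity is isolated while the three involutions are pairwise adjacent, so $\Gamma(C_2\times C_2)$ is a triangle together with an isolated vertex, and any three of its vertices span either a triangle or at most one edge, hence no induced $P_3$.

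For necessity I would use the standard fact that a finite graph has no induced $P_3$ exactly when each of its connected components is a complete graph, and then split according to whether $G$ is cyclic. The substantial case is $G$ non-cyclic: suppose $\Gamma(G)$ is a disjoint union of cliques. If $v$ is a non-isolated vertex with a neighbour $w$, then $\gen{v,w}=G$, and $\gen{v^{-1},w}=\gen{v,w}=G$ shows that $v^{-1}$ is also a neighbour of $w$, so $v$ and $v^{-1}$ lie in a common clique; since $\gen{v,v^{-1}}=\gen{v}\neq G$ they are not adjacent, forcing $v=v^{-1}$, i.e. $v$ is an involution. Hence every generating pair of $G$ consists of involutions, so the $2$-generated group $G$ is generated by two involutions and is therefore dihedral, $G\cong D_n$ with $n\geq 2$. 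If $n\geq 3$ the rotation of order $n$ is non-isolated (it generates $D_n$ together with any reflection) but is not an involution, a contradiction; so $n=2$ and $G\cong C_2\times C_2$.

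For $G$ cyclic, write $G\cong C_n$ and suppose $n$ is not prime; I would exhibit an induced $P_3$. If $n$ has an odd prime divisor $q<n$, a generator $x$ of $C_n$ together with two distinct elements $y_1,y_2$ of order $q$ works: $x$ is adjacent to both, while $\gen{y_1,y_2}$ is the unique (proper) subgroup of order $q$, so $y_1$ and $y_2$ are non-adjacent. If instead $n=2^a$ with $a\geq 2$, then $\Gamma(C_n)$ is a complete split graph — the $\phi(2^a)$ generators form a clique and the $2^{a-1}\geq 2$ remaining elements form an independent set joined completely to it — so two non-generators and a generator induce a $P_3$. Therefore $n$ is prime and $G\cong C_p$.

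I do not expect a real obstacle: the single idea needed is the observation that $P_3$-freeness of $\Gamma(G)$ forces every non-isolated vertex to equal its own inverse, after which the non-cyclic case reduces at once to dihedral groups and then to $C_2\times C_2$, and the cyclic case is a direct check. (As in the rest of the paper, $G$ is taken to be $2$-generated; this is used only to pass from ``every generating pair consists of involutions'' to ``$G$ is generated by two involutions''.)
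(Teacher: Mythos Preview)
Your proof is correct and follows essentially the same line as the paper's. Both arguments rest on the same key observation: a generating pair $(a,b)$ with $a\neq a^{-1}$ (and $\gen{a}\neq G$) produces the induced path $(a,b,a^{-1})$, so in the non-cyclic case every non-isolated vertex must be an involution, forcing $G$ to be dihedral and hence $C_2\times C_2$; your explicit invocation of the ``$P_3$-free $\Leftrightarrow$ disjoint union of cliques'' characterisation and your alternative case split for cyclic $G$ (odd prime divisor $q<n$ versus $n=2^a$, rather than the paper's $n=rs$ with $(r,s)=1$ versus $n=p^t$) are only cosmetic variations.
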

\begin{proof}
	Suppose that $G$ satisfies the following property: $(*)$ there exist $a,
	b \in G$ such that $G=\langle a, b \rangle,$ $G\neq \langle a \rangle,$
	$G\neq \langle b \rangle$ and $a \neq a^{-1}.$
	Then $(a,b,a^{-1})$ is a three-vertex path in $\Gamma(G).$ Assume that $G=\langle a, b\rangle$ is not cyclic. If $G$ is not a dihedral group, then  $(|a|,|b|)\neq (2,2)$. If $G\cong D_n$ is a dihedral group of order $2n,$ then we may choose $a, b$ such that $(|a|,|b|)=(n,2).$
	So if $G$ is not cyclic, then either $G$ satisfies  $(*)$  or $G\cong C_2
	\times C_2.$ In this latter case, assume that $(x_1,x_2,x_3)$ is a three-vertex path in $\Gamma(G)$: then $x_i \neq 1$ for any $i\in \{1,2,3\},$ but then $\{x_1,x_2,x_3\}$ induces a complete graph $K_3.$ So we may assume that $G=\langle x \rangle \cong C_n.$ If $n=rs$ and $(r,s)=1$, then
	$G=\langle x^r, x^s \rangle$ and $(|x^r|,|x^s|)=(s,r) \neq (2,2)$ so $G$ satisfies  $(*)$. If $n=p^t$ with $p$ a prime and $t\geq 2,$ then
	$(1,x,x^p)$ is a  three-vertex path in $\Gamma(G).$ If $n=p$ is a prime
	and $x_1,x_2,x_3$ are three distinct elements of $G,$ then $\{x_1,x_2,x_3\}$ induces a complete graph $K_3.$
\end{proof}

\begin{lemma}\label{quattro}
	Let $p$ be a prime and assume that either $G$ is a cyclic $p$-group or $|G|=2p.$ Suppose that the subgraph of $\Gamma(G)$ induced by four
	distinct non-isolated vertices contains at least one edge. Then at least one of the four vertices is adjacent to all the others.
\end{lemma}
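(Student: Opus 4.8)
The plan is to prove the statement by a short case analysis on the structure of $G$, using in each case an explicit description of $\Gamma(G)$.

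\emph{The cyclic $p$-group case.} Here I would use that the subgroups of $G$ form a chain. Among the four vertices $v_1,v_2,v_3,v_4$ pick an edge, say $v_1\sim v_2$, so that $\langle v_1,v_2\rangle=G$. Since $\langle v_1\rangle$ and $\langle v_2\rangle$ are comparable, one of them already equals $G$; say $\langle v_1\rangle=G$. Then $\langle v_1,v_j\rangle\supseteq\langle v_1\rangle=G$ for $j=2,3,4$, so $v_1$ is adjacent to each of the other three vertices. (In this case the hypothesis that the vertices are non-isolated is not even needed.)

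\emph{The case $|G|=2p$.} If $p=2$, then either $G\cong C_4$, which is covered above, or $G\cong C_2\times C_2$, in which case $\Gamma(G)$ has only three non-isolated vertices --- the three involutions, which pairwise generate $G$ --- so there is nothing to prove. So assume $p$ is odd; then $G\cong C_{2p}$ or $G\cong D_p$. In either case every proper quotient of $G$ is cyclic, so by the result of Burness--Guralnick--Harper quoted in the Introduction the identity is the unique isolated vertex of $\Gamma(G)$; hence the four given vertices are all non-trivial. I would then record that $\Gamma(G)\setminus\{1\}$ is a complete split graph: there is a set $K$ of $p$ elements, each adjacent in $\Gamma(G)$ to every other non-trivial element of $G$, while the remaining $p-1$ non-trivial elements form an independent set $W$. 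For $G\cong D_p$ one takes $K$ to be the set of reflections and $W$ the set of non-trivial rotations: two distinct reflections generate $D_p$; a reflection and a non-trivial rotation generate $D_p$ (the rotation generates the index-$2$ cyclic subgroup, $p$ being prime); and two non-trivial rotations lie in that cyclic subgroup, hence are non-adjacent. For $G\cong C_{2p}$ one takes $K$ to be the $p-1$ generators of $G$ together with the unique involution $z$, and $W$ the $p-1$ elements of order $p$: a generator is adjacent to everything; $\langle z,u\rangle$ has order $\operatorname{lcm}(2,|u|)=2p$ for every non-trivial $u$; and the elements of order $p$ all lie in the unique subgroup of order $p$, hence are pairwise non-adjacent.

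Granting this description, the conclusion is immediate: among the four non-trivial vertices at least one must lie in $K$ --- otherwise all four would lie in the independent set $W$ and the induced subgraph would be edgeless, contrary to hypothesis --- and any vertex of $K$ is adjacent to the other three. The only steps requiring care are the routine adjacency computations inside $D_p$ and $C_{2p}$, and --- this is the real point --- the removal of the identity via the cited result: if the identity were retained as a vertex, the four elements consisting of the identity, the involution and two elements of order $p$ in $C_{2p}$ would be a counterexample, so discarding it is essential rather than cosmetic.
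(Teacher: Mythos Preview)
Your case analysis mirrors the paper's, and the cyclic $p$-group argument is correct. The flaw is in your treatment of $G\cong C_{2p}$ with $p$ odd. The Burness--Guralnick--Harper theorem (correctly read) says that every \emph{non-identity} element is non-isolated whenever all proper quotients are cyclic; it does not assert that $1$ itself is isolated, and in a cyclic group it is not --- the identity is adjacent to every generator of $G$. Hence your deduction ``the four given vertices are all non-trivial'' fails for $C_{2p}$, and the split description of $\Gamma(G)\setminus\{1\}$ no longer covers all cases. Indeed, your closing ``would-be counterexample'' \emph{is} a counterexample to the lemma as literally stated: in $C_{2p}$ the set $\{1,z,a,b\}$ with $|z|=2$ and $|a|=|b|=p$ consists of four distinct non-isolated vertices, the induced subgraph has exactly the edges $z$--$a$ and $z$--$b$, and no vertex is adjacent to the other three.

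The paper's own proof makes the same tacit jump (from ``not all of order $p$ and none of order $2p$'' to ``some $g_i$ has $|g_i|=2$'', silently excluding $|g_i|=1$), so the lemma really needs ``non-identity'' in place of ``non-isolated'' to hold. This does not affect the applications: in an induced $C_n$ with $n\ge4$ or an induced $2K_2$, any neighbour of $1$ would be a generator of $G$ and hence adjacent to all the chosen vertices, which is impossible, so $1$ can never appear among the relevant four vertices there.
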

\begin{proof}
	Assume that $X=\{g_1,g_2,g_3,g_4\}$ induces a non empty-edges subgraph
	of $\Gamma(G).$ If $G\cong C_{p^n}$ is cyclic of prime power order, then there exists $i$ with $|g_i|=p^n$ (otherwise all the elements of $X$ belong to the unique maximal subgroup of $G$). But then $g_i$ is adjacent to $g_j$ whenever $j\neq i$. Assume $|G|=2p$. If $X$  contains an element of order $2p$ then this element generates $G$ so it is adjacent to all the others. Moreover
	we cannot have $|g_j|=p$ for every $j\in \{1,\dots,4\}$, since all the
	elements of order $p$ belong to the same maximal subgroup. Thus there exists $g_i\in X$ with $|g_i|=2$, but again this implies that $g_i$ is adjacent to $g_j$ whenever $j\neq i$.
\end{proof}

A graph is chordal if it contains no induced cycle of length greater then
3. A graph is called split if its vertex set is the disjoint union of two
subsets $A$ and $B$ so that $A$ induces a complete graph and $B$ induces
an empty graph.

\begin{prop}Let $G$ be a 2-generated finite group. Then the following conditions are equivalent.
	\begin{enumerate}
		\item $\Gamma(G)$ is split.
		\item $\Gamma(G)$ is chordal.
		\item $\Gamma(G)$ is $C_4$-free.
		\item Either $G$ is a cyclic $p$-group or $|G|=2p$ for some prime $p.$
\end{enumerate}\end{prop}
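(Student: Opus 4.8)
The implications $(1)\Rightarrow(2)\Rightarrow(3)$ are pure graph theory: every split graph is chordal, and a chordal graph contains no induced cycle of length greater than $3$, hence in particular no induced $C_4$. So the content of the proposition lies in $(4)\Rightarrow(1)$ and $(3)\Rightarrow(4)$.

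For $(4)\Rightarrow(1)$ I would exhibit an explicit split partition $V(\rg(G))=A\sqcup B$, showing in fact that $\rg(G)$ is a complete split graph (a clique completely joined to an independent set) together with some isolated vertices. If $G\cong C_{p^n}$, take $A$ to be the set of generators and $B=\frat(G)$: two generators generate $G$, while two elements of $\frat(G)$ lie in a common maximal subgroup. If $G$ is cyclic of order $2p$, take $A$ = generators together with the unique involution and $B$ = identity together with the $p-1$ elements of order $p$. If $|G|=2p$ with $G$ non-cyclic and $p$ odd, then $G\cong D_p$; take $A$ = the $p$ reflections (two distinct reflections have a non-trivial rotation as their product, hence generate $D_p$) and $B$ = identity together with the non-trivial rotations. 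Finally $\rg(C_2\times C_2)$ is a triangle plus an isolated vertex, which is split. In every case $A$ is readily seen to be a clique and $B$ independent; alternatively Lemma~\ref{quattro} already yields the weaker fact that these graphs contain no induced $C_4$.

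For $(3)\Rightarrow(4)$ I would argue by contraposition: if $G$ is $2$-generated, not a cyclic $p$-group, and $|G|\ne 2p$, then $\rg(G)$ has an induced $C_4$. As in the proof of Lemma~\ref{qfra}, lifts of distinct vertices through the Frattini quotient preserve adjacency exactly, so an induced $C_4$ in $\rg(G/\frat(G))$ produces one in $\rg(G)$; this gives a reduction towards $\frat(G)=1$ in the non-cyclic analysis (with small residual cases checked directly). \emph{Cyclic case:} write $|G|=\ell_1^{k_1}\cdots\ell_t^{k_t}$ with $t\geq 2$, and for $x\in G$ put $S(x)=\{i:\ell_i^{k_i}\mid|x|\}$, so that $\langle x,y\rangle=G$ iff $S(x)\cup S(y)=\{1,\dots,t\}$. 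It suffices to find distinct proper subsets $S\ne S'$ with $S\cup S'=\{1,\dots,t\}$ whose fibres $\{x:S(x)=S\}$ and $\{x:S(x)=S'\}$ each have at least two elements: then, with $x_1,x_3$ in the first fibre and $x_2,x_4$ in the second, $(x_1,x_2,x_3,x_4)$ is an induced $4$-cycle. A nonempty fibre contains all $\varphi(\prod_{i\in S}\ell_i^{k_i})$ elements of order $\prod_{i\in S}\ell_i^{k_i}$, which is $\geq 2$ unless that product equals $2$; taking $S=\{1,2\}$, $S'=\{1,3,\dots,t\}$ for $t\geq 3$, and $S=\{1\}$, $S'=\{2\}$ for $t=2$, the only failure is $t=2$ with $\ell_1^{k_1}=2$ and $\ell_2^{k_2}$ prime, i.e.\ $|G|=2p$, which is excluded. \emph{$p$-group case:} the non-isolated vertices of $\rg(G)$ are the elements outside $\frat(G)$, two of them adjacent iff they project to distinct $1$-spaces of $G/\frat(G)\cong C_p\times C_p$; hence $\rg(G)$ minus its isolated vertices is complete $(p+1)$-partite with every part of size $(p-1)|\frat(G)|$, which is $\geq 2$ unless $G\cong C_2\times C_2$, and a complete multipartite graph with two parts of size $\geq 2$ has an induced $C_4$. \emph{General non-cyclic case:} after reduction assume $\frat(G)=1$. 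If $G$ is nilpotent it is a direct product of elementary abelian groups; a rank-$2$ factor $C_q^2$ with $q\geq 3$ gives a $C_4$ at once, and the remaining possibility $C_2^2\times C_m$ ($m>1$ odd) gives the explicit $C_4$ $\bigl((e_1,g),(e_2,g),(e_1,g'),(e_2,g')\bigr)$, with $e_1,e_2$ a basis of $C_2^2$ and $g\ne g'$ non-trivial in a factor $C_p$ of $C_m$. If $G$ is soluble but not nilpotent, pick a non-central minimal normal subgroup $N$, write $G=N\rtimes K$, and run the complement-counting of Lemma~\ref{semidiretto}: the set $\{(v,w)\in N^2:\langle av,bw\rangle=G\}$ misses only $|N|$ of the $|N|^2$ pairs, so a convexity estimate forces the associated bipartite graph to contain a $K_{2,2}$ as soon as $|N|\geq 4$, and such a $K_{2,2}$, lying in two distinct cosets of $N$, is an induced $C_4$; the cases $|N|\in\{2,3\}$ lead to $G\cong D_p$. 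If $G$ is insoluble, argue as in the earlier subsections, using the strong spread/transitivity properties of the almost-simple sections of $G$ (and \cite[Corollary 1.3]{Jo} for alternating and symmetric sections) to exhibit a $K_{2,2}$ of generating pairs.

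\textbf{Main obstacle.} The hard part is $(3)\Rightarrow(4)$ for non-cyclic groups that are not $p$-groups. Unlike the cograph property (Lemma~\ref{quozienti}), which passes to quotients, and unlike perfectness (Lemma~\ref{qfra}), which is detected by the Frattini quotient, $C_4$-freeness must be refuted by directly producing a $K_{2,2}$ of generating pairs; the argument genuinely bifurcates into a module-theoretic soluble regime (via Lemma~\ref{semidiretto}-type counting) and a simple-group-theoretic insoluble regime, and a short list of small groups must be checked by hand to confirm that, besides the cyclic $p$-groups, only the groups of order $2p$ survive.
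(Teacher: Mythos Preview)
Your treatment of $(4)\Rightarrow(1)$ via explicit split partitions is correct and in fact more concrete than the paper's route (which deduces it from Lemma~\ref{quattro} together with the Foldes--Hammer forbidden-subgraph characterisation of split graphs).

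The real divergence is in $(3)\Rightarrow(4)$, and here you have both missed a short argument and left a genuine gap. The paper's proof rests on a single observation: if $G=\langle a,b\rangle$ with $\langle a\rangle\ne G\ne\langle b\rangle$ and $|a|,|b|>2$, then $\{a,b,a^{-1},b^{-1}\}$ induces a $C_4$ in $\Gamma(G)$. Consequently, if $\Gamma(G)$ is $C_4$-free and $G$ is non-cyclic, then \emph{every} generating pair contains an involution; applying this to both $(a,b)$ and $(b,ab)$ forces $G$ to be generated by two involutions, hence dihedral, and for $D_n$ with $n$ composite and $p\mid n$ the set $\{a,b,ab^p,b^{-1}\}$ is an induced $C_4$. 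The cyclic case is handled by the same $\{a,b,a^{-1},b^{-1}\}$ trick applied to a coprime factorisation. No structural case-split (nilpotent / soluble / insoluble), no Frattini reduction, no complement-counting, and no appeal to properties of simple groups is needed.

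By contrast, your proposed argument is incomplete. In the soluble non-nilpotent branch, the assertion that ``the cases $|N|\in\{2,3\}$ lead to $G\cong D_p$'' is unjustified (there are many Frattini-free soluble $2$-generated groups with a minimal normal subgroup of order $2$ or $3$ that are not $D_p$), and the invocation of the counting from Lemma~\ref{semidiretto} tacitly assumes the non-isomorphism hypothesis on chief factors that makes the complement count exact. More seriously, the insoluble branch is not a proof: the earlier subsections of the paper produce $5$-holes in a handful of specific simple groups, not $C_4$'s in arbitrary insoluble $2$-generated groups, so ``argue as in the earlier subsections'' does not yield what you need. The $\{a,b,a^{-1},b^{-1}\}$ device removes all of this at a stroke.
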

\begin{proof} Clearly (2) implies (3).
	
	Assume that (3) holds. If there exist $a, b \in G$ so that $G=\langle a,b \rangle$, $\langle a \rangle \neq G$, $\langle b \rangle \neq G$, $|a| \neq 2,$ $|b| \neq 2$, then the subgraph of $\Gamma(G)$ induced by $\{a,b,a^{-1},b^{-1}\}$ is a four-vertex cycle. If $G$ is cyclic of order $n$, then we can find $a, b$ with the previous properties except when $n$ is a prime-power or $n=2p$ with $p$
	a prime. So we may assume that $G$ is non-cyclic and
	$G=\langle a, b\rangle$ with $|a|=2.$ Moreover either $b$ or $ab$ has order 2, otherwise $(b,ab)$ is a generating pair with the previous properties. Hence $G=\langle a, b \rangle=\langle a, ab\rangle$ can be generated by two involutions, so $G$ is isomorphic to a dihedral  group $D_{n}$ of order $2n$ and we may assume $|b|=n.$ If $n$ is not a prime and $p$ is a prime divisor of $n,$ then the subgroup of $\Gamma(G)$ induced
	by $\{a,b,ab^p,b^{-1}\}$ is a four-vertex cycle. 
	
	It follows from Lemma \ref{quattro} that (4) implies (2).
	
	It was shown in \cite{fh} that a graph is split if and only if it does not have an induced
	subgraph isomorphic to one of the three forbidden graphs, $C_4,$ $C_5$ or $2K_2.$ In particular (1) implies (3) and we may immediately deduce from Lemma \ref{quattro} that (4) implies (1).
\end{proof}

\end{document}